\tikzstyle{empty}=[circle,draw=black!80,thick]
\tikzstyle{emptyn}=[circle,draw=black!80,fill=white,scale=0.5] 
\tikzstyle{nero}=[circle,draw=black!80,fill=black!80,thick]
\newtheorem{teo}{Theorem}[section]
\newtheorem{corollary}[teo]{Corollary}
\newtheorem{proposition}[teo]{Proposition}
\newtheorem{lemma}[teo]{Lemma}
\theoremstyle{definition}
\newtheorem{definition}[teo]{Definition}
\newtheorem{example}[teo]{Example}
\newtheorem {Step}    {Step}
\newtheorem *{FStep}    {Final Step}
\newcommand{\fS}{{\mathfrak{S}}}
\newcommand{\nor}{\unlhd}
\DeclareMathOperator{\GL}{GL}
\DeclareMathOperator{\Aut}{Aut}
\DeclareMathOperator{\Gal}{Gal}
\def\syl#1#2{{\rm Syl}_#1(#2)}
\def\cent#1#2{{\bf C}_{#1}(#2)}
\def\ch#1{{\rm Char}(#1)}
\def\irr#1{{\rm Irr}(#1)}
\def\irrp#1#2{{\rm Irr}_{#2'}(#1)}
\def\norm#1#2{{\bf N}_{#1}(#2)}
\newcommand{\op}{\operatorname}
\newcommand{\Q}{\mathbb Q}
\title[]{Alperin-McKay natural correspondences in solvable and symmetric groups for the prime $p=2$}
\author{Eugenio Giannelli}
\address[E. Giannelli]{Trinity Hall, University of Cambridge, Trinity Lane, CB21TJ, UK}
\email{eg513@cam.ac.uk}
\author{John Murray}
\address[John Murray]{Department of Mathematics \& Statistics, National University of Ireland Maynooth,
Co. Kildare, Ireland}
\email{john.murray@maths.nuim.ie}
\author{Joan Tent}
\address[J. Tent]{Departament de Matem\`atiques, Universitat de Val\`encia, 46100 Burjassot, Val\`encia, Spain}
\email{joan.tent@uv.es}
\date{February 20, 2017}
\begin{document}
\thanks{The first author's research was funded by Trinity Hall, University of Cambridge and by Gunter Malle's ERC Advanced Grant 291512. The third author 
has been supported by 
MTM2014-53810-C2-01 of the Spanish MEyC and by the Prometeo/Generalitat Valenciana.}

\begin{abstract}
Let $G$ be a finite solvable or symmetric group and let $B$ be a $2$-block of $G$. We construct a canonical correspondence between the irreducible characters of height zero in $B$ and those in its Brauer first main correspondent.
For symmetric groups our bijection is compatible with restriction of characters. 
\end{abstract}

\maketitle
\thispagestyle{empty}

\section{Introduction}

Let $\irr G$ be the set of irreducible characters of a finite group $G$, and let $\op{Irr}_{p'}(G)$ be the subset of characters of $p'$-degree, for any prime $p$. The {\em McKay conjecture} (recently proved for $p=2$ in \cite{MS}) asserts that $|\op{Irr}_{p'}(G)|=|\op{Irr}_{p'}(\norm G P)|$ when $P$ is a Sylow $p$-subgroup of $G$. 
In \cite{brown} Isaacs defined a natural correspondence of characters for the McKay conjecture in solvable groups at the prime $p=2$. As one expects for natural correspondences, the bijections of characters in \cite{brown} 
preserve fields of values of characters. Actually, the hypothesis in \cite{brown} are more general, and apply to odd-order groups at any prime. However, natural McKay bijections do not always exist, even for solvable groups when $p\ne2$ (e.g. $G=\GL(2,3)$ has two non-real irreducible characters of degree 2, while all irreducible characters of 
the normalizer of a Sylow $3$-subgroup of $G$ are rational-valued, so no bijections preserving fields of values exist in this case).

Next recall that $\irr G$ is partitioned into $p$-blocks; we let $\irr B$ be the characters belonging to a given $p$-block $B$. The defect of $B$ is the maximum non-negative integer $d$ such that $|G|/p^d\chi(1)$ is an integer, for some $\chi\in\irr B$. So if $\chi\in\irr {B}$, the $p$-part of $|G|/p^d\chi(1)$ is $p^h$, where $h\geq0$ is the height of $\chi$. We use $\mathrm{Irr}_{0}(B)$ to denote the characters of height 0 in $\mathrm{Irr}(B)$. A defect group of $B$ is a certain $p$-subgroup $D$ of $G$ such that $|D|=p^d$. The defect groups of $B$ are determined up to $G$-conjugacy.

The \textit{Alperin-McKay conjecture}, first stated in \cite{Alp}, asserts that $|\mathrm{Irr}_{0}(B)|=|\mathrm{Irr}_{0}(b)|$, where $b\in {\rm Bl}(\norm G D)$ is the Brauer first main correspondent of $B$. T.~Okuyama and M.~Wajima proved the Alperin-McKay conjecture for $p$-solvable groups in \cite{OW}. However, as with the McKay conjecture, it is generally not possible to define natural correspondences in the context of the Alperin-McKay conjecture, even for solvable groups. 


In this article we take $p=2$. Our first main theorem extends Isaacs results \cite{brown}
to
establish a natural Alperin-McKay bijection for solvable groups.

\begin{teo}\label{AMK}
Let $G$ be a finite solvable group, let $B$ be a $2$-block of\/ $G$ which has defect group $D$, and let $b$ be the $2$-block of\/ $\norm G D$ which is the Brauer correspondent of\/ $B$. Then there exists a natural correpondence between the height-zero irreducible characters in $B$ and  the height-zero irreducible characters in $b$. 
\end{teo}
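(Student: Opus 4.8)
The plan is to build the Alperin–McKay bijection by combining Isaacs' natural McKay correspondence for solvable groups at $p=2$ with the block-theoretic machinery of the Clifford/Fong–Reynolds reduction and an inductive argument on $|G|$. The key conceptual point is that for $p=2$ and solvable groups, Isaacs' correspondence $\irr{p'}(G)\to\irr{p'}(\norm{G}{P})$ is canonical and compatible with central characters and fields of values, so the task reduces to upgrading it from the full Sylow normalizer setting to the block-by-block setting demanded by Alperin–McKay. I would first fix a defect group $D$ of $B$ and set $N=\norm{G}{D}$, aiming to construct a bijection $\irrh{B}\to\irrh{b}$ that is natural in the sense of being determined by the group-theoretic data and compatible with the known McKay correspondence.

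First I would set up a standard reduction via Clifford theory. Let $Z$ be a suitable normal subgroup of $G$ (for instance a minimal normal subgroup, which in the solvable case is elementary abelian), and analyze how $B$ and its height-zero characters behave under restriction to $Z$ and induction. Using the Fong–Reynolds correspondence, I would reduce to the case where $B$ covers a single $G$-orbit of blocks of $Z$, and then pass to the stabilizer, so that by induction on $|G:N|$ or $|G|$ the bijection is already available on a proper section. The goal of this step is to isolate the "primitive" case where no nontrivial reduction is possible, while ensuring at each stage that the partial bijections glue together canonically — this is where compatibility with the central-character and field-of-values data from Isaacs' theorem does the bookkeeping that guarantees naturality rather than mere existence.

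Next I would handle the core case by relating height-zero characters of $B$ directly to $p'$-degree characters of an appropriate subgroup. The defect group $D$ being a Sylow $2$-subgroup of a relevant subgroup (obtained after the Fong reduction restricts attention to a subsection where $D$ is full defect), the height-zero characters of $B$ correspond to $p'$-degree characters there, and Isaacs' natural McKay bijection applies to produce a canonical map into $\irr{p'}$ of the local normalizer, which one then identifies with $\irrh{b}$. I would verify that the Brauer correspondent $b$ matches up with the block containing the image characters by tracking defect groups and Brauer homomorphisms through the reduction, using that $\norm{G}{D}$ controls the local structure and that for $p=2$ solvable groups the relevant local blocks have abelian-normalizer behaviour compatible with Isaacs' construction.

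The main obstacle, I expect, is proving genuine \emph{naturality} — that the bijection is canonical and not merely a counting equality — throughout the inductive and Clifford-theoretic reductions. The difficulty is that the Fong–Reynolds and Clifford correspondences introduce choices (of orbit representatives, of extensions, of stabilizers), and one must show the final bijection is independent of these choices and behaves functorially under conjugation and under the central-character/field-of-values invariants. Concretely, the hard part will be checking that Isaacs' $B_\pi$-theory correspondence, which is intrinsically tied to the $p'$-degree setting, survives the passage to a single block and continues to match the Brauer first main correspondent's height-zero characters in a choice-free way. I would lean heavily on the $p=2$ hypothesis here, since Isaacs' correspondence for solvable groups at $p=2$ enjoys strong canonicity and field-preservation properties that fail for odd primes, and these are exactly what forces the block-level bijection to be natural.
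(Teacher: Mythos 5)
Your overall skeleton -- induction on $|G|$, a Fong--Reynolds reduction to an inertia subgroup, and then an appeal to Isaacs' natural McKay correspondence in the invariant case -- is the same as the paper's. But there is a genuine gap at the exact point you wave at with ``which one then identifies with $\irrh{b}$.'' After the reduction one may assume $\theta\in\irr{M}$ is $G$-invariant for $M=O_{2'}(G)$, so that $D\in\syl 2G$ and $\irrh{B}=\irrp{G\mid\theta}{2}$ by Fong's theorem; the point you are missing is that the Brauer correspondent $b$ then satisfies $\irrh{b}=\irrp{\norm GD\mid\theta^*}{2}$, where $\theta^*\in\irr{\cent MD}$ is the \emph{Glauberman correspondent} of $\theta$ for the action of $D$ on $M$. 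So the heart of the proof is not ``apply Isaacs' McKay bijection and match up blocks by tracking Brauer homomorphisms''; it is a new compatibility theorem asserting that Isaacs' correspondence carries characters lying over $\theta$ precisely to characters lying over $\theta^*$. The paper proves this (its Lemma~2.2 and Theorem~2.4) by reworking Isaacs' construction step by step -- Clifford correspondence, the fully-ramified reduction, and the Theorem~9.1 character $\Psi$ -- and checking at each stage, via Wolf's Corollary~3.3, that the unique $P$-invariant constituent of odd multiplicity tracks the Glauberman map. Nothing in your proposal supplies this, and without it the claimed bijection does not land in $\irrh{b}$.

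Two smaller points. First, your reduction subgroup should be $M=O_{2'}(G)$, not a minimal normal subgroup: a minimal normal subgroup of a solvable group may be a $2$-group, in which case the Fong--Reynolds step does not put you in the situation where $D$ becomes a full Sylow $2$-subgroup, which is what makes Isaacs' $\irrp{\cdot}{2}$-machinery applicable. Second, you correctly flag choice-independence as the main obstacle, but you offer no mechanism; the paper's mechanism is to prove that the Theorem~9.1 correspondence of Isaacs is equivariant for Galois action and for group automorphisms stabilizing $K$ and $L$ (its Proposition~2.3), which is what allows the locally defined maps $F_\varphi$ and $f_\xi$ to be glued into a single well-defined, choice-free bijection. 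That equivariance computation (on the bilinear form defining $\Psi$) is a concrete step your plan would still need.
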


As a consequence of this theorem, if $b$ is the Brauer correspondent of a $2$-block $B$ of
a solvable group $G$, then the fields of values of the height-zero irreducible characters in
$B$ are the same as the fields of values of the height-zero irreducible characters in $b$.



\medskip

In general, it is particularly rare to find a natural correspondence that is compatible with restriction of characters, in the context of the Alperin-McKay conjecture.
Surprisingly this happens when $G=\fS_n$, the symmetric group on $n$ letters. 
The second main result of the article can be stated as follows. 

\begin{teo}\label{Thm:main}
Let $B$ be a $2$-block of\/ $\fS_n$ with defect group $D$, and let $b$ be the $2$-block of\/ $\norm {\fS_n} D$ which is the Brauer correspondent of\/ $B$. Then there exists a natural correpondence between the height-zero irreducible characters in $B$ and  the height-zero irreducible characters in $b$. If $\alpha\in\op{Irr}_0(B)$ corresponds to $\beta\in\op{Irr}_0(b)$, then $[\alpha{\downarrow_{\norm {\fS_{n}} D}},\beta]\ne0$.
\end{teo}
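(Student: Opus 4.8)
The plan is to pin down both sides of the desired bijection explicitly, and then to reduce the general case to the case of empty core, where the statement is already known.

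First I would determine $N:=\norm{\fS_n}{D}$. Let $\gamma$ be the $2$-core of $B$ and $w$ its weight, so that $n=|\gamma|+2w$ and $D$ may be taken to be a Sylow $2$-subgroup of the Young subgroup $\fS_{2w}\le\fS_n$ moving only the last $2w$ points. The feature that makes $p=2$ special is that a Sylow $2$-subgroup of a symmetric group is self-normalizing: since the binary digits of $2w$ are pairwise distinct, the $D$-orbits on $\{1,\dots,n\}$ have pairwise distinct sizes, so any element normalizing $D$ fixes each orbit setwise, and the claim reduces to the classical fact that $\syl2{\fS_{2^a}}$ is self-normalizing. Hence $\norm{\fS_{2w}}{D}=D$ and $N=\fS_{|\gamma|}\times D$. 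Because $\gamma$ is a $2$-core, $\chi^\gamma$ is a defect-zero character of $\fS_{|\gamma|}$, so its Brauer correspondent is $b=\{\chi^\gamma\}\otimes\irr D$; a one-line height computation (the defect of $b$ equals $\log_2|D|$ while $v_2(\chi^\gamma(1))$ is the full $2$-part of $|\fS_{|\gamma|}|$) shows that $\chi^\gamma\otimes\theta$ has height zero exactly when $\theta(1)=1$. Thus $\irrh b=\{\chi^\gamma\otimes\theta:\theta\in\op{Lin}(D)\}$, and the theorem becomes the assertion that there is a canonical, restriction-compatible bijection $\irrh B\to\op{Lin}(D)$.

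Next I would describe $\irrh B$ combinatorially through the $2$-quotient. Using Macdonald's formula for the $2$-adic valuation of $\chi^\lambda(1)$ read off the $2$-core tower, the height of $\chi^\lambda$ in $B$ depends only on the $2$-quotient $(\lambda^0,\lambda^1)$ of $\lambda$ and coincides with the height, in the principal block $B_0$ of $\fS_{2w}$, of the character $\chi^\mu$, where $\mu$ is the unique partition of $2w$ with empty $2$-core and $2$-quotient $(\lambda^0,\lambda^1)$. Since $2w$ is even, every odd-degree character of $\fS_{2w}$ automatically has empty $2$-core and hence lies in $B_0$; so $\chi^\lambda\in\irrh B$ if and only if $\chi^\mu\in\irrp{\fS_{2w}}2$, and the assignment $\chi^\lambda\mapsto\chi^\mu$ (fixing the core, transporting the quotient) is a bijection $\irrh B\to\irrp{\fS_{2w}}2=\irrh{B_0}$. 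The case $\gamma=\emptyset$ of the theorem is exactly the known restriction-compatible McKay correspondence at the prime $2$ for $\fS_{2w}$, which I take as given; composing the quotient bijection with it yields a canonical map $\chi^\lambda\mapsto\chi^\gamma\otimes(\chi^\mu)^\ast$, where $\chi^\mu\mapsto(\chi^\mu)^\ast\in\op{Lin}(D)$ denotes that bijection. Naturality is inherited from the two intrinsic ingredients.

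It remains to establish the restriction statement $[\alpha{\downarrow_N},\beta]\ne0$ for $\alpha=\chi^\lambda$ and $\beta=\chi^\gamma\otimes(\chi^\mu)^\ast$. Restricting in two stages through $\fS_{|\gamma|}\times\fS_{2w}$, and using that the $\chi^\gamma$-factor is untouched by the final restriction to $\fS_{|\gamma|}\times D$, one gets $[\chi^\lambda{\downarrow_N},\chi^\gamma\otimes\theta]=\sum_{\nu\vdash 2w}c^\lambda_{\gamma\nu}\,[\chi^\nu{\downarrow_D},\theta]$, where $c^\lambda_{\gamma\nu}$ is a Littlewood--Richardson coefficient. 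For $\theta=(\chi^\mu)^\ast$ the term $\nu=\mu$ equals $c^\lambda_{\gamma\mu}\,[\chi^\mu{\downarrow_D},(\chi^\mu)^\ast]$, and the second factor is odd by the known $\gamma=\emptyset$ case. The crux, and the step I expect to be the main obstacle, is the parity statement that $c^\lambda_{\gamma\mu}$ is odd whenever $\lambda$ has $2$-core $\gamma$ and the same $2$-quotient as $\mu$; this makes the $\nu=\mu$ term odd, and then a parallel parity analysis of the remaining summands (again governed by the restriction-compatible McKay data on $\fS_{2w}$) forces the whole multiplicity to be odd, hence nonzero. I expect the parity of $c^\lambda_{\gamma\mu}$ to require the most care: the natural route is an induction on $w$ that peels off one $2$-hook (equivalently one binary digit of $w$) at a time, controlling the Littlewood--Richardson coefficients modulo $2$ through the core/quotient dictionary and the recursive wreath-product structure of $D=\syl2{\fS_{2w}}$, and verifying that the relevant induction products are multiplicity-free modulo $2$.
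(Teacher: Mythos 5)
Your setup is correct ($N=\fS_{|\gamma|}\times D$, $\irrh b=\{\chi^\gamma\otimes\theta:\theta\in\op{Lin}(D)\}$), and your construction of the bijection via the $2$-quotient (transport $\lambda$ to the partition $\mu$ of $2w$ with empty core and the same quotient, then apply the known principal-block correspondence of \cite{GKNT}) is a legitimate alternative to the route taken in the paper, which instead characterises $\irrh B$ by the existence of a unique chain of rim-hooks of sizes $2^{w_1}>\dots>2^{w_t}$ down to the core (Lemma \ref{L:MacdonaldOllson1}) and defines $\chi^\lambda\mapsto\phi^{h_1}\times\dots\times\phi^{h_t}\times\chi^\kappa$ from the resulting hook sequence. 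Note these two bijections do not coincide in general, so yours is genuinely a different map.

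The restriction statement is where your argument breaks. The step you yourself flag as the crux --- that $c^\lambda_{\gamma\mu}$ is odd whenever $\lambda$ has $2$-core $\gamma$ and the same $2$-quotient as $\mu$ --- is false: this coefficient can vanish. Take $\gamma=(2,1)$, $w=3$, $n=9$ and $\lambda=(4,3,2)$, a height-zero partition of $B((2,1),3)$ (it appears in the example in Section \ref{Sec:bijection}). Its $2$-quotient is $\{(1^3),\emptyset\}$, so $\mu$ is $(2,1^4)$ or $(1^6)$ depending on the runner convention. A direct check of the Littlewood--Richardson rule on the skew shape $[(4,3,2)\backslash(2,1)]$ shows that the only $\nu$ with $c^{(4,3,2)}_{(2,1),\nu}\ne0$ are $(4,2),(3,3),(4,1^2),(3,2,1),(2^3)$; in particular $c^{(4,3,2)}_{(2,1),(2,1^4)}=c^{(4,3,2)}_{(2,1),(1^6)}=0$. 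So the single term you isolate in $\sum_\nu c^\lambda_{\gamma\nu}[\chi^\nu{\downarrow_D},\theta]$ contributes nothing, and the subsequent parity analysis has no base to stand on. (Incidentally, since all summands are non-negative you never needed oddness of the total sum, only non-vanishing of one term --- but the term you chose is the wrong one.) The paper avoids this entirely: it peels off the largest $2$-power hook $h_1$ using \cite[Lemma 4.1]{GKNT} to exhibit $\chi^{h_1}\times\chi^{\lambda-h_1}$ as an actual constituent of $\chi^\lambda{\downarrow_{\fS_{2^{w_1}}\times\fS_{n-2^{w_1}}}}$, applies \cite[Theorem 1.1]{G} to $\chi^{h_1}{\downarrow_{P_{2^{w_1}}}}$, and inducts on $t$; non-vanishing then follows because the target character is reached through a chain of genuinely occurring constituents. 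To rescue your approach you would have to either prove restriction-compatibility of your (different) bijection by identifying which $\nu$ actually carry the relevant linear character of $D$ --- which is essentially the content of the paper's hook-sequence analysis --- or replace your $\mu$ by the hook-sequence correspondent.
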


Theorem \ref{Thm:main} is a generalization of \cite[Theorem 4.3]{GKNT} where, building on \cite{G}, it was possible to explicitly construct a canonical correspondence of characters, compatible with restriction, in the context of the McKay conjecture for $\fS_n$. 

The last main result of the paper is Theorem \ref{theo:restriction}. There we study in more details the restriction to $\norm {\fS_{n}} D$ of any $\chi\in\mathrm{Irr}(B)$. In particular we show that for every $\gamma\in\mathrm{Irr}(B)$ there exists $\beta\in\mathrm{Irr}_0(b)$ such that $\beta$ is a constituent of $\gamma{{\downarrow_{\norm {\fS_{n}} D}}}$.


\section{Solvable groups}\label{secsolv}

We first recall the Glauberman correspondence. Suppose that $P$ and $G$ are finite groups such that $P$ acts on $G$ and $(|P|,|G|)=1$. In particular we may form the semi-direct product $G\rtimes P$. Glauberman's Lemma 13.8 of \cite{isaacs} asserts that if $X$ is a $(G\rtimes P)$-set, such that $G$ acts transitively on $X$, then $P$ fixes an element of $X$. One useful consequence of this is that if $N$ is a $P$-invariant normal subgroup of $G$ then $\cent {G/N} P=\cent G PN/N$.

Now suppose that $P$ is solvable. Let ${\rm Irr}_P(G)$ denote the set of $P$-invariant irreducible characters of $G$.
The Glauberman correspondence is a uniquely defined bijection
$$
\, {}^*\, :{\rm Irr}_P(G)\longrightarrow \irr {\cent G P}.
$$
Our notation $\, ^*\, $ supresses the dependence of this map on the action of $P$ on $G$, as it should be clear from the context. In the key case when $P$ is a $p$-group, we need the following property of Glauberman's map:

\begin{lemma}\label{glauberman}
Let $P$ be a finite $p$-group which acts coprimely on a finite group $G$ and
let $V$ be a $P$-invariant subgroup of $G$ which contains $\cent G P$. If $\chi\in {\rm Irr}_P(G)$, 
there is a unique $\psi\in{\rm Irr}_P(V)$ such that $p\nmid[\chi{\downarrow_V}, \psi]$.
Furthermore, $\chi^*=\psi^*$.
\end{lemma}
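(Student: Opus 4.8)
The plan is to deduce everything from the standard ``mod-$p$'' characterization of the Glauberman correspondence: if a $p$-group $P$ acts coprimely on a finite group $H$ with $C=\cent H P$, then for $\theta\in{\rm Irr}_P(H)$ the correspondent $\theta^*$ is the unique irreducible constituent of $\theta{\downarrow_C}$ whose multiplicity is not divisible by $p$ (see \cite{isaacs}). First I would record that, since $\cent G P\subseteq V$, we have $\cent V P=V\cap\cent G P=\cent G P=:C$, so $P$ acts coprimely on $V$ as well and both Glauberman maps $\,{}^*$ (for $G$ and for $V$) take values in $\irr C$. Thus it suffices to single out, among the $P$-invariant constituents of $\chi{\downarrow_V}$, the unique one whose Glauberman correspondent is $\chi^*$, and to show that it is the unique $P$-invariant constituent of multiplicity prime to $p$.

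Next I would analyse the double restriction $\chi{\downarrow_C}=(\chi{\downarrow_V}){\downarrow_C}$. Writing $\chi{\downarrow_V}=\sum_{\psi\in\irr V}m_\psi\,\psi$ with $m_\psi=[\chi{\downarrow_V},\psi]$, the group $P$ permutes $\irr V$ and, because $\chi$ is $P$-invariant, preserves the multiplicities: $m_{\psi^x}=m_\psi$ for $x\in P$. Moreover $P$ fixes $C$ pointwise, so it acts trivially on $\irr C$, and for $\psi$ and $\psi^x$ in a common $P$-orbit one has $\psi^x{\downarrow_C}=\psi{\downarrow_C}$. Since every non-trivial $P$-orbit on $\irr V$ has size a power of $p$ exceeding $1$, grouping the constituents by $P$-orbit and reducing modulo $p$ kills all non-$P$-invariant terms, giving
$$
\chi{\downarrow_C}\equiv\sum_{\psi\in{\rm Irr}_P(V)}m_\psi\,\psi{\downarrow_C}\pmod p .
$$

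The decisive step is then to reindex this congruence by the Glauberman bijection $\,{}^*\colon{\rm Irr}_P(V)\to\irr C$. For $\rho\in\irr C$ let $\psi_\rho\in{\rm Irr}_P(V)$ be its unique correspondent; by the characterization above, $\rho$ is the only constituent of $\psi_\rho{\downarrow_C}$ of multiplicity prime to $p$, so $\psi_\rho{\downarrow_C}\equiv\varepsilon_\rho\,\rho\pmod p$ for an integer $\varepsilon_\rho$ coprime to $p$. Substituting yields $\chi{\downarrow_C}\equiv\sum_{\rho\in\irr C}\varepsilon_\rho m_{\psi_\rho}\,\rho\pmod p$, while the same characterization applied to $\chi$ in $G$ gives $\chi{\downarrow_C}\equiv a\,\chi^*\pmod p$ with $p\nmid a$. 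Comparing the coefficient of each $\rho\in\irr C$ modulo $p$, and using that each $\varepsilon_\rho$ is a unit, I conclude $m_{\psi_\rho}\equiv0\pmod p$ for $\rho\ne\chi^*$ and $m_{\psi_{\chi^*}}\not\equiv0\pmod p$. Hence $\psi:=\psi_{\chi^*}$ is the unique member of ${\rm Irr}_P(V)$ with $p\nmid[\chi{\downarrow_V},\psi]$, and by construction $\psi^*=\chi^*$, as required.

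The only genuinely hard input is the mod-$p$ characterization of the Glauberman correspondence invoked twice above; granting it, the argument is a comparison of multiplicities modulo $p$, whose one subtlety is the orbit-counting that discards the non-$P$-invariant constituents and the reindexing by the bijection $\,{}^*$ that converts the congruence into a coefficientwise identity in the span of $\irr C$ over $\mathbb Z/p$.
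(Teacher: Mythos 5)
Your proof is correct, but it takes a genuinely different route from the paper: the paper's entire proof is a one-line citation, ``This is Corollary 3.3 of \cite{wolf}'', whereas you rederive that corollary from scratch. Your only external input is the classical mod-$p$ characterization of the Glauberman correspondence in the absolute case $V=\cent GP$ (for a $p$-group $P$ acting coprimely, $\theta^*$ is the unique irreducible constituent of $\theta{\downarrow_{\cent GP}}$ with multiplicity prime to $p$; this is in Chapter 13 of \cite{isaacs}), and you apply it only to the full centralizer --- once inside $G$ and once inside $V$ --- so there is no circularity with the relative statement being proved. The reduction itself is sound: $\cent VP=\cent GP$ since $\cent GP\subseteq V$; the orbit argument correctly kills the non-$P$-invariant constituents of $\chi{\downarrow_V}$ modulo $p$ (equal multiplicities and equal restrictions to $\cent GP$ within a $P$-orbit of $p$-power size $>1$); and reindexing ${\rm Irr}_P(V)$ by the bijection $\,{}^*:{\rm Irr}_P(V)\to\irr{\cent GP}$ turns the congruence into a coefficientwise identity over $\mathbb{Z}/p$, since $\irr{\cent GP}$ is $\mathbb{Z}$-linearly independent. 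Comparing coefficients then yields both the existence and uniqueness of $\psi\in{\rm Irr}_P(V)$ with $p\nmid[\chi{\downarrow_V},\psi]$ and the compatibility $\psi^*=\chi^*$ simultaneously, exactly as the lemma asserts (note the uniqueness claimed, and proved, is among $P$-invariant characters, which is what the statement requires). What each approach buys: the paper's citation is economical and treats the lemma as the black box it is for the rest of Section \ref{secsolv}; your argument makes the paper self-contained at the modest cost of invoking the standard textbook characterization, and it exhibits Wolf's intermediate-subgroup refinement as a purely formal consequence of the absolute case --- essentially the same mechanism Wolf exploits, but packaged as a short multiplicity comparison rather than as part of a general theory of character correspondences.
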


\begin{proof}
This is Corollary $3.3$ of \cite{wolf}.
\end{proof}

The reader is referred to Chapter 13 of \cite{isaacs} for a more thorough discussion of coprime actions and the Glauberman correspondence.

Now we discuss our plan of attack for the proof of Theorem \ref{AMK}, which consists first in reducing the general situation to the maximal defect case. Let $G$ be a finite solvable group and let $B$ be a $2$-block of $G$. We use induction on the order of $G$, so assume Theorem \ref{AMK} holds for any solvable group of order strictly smaller than $|G|$. 
Write $M=O_{2'}(G)$, and note that (by the covering theory of blocks) there exists $\theta\in\irr M$, unique up to $G$-conjugacy, such that $\irr B\subseteq \irr {G\, |\, \theta}$. Let $T=I_G(\theta)$ be the stabilizer of $\theta$ in $G$. By Fong-Reynolds Theorem 9.14 of \cite{nav}, there exists a unique block $B_0$ of $T$ whose irreducible characters are the Clifford correspondents (with respect to $\theta$) of the irreducible characters in $B$. Furthermore, this correspondence is height-preserving. So induction from $T$ to $G$ establishes a  bijection from $\op{Irr}_0(B_0)$ to $\op{Irr}_0(B)$, and a defect group of $B_0$ is a defect group of $B$. In particular we can assume that $D\subseteq T$.

Let $\theta^*\in\irr {\cent M D}$ be the Glauberman correspondent of $\theta$ with respect to the action of $D$ on $M$. Note that $\cent M D=N\cap M$, where $N=\norm G D$. By uniqueness in Glauberman's correspondence, $N\cap T$ is the stabilizer of $\theta^*$ in
$N$. Let $b_0\in\op{Bl}(N\cap T)$ be the Brauer first main correspondent of $B_0$. Then $b$ lies over $\theta^*$, and
the irreducible characters lying in $b_0$ are precisely the Clifford correspondents (over $\theta^*$) of the irreducible characters lying in $b$ (see \cite{slattery} for details). As before, induction defines a height-preserving bijection from $\irr {b_0}$ into $\irr b$.

Suppose that $T<G$. Our inductive hypothesis yields a natural correspondence from the set of height-zero characters in $B_0$ into the set of height-zero characters in $b_0$. In this case, induction of characters respectively from $T$ into $G$, and from $N\cap T$ into $N$, establishes the desired bijection between the height-zero characters in $B$ and the height-zero characters in $b$. So we may assume that $T=G$. 

By the previous paragraphs, we may suppose that $\theta$ is $G$-invariant. So $D\in\syl 2 G$ and $\irr B=\irr{G\, |\, \theta}$, by Fong's Theorem 10.20 of \cite{nav}. Since $\irr{b}=\irr{N\, |\, \theta^*}$ in this case, to complete the proof we need to prove a certain compatibility condition between the correspondence defined by Isaacs in \cite{brown} for the McKay conjecture
in solvable groups and the prime $p=2$, and Glauberman correspondence (see Theorem \ref{main} below). Working towards this, we begin with a bijection of characters defined by Isaacs in Theorem 10.6(ii) of \cite{brown}. What we really need to show is that this bijection has the property stated.
We note that this property may also be established using the work of Wolf \cite{wolf}, and that a related question was treated in \cite{lewis}. 

\begin{lemma}\label{bottom}
Let $G$ be a finite group, let $P\in\syl 2 G$, let $K,L\unlhd G$ with $L\subseteq K$ and let $\xi\in{\rm Irr}_P(L)$. Assume in addition that: 
\begin{enumerate}
\item $|K|$ is odd;
\item $K/L$ is abelian;
\item $G=\norm G P K$ and $\cent K P\subseteq L$;
\end{enumerate}
Set $H=\norm G P L$. Then there is a choice-free one-to-one correspondence:
$$
f_\xi\, :\irrp{G\, |\, \xi}{2}\longrightarrow \irrp{H|\, \xi}{2}\, . 
$$
This character correspondence satisfies the following property: let $V$ be a $P$-invariant odd-order subgroup of\/ $G$ which contains $K$ and
let $\varphi\in{\rm Irr}_P(V\mid\xi)$. Suppose that $\nu\in\irr {V\cap H\, |\, \xi}$ is the unique $P$-fixed character with $\varphi^*=\nu^*$. Then $f_\xi $ restricts to a one-to-one correspondence:
$$
f_\xi\, :\irrp{\, G|\, \varphi}{2}\longrightarrow \irrp{H |\, \nu}{2}\, .
$$
\end{lemma}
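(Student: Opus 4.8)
The existence of the correspondence $f_\xi\colon\irrp{G\,|\,\xi}{2}\to\irrp{H\,|\,\xi}{2}$ is furnished by Theorem 10.6(ii) of \cite{brown}, so the whole content to be proved is the displayed restriction property. Since $f_\xi$ is already a bijection between the larger sets, and since $\irrp{G\,|\,\varphi}{2}\subseteq\irrp{G\,|\,\xi}{2}$ and $\irrp{H\,|\,\nu}{2}\subseteq\irrp{H\,|\,\xi}{2}$ (because both $\varphi$ and $\nu$ lie over $\xi$), it suffices to prove the single biconditional
$$
\chi\in\irrp{G\,|\,\varphi}{2}\ \Longleftrightarrow\ f_\xi(\chi)\in\irrp{H\,|\,\nu}{2}\qquad(\chi\in\irrp{G\,|\,\xi}{2}),
$$
after which the asserted restriction follows formally. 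First I would record the arithmetic forced by the hypotheses. As $P\in\syl 2 G$ acts coprimely on the odd-order group $V$, Dedekind's law gives $V\cap H=\cent V P L$ and, likewise, $K\cap H=L$; hence $G=HK$ and $[G:H]=|K/L|$ is odd. Moreover $\cent K P\subseteq L$ together with the Glauberman consequence $\cent{K/L}P=\cent K P L/L$ shows that $P$ acts \emph{fixed-point-freely} on the abelian section $K/L$. Finally, applying Lemma \ref{glauberman} to the coprime action of $P$ on $V$ with $\cent V P\subseteq V\cap H$, the character $\nu$ is exactly the unique $P$-invariant constituent of $\varphi{\downarrow_{V\cap H}}$ of odd multiplicity, and it carries the same Glauberman correspondent as $\varphi$, namely $\varphi^*=\nu^*\in\irr{\cent V P}$.

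Next I would unwind the construction underlying $f_\xi$, against which the compatibility must be checked. The essential feature is that it is choice-free and factors through canonical operations. Given $\chi\in\irrp{G\,|\,\xi}{2}$, a degree argument shows that $\chi$ lies over a $P$-invariant character of $K$ over $\xi$: writing $\chi{\downarrow_K}$ as a multiple of a single $G$-orbit, the oddness of $\chi(1)$ forces both the orbit length $[G:I_G(\eta)]$ and the ramification to be odd, so a Sylow $2$-subgroup, hence (up to conjugacy) $P$ itself, stabilises a constituent $\eta$. The fixed-point-free action of $P$ on $K/L$ rigidifies the $P$-invariant members of $\irr{K\,|\,\xi}$, selecting a canonical character $\widehat\xi$; this is where the absence of choices is decisive. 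One then applies the Clifford correspondence over $\widehat\xi$ and the Glauberman correspondence attached to the coprime action of $P$ on the relevant odd-order sections to descend to the local subgroup $H=\norm G P L$. The plan is to verify, at each stage, that the operation transports ``lying over $\varphi$'' on the $G$-side to ``lying over $\nu$'' on the $H$-side.

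For the comparison I would exploit that $V$ is sandwiched, $K\subseteq V\subseteq G$, together with the transitivity
$$
\chi{\downarrow_{V\cap H}}=(\chi{\downarrow_V}){\downarrow_{V\cap H}}=(\chi{\downarrow_H}){\downarrow_{V\cap H}},
$$
valid because $V\cap H$ is contained in both $V$ and $H$. The right-hand expression links $f_\xi(\chi)$, a constituent of $\chi{\downarrow_H}$, to $\nu$ through the descent $V\cap H=\cent V P L$, while the left-hand expression links $\varphi$, a constituent of $\chi{\downarrow_V}$, to $\nu$ through Lemma \ref{glauberman}, which presents $\nu$ via the common datum $\varphi^*=\nu^*$ in $\cent V P$. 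Thus the biconditional reduces to the assertion that Isaacs' canonical descent from $G$ to $H$ commutes with the two Glauberman correspondences ${\rm Irr}_P(V)\to\irr{\cent V P}$ and ${\rm Irr}_P(V\cap H)\to\irr{\cent V P}$, which share the target $\cent V P=\cent{V\cap H}P$ and agree on the pair $(\varphi,\nu)$. Concretely, I would compute the multiplicity $[\chi{\downarrow_{V\cap H}},\nu]$ modulo $2$ along both sides of the displayed square and invoke the uniqueness of the odd-multiplicity constituent in Lemma \ref{glauberman} to discard all but the distinguished terms; alternatively, as remarked in the text, the whole argument can be run inside Wolf's framework \cite{wolf}.

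The main obstacle is exactly this last compatibility: showing that, modulo $2$, every constituent of $\chi{\downarrow_{V\cap H}}$ other than the two distinguished ones cancels, equivalently that the $V$- and $(V\cap H)$-level Glauberman correspondences are genuinely compatible with Isaacs' $G$-to-$H$ descent. This is not a formal matter, since $V$ is only assumed $P$-invariant (not normal) and $\xi$ need not be $K$-invariant. I expect to handle it by first applying the Clifford correspondence over $\xi$ to reduce to the case where $\xi$ is $K$-invariant, and then using the explicit structure of the fixed-point-free action of $P$ on the abelian section $K/L$ — where $\irr{K\,|\,\xi}$ becomes a torsor under $\irr{K/L}$ with a unique $P$-fixed point — to make the Clifford and Glauberman bookkeeping completely explicit and the agreement of the two correspondents on $(\varphi,\nu)$ directly verifiable.
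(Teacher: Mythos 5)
There is a genuine gap: your proposal is a plan rather than a proof, and the mechanism you propose for the key step fails in exactly the case that matters. You correctly reduce the statement to the biconditional ``$\chi$ lies over $\varphi$ if and only if $f_\xi(\chi)$ lies over $\nu$,'' and your preliminary observations ($V\cap H=\cent V P L$, $K\cap H=L$, $G=HK$, the fixed-point-free action of $P$ on $K/L$, and the identification of $\nu$ via Lemma \ref{glauberman}) are all correct and are indeed used in the text. But you never establish the biconditional: your last paragraph concedes that the compatibility of Isaacs' descent with the two Glauberman maps is ``the main obstacle'' and only describes how you \emph{expect} to handle it. Worse, your picture of how $f_\xi$ is built is inaccurate where it counts. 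After the Clifford reduction (Step \ref{invariant} in the text) and the reduction via Isaacs' Theorem 2.7 and Lemma 10.5 (Step \ref{fullyramifiedwrong}, corrected: Step \ref{fullyramified}), the residual case is that $\xi$ is \emph{fully ramified} with respect to $K/L$. There $\irr{K\, |\, \xi}$ is a single $G$-invariant character $\theta$ -- not a torsor under $\irr{K/L}$; the torsor picture holds precisely when $\xi$ extends to $K$, the opposite extreme -- so Clifford correspondence over your canonical $\widehat\xi$ is vacuous and yields no descent from $G$ to $H$. The actual descent is Isaacs' Theorem 9.1: a complement $U$ with $G=KU$, $K\cap U=L$ (which must be arranged to equal $H$ via Glauberman's Lemma and a Frattini argument) and the canonical character $\Psi$ of $G/L$, with $f_\xi$ defined by $\chi{\downarrow_H}=(\Psi{\downarrow_H})\hat\chi$. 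This is not a Glauberman correspondence, and verifying the stated property there requires identifying $W=H\cap V$ as the Theorem 9.1 complement for the character five $(V,K,L,\theta,\xi)$, using the compatible definition of $\Psi$ on $V$, and invoking the equivalence, for odd-order $V$, between ``$[\varphi{\downarrow_{V\cap H}},\nu]$ odd'' and ``$\varphi^*=\nu^*$'' from Lemma \ref{glauberman}. None of this machinery appears in your proposal.

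A second, independent gap: even the Clifford reduction is not formal, and your proposed reduction target ($\xi$ invariant in $K$) is weaker than what the argument needs ($\xi$ invariant in $G$, so that $J\nor G$ and the character five lives over all of $G$). To push the property through the inertia group $T=I_G(\xi)$ one must show that the Clifford correspondents $\gamma\in\irr{V\cap T\, |\, \xi}$ of $\varphi$ and $\delta\in\irr{V\cap H\cap T\, |\, \xi}$ of $\nu$ again satisfy $\gamma^*=\delta^*$, so that the inductive hypothesis applies inside $T$. The text proves this by a careful mod-$2$ bookkeeping: inducing $\gamma$ up to $(V\cap T)K$, restricting to $V\cap H\cap T$, and showing that every nontrivial $P$-orbit of constituents contributes an even multiplicity, so the unique odd-multiplicity $P$-invariant constituent survives and matches $\delta$ by uniqueness in Clifford correspondence. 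Your proposal gestures at a ``multiplicity modulo $2$'' computation of this kind but does not carry it out, and without it the induction that ultimately delivers the property has no base to stand on.
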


\begin{proof}
Our assumptions give $G=HK$, $L=H\cap K$ and thus $G/K\cong H/L$. We use induction on $|G|$. We shall denote the map $f_\xi$ in the statement of Lemma \ref{bottom} by
$$
\, \hat\,  :\irrp{G\, |\, \xi}{2}\longrightarrow \irrp{H |\, \xi}{2}\, . 
$$

\medskip

\begin{Step}\label{invariant}
{We can assume that
$\xi$ is invariant in $G$.}
\end{Step}

\begin{proof}[Proof of Step 1]
Let $T=I_G(\xi)$ be the inertia subgroup of $\xi$ in $G$, and assume that $T<G$. Now $P\subseteq T$, by our hypothesis. Thus, 
the result holds for the group $T$, its subgroups $K\cap T$, $L$ and $H\cap T$, and the character $\xi\in\irr L$ by induction. So there exists a choice-free correspondence

\begin{equation}\label{inertia}
\irrp{T\, |\, \xi}{2}\longrightarrow \irrp{H\cap T\, |\, \xi}{2}\, 
\end{equation} satisfying the desired property. 
Let $\alpha\in \irrp{T\, |\, \xi}{2}$, and note that $\chi=\alpha{\uparrow^G}\in\irrp{G\, |\, \xi}{2}$ by Clifford's correspondence. 
Suppose that $\beta\in\irrp{H\cap T\, |\, \xi}{2}$ corresponds to $\alpha$ under the map (\ref{inertia}). Then ${\beta}{\uparrow^{H}}\in\irrp{H\, |\, \xi}{2}$ by Clifford's correspondence, and we set $\hat{\chi}:={\beta}{\uparrow^{H}}$. By the properties of Clifford's correspondence, this defines a one-to-one map
$$
\hat{}\,:\,  \irrp{G\, |\, \xi}{2}\longrightarrow \irrp{H\, |\, \xi}{2}\, .
$$
Next we prove that the map $\, \hat{}\, $ satisfies the property in the statement. 

Let $K\subseteq V\subseteq G$, $\varphi\in\irr {V\, |\, \xi}$ and $\nu\in\irr{V\cap H\, |\, \xi}$ be as in the statement. 
Let also $\gamma\in\irr{V\cap T\, |\, \xi}$ be such that $\gamma{\uparrow^V}=\varphi$, and let $\delta\in\irr{V\cap H\cap T\, |\, \xi}$ such that
$\delta{\uparrow^{V\cap H}}=\nu$. Since both $\varphi$ and $\nu$ are $P$-invariant, it follows from the uniqueness in Clifford's
correspondence that $\gamma$ and $\delta$ are $P$-fixed as well.

We claim that $\gamma^*=\delta^*$. First, note that $\cent{V\cap T}{P}\subseteq V\cap H\cap T$.
Write $\mu=\gamma{\uparrow^{(V\cap T)K}}\in\op{Irr}{((V\cap T)K\, |\, \xi)}$, 
and notice that $\mu$ is $P$-invariant. 
By Lemma \ref{glauberman}, we have that $\mu^*=\gamma^*$. 
Now, let $\eta$ be the unique $P$-invariant irreducible constituent of 
$\gamma{\downarrow_{V\cap H\cap T}}$ having odd multiplicity, again by Lemma \ref{glauberman}, so 
$\eta^*=\gamma^*=\mu^*$. We shall prove that $\eta=\delta$. Note that since $\gamma$ lies over 
$\xi$, which is invariant in $V\cap T$, it is clear that $\eta$ lies over 
$\xi$ as well. Thus $\omega=\eta{\uparrow^{V\cap H}}$ is irreducible and $P$-invariant, 
by Clifford's correspondence. Also, 
observe that $\eta$ is the unique $P$-invariant
irreducible constituent of $\mu{\downarrow_{V\cap H\cap T}}$ having odd multiplicity, by Lemma \ref{glauberman}. 
Let $\zeta=\zeta_1,\ldots,\zeta_r$ be a non-trivial $P$-orbit
of irreducible constituents of $\mu{\downarrow_{V\cap H\cap T}}$, and write 
$\Delta=\zeta_1+\cdots+\zeta_r$. Suppose that 
$\zeta_i=\zeta^{x_i}$, where $x_i\in P$. 
Then, since $\omega$ is $P$-invariant, we have that

$$
[\omega, \Delta{\uparrow^{V\cap H}}]=\sum_{i=1}^r\, [\omega, \zeta_i\uparrow^{V\cap H}]=
\sum_{i=1}^r\, [\omega, (\zeta{\uparrow^{V\cap H}})^{x_i}]=
r\, [\omega, \zeta{\uparrow^{V\cap H}}]\, ,
$$ which of course is even.
It follows that

$$
[\omega, (\mu{\downarrow_{V\cap H\cap T}})\big{\uparrow}^{V\cap H}]\equiv [\eta, \mu{\downarrow_{V\cap H\cap T}}]\equiv 1 \pmod 2\, .
$$ Since $\varphi{\downarrow_{V\cap H}}=(\mu{\downarrow_{V\cap H\cap T}})\big{\uparrow}^{V\cap H}$, we deduce by Lemma \ref{glauberman} that
$\omega=\nu$. Thus $\eta=\delta$ by uniqueness in 
Clifford's correspondence, and our claim follows.

Now, by induction the map (\ref{inertia}) restricts to a one-to-one correspondence
$$
\irrp{T\, |\, \gamma}{2}\longrightarrow \irrp{H\cap T\, |\, \delta}{2}\, .
$$ Let $\alpha\in\irrp{T\, |\, \xi}{2}$ and write $\chi=\alpha{\uparrow^G}\in\irrp{G\, |\, \xi}{2}$. 
Then 
$$
[\chi, \varphi{\uparrow^G}]=[\chi, \gamma{\uparrow^G}]=[\chi, (\gamma{\uparrow^T})\big{\uparrow}^G]=[\alpha, \gamma{\uparrow^{T}}]\, ,
$$
where the last equality follows from Clifford correspondence because $\alpha, \gamma{\uparrow^T}\in\ch {T\, |\, \xi}$. 
Thus, $\chi$ lies over $\varphi$ if and only if $\alpha$ lies over $\gamma$. 
Similarly, for $\beta\in\irrp{H\cap T\, |\, \xi}{2}$, we have that 
$\beta{\uparrow^H}\in\irrp{H\, |\, \xi}{2}$ lies over $\nu$
if and only if $\beta$ lies over $\delta$. We deduce that 
$\, \hat{}\, $
restricts to a bijection
$$
\irrp{\, G|\, \varphi}{2}\longrightarrow \irrp{H |\, \nu}{2}\, ,
$$
as wanted. So we can and do assume from now on that $\xi$ is invariant in $G$. 
\end{proof}

\begin{Step}\label{fullyramified}
{We can assume that $\xi$ is fully-ramified in $K/L$.}
\end{Step}

\begin{proof}[Proof of Step  \ref{fullyramified}]
Let $L\subseteq J\subseteq K$ be the largest subgroup of $K$ such that $\xi$ extends to $J$ and every extension of $\xi$ to
$J$ is fully-ramified in $K$ (see Theorem 2.7 of \cite{brown}). Since $\xi$ is $G$-invariant, it is clear that $J\nor G$. By Problem $13.5$ of \cite{isaacs}, there exists a unique extension $\phi\in\irr{J}$ of $\xi$ which is $P$-invariant. Since $PJ\nor HJ$, it easily follows from the uniqueness of $\phi$ that this character is invariant in $HJ$. Then, by Lemma 10.5 of \cite{brown}, for any $P$-invariant subgroup $J\subseteq X\subseteq HJ$, restriction of characters defines a bijection

\begin{equation}\label{restriction}
\irr{X\, |\, \phi}\longrightarrow \irr{X\cap H\, |\, \xi}\, ,
\end{equation}
and it is immediate that this bijection maps $P$-invariant characters onto $P$-invariant characters. By uniqueness of $\phi$, an easy counting argument on character degrees yields $\irrp{X\, |\, \xi}{2}\subseteq \irr{X\, |\, \phi}$, for any $X$ as above. It follows from this and  Lemma \ref{glauberman} that restriction of characters defines a bijection
$$
\irrp{HJ\, |\, \xi}{2}\longrightarrow \irrp{H |\, \xi}{2}\, 
$$
satisfying the condition required for the correspondence $f_\xi$ in the statement. Thus, we may assume that $L=J$, which is the same to say that 
$\xi$ is fully-ramified in $K/L$. 

\end{proof}
\setcounter{Step}{0}

By the previous Step, $\xi{\uparrow^K}$ has a unique irreducible constituent, which we denote by $\theta$. We observe that $\theta$ is $P$-invariant, by 
uniqueness.

\medskip 
\begin{FStep}
\end{FStep} We are in a situation in which Theorem 9.1 of \cite{brown} applies. This Theorem gives $U\subseteq G$ such that $G=KU$ and $K\cap U=L$.
By Glauberman's Lemma we may assume that $U$ is $P$-invariant. Applying the Frattini argument (see the last paragraph of p. $632$ in \cite{brown}), we can assume that $U=H$. Let $\Psi$ be the character of $G/L$ given by Theorem 9.1. In particular the equation $\chi{\downarrow_U}=(\Psi{\downarrow_U}) \hat{\chi}$, for $\chi\in\irr{G\, |\, \theta}$ and $\hat{\chi}\in\irr{H\, |\, \xi}$ defines a choice-free one-to-one correspondence between these sets of characters. The uniqueness of $\theta$ gives $\irr{G\, |\, \xi}= \irr{G\, |\, \theta}$, and since $\Psi(1)=\sqrt{|K/L|}$ is odd we obtain a bijective correspondence
$$
\hat{}\, :\irrp{G\, |\, \xi}{2}\longrightarrow \irrp{H |\, \xi}{2}\, .
$$
In order to see that $\, \hat{}\, $ satisfies the desired property, 
let $K\subseteq V\subseteq G$ be a $P$-invariant odd-order subgroup of $G$.
Suppose that $W\subseteq G$ is such that $W/L$ complements $K/L$ in $V/L$
and satisfies the conditions of Theorem 9.1 of \cite{brown} for the character five $(V,K,L,\theta, \xi)$. 
Since $K/L$ acts transitively on the $G$-conjugacy class of $W$,
Glauberman's Lemma allows us
to assume that $W$ is $P$-invariant. Since $PK/K\nor G/K$
and $(|PK/K|, |V/K|)=1$, it follows
that $[W,P]\subseteq K\cap W= L$. Then 
$W/L \subseteq \cent{V/L}{P}\subseteq (H\cap V)/L$. 
Thus $W=H\cap V$. Arguing as before and using the inductive definition of 
the character $\Psi$ (see {p.~619} of \cite{brown}), we deduce that the equation
$\varphi{\downarrow_W}=(\Psi{\downarrow_W})\nu$, where $\varphi\in\irrp{V\, |\, \xi}{2}$
and $\nu\in\irrp{H\cap V\, |\, \xi}{2}$, defines a bijection between
these sets of characters. Furthermore, it is clear that 
a $P$-invariant $\varphi\in\irrp{V\, |\, \xi}{2}$ corresponds 
to a $P$-invariant $\nu\in\irrp{H\cap V}{2}$, and since $|V|$ is odd,
this occurs if and only if $[\varphi{\downarrow_{V\cap H}}, \nu]$ is odd, by Theorem 9.1 of \cite{brown}, 
which in turn is equivalent to
$\varphi^*=\nu^*$,
by Lemma \ref{glauberman}.
Suppose that $\chi\in\irrp{G\, |\, \varphi}{2}$, and write
$$
\chi{\downarrow_V}=e\varphi+\Delta\, ,
$$
where $\Delta$ is either zero, or $\Delta\in\ch{V\, |\, \xi}$ does not contain $\varphi$ as a constituent. Then

$$
(\Psi{\downarrow_H}\hat{\chi})\big{\downarrow_W}=\chi{\downarrow_W}=(e\varphi+\Delta){\downarrow}_W=
e(\Psi{\downarrow_W})\nu+\Delta{\downarrow_W}\, ,
$$
and Theorem 9.1 of \cite{brown} implies that $\hat\chi$
lies over $\nu$, since $\hat{\chi}{\downarrow_W}\in\ch {W\, |\, \xi}$. 
Similarly, if we assume that $\hat\chi$ lies over $\nu$, then
$\chi$ lies over $\varphi$, and it follows that the map $\, \hat{}\, $
restricts to a bijection
$$
\irrp{\, G|\, \varphi}{2}\longrightarrow \irrp{H |\, \nu}{2}\, .
$$ The proof is complete.
\end{proof}

We want our correspondences of characters to be invariant
under the action of suitable Galois automorphisms and group
automorphisms.
More precisely, assuming the notation in Lemma \ref{bottom}, 
if $a\in\Aut (G)$ stabilizes the subgroups $K$ and $L$ of $G$, 
then $\xi^a\in\irr L$
is $P^a$-invariant, and we obtain a bijection
$$
f_{\xi^a}\, :\irrp{G\, |\, \xi^a}{2}\longrightarrow \irrp{H^a |\, \xi^a}{2}\, . 
$$
In this situation, we would like to have that
$$
f_{\xi}(\chi)^a=f_{\xi^a}(\chi^a)\, ,
$$for any $\chi\in\irrp{G\, |\, \xi}{2}$. Similarly, if 
$\sigma\in\Gal(\Q( \omega)/\Q)$, where $\omega\in\mathbb C$
is a primitive \linebreak {$|G|$-th} root of unity, then $\xi^\sigma\in\irr L$ is $P$-invariant
and Lemma \ref{bottom} provides a choice-free bijection
$$
f_{\xi^\sigma}\, :\irrp{G\, |\, \xi^\sigma}{2}\longrightarrow \irrp{H |\, \xi^\sigma}{2}\, .
$$
As before, we would like the map $f_{\xi^\sigma}$ to satisfy
$$
f_{\xi}(\chi)^\sigma=f_{\xi^\sigma}(\chi^\sigma)\, ,
$$
for any $\chi\in\irrp{G\, |\, \xi}{2}$. 

\medskip

A careful analysis of the proof of Lemma \ref{bottom} shows that the bijection $f_\xi$
in that result is built upon the following three correspondences of characters:
Clifford's correspondence (as in  Step \ref{invariant} of the proof); the correspondence
in Lemma $10.\,5$ of \cite{brown}, given by character restriction (as in Step \ref{fullyramified} of the proof);
and finally, the correspondence of Theorem 9.1 in \cite{brown}. 
The two former correspondences are easily seen to be 
preserved by Galois action and action by automorphisms of $G$
as described above. Thus, 
in order to see that the correspondence in Lemma \ref{bottom}
also satisfies this, we need to prove that the correspondence
in Theorem 9.1 of \cite{brown} is invariant under the actions
described in the previous paragraph. This is the content of the following result. 

\begin{proposition}\label{natural}
Let $(G,K,L,\theta, \phi)$ be a character five, and assume that $|K/L|$ is odd. 
Let $U\subseteq G$ be as in the conclusion of Theorem 9.1
of \cite{brown}. 

\begin{enumerate}

\item Suppose that $\sigma\in\Gal(\Q(\omega)/\Q)$, where 
$\omega\in\mathbb C$ has order $|G|$. 
If $\chi\in\irr{G\, |\, \theta}$ corresponds to $\xi\in\irr{U\, |\, \phi}$ 
under the bijection of Theorem 9.1
of \cite{brown}, then $\chi^\sigma$ corresponds to $\xi^\sigma$
under this same bijection when 
defined with respect to the character five $(G,K,L,\theta^\sigma, \phi^\sigma)$.
 
\item Suppose that $a\in\Aut(G)$ stabilizes the subgroups $K, L$ of $G$. 
If $\chi\in\irr{G\, |\, \theta}$ corresponds to $\xi\in\irr{U\, |\, \phi}$ 
under the bijection of Theorem 9.1
of \cite{brown}, then $\chi^a$ corresponds to $\xi^a\in\irr{U^a}$ under this same bijection 
when defined with respect to the character five $(G,K,L,\theta^a, \phi^a)$.

\end{enumerate}

\end{proposition}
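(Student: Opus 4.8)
The plan is to reduce both parts to a single equivariance property of the distinguished character $\Psi$ of $G/L$ furnished by Theorem 9.1 of \cite{brown}. Recall that the bijection of that theorem is defined in a choice-free way by the relation $\chi{\downarrow_U}=(\Psi{\downarrow_U})\,\xi$ for $\chi\in\irr{G\,|\,\theta}$ and $\xi\in\irr{U\,|\,\phi}$, where $\Psi$ is the canonical character of degree $\sqrt{|K/L|}$ attached to the character five $(G,K,L,\theta,\phi)$, with $L\subseteq\ker\Psi$. Since $|K/L|$ is odd, $\Psi$ is uniquely determined by the five — this is precisely what makes the correspondence canonical — and the whole proposition follows once we show that the assignment $(G,K,L,\theta,\phi)\mapsto\Psi$ commutes with the two actions under consideration. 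Thus all the real content is concentrated in the claim $\Psi^\sigma=\Psi'$ and its automorphism analogue, where $\Psi'$ denotes the canonical character of the transformed five.

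For part (1), observe that $\sigma$ fixes the subgroups $G,K,L$ (and fixes $U$, as a Galois automorphism acts only on character values) and preserves character degrees and the fully-ramified structure of the section $K/L$; hence $(G,K,L,\theta^\sigma,\phi^\sigma)$ is again a character five with $|K/L|$ odd, and it has its own canonical character $\Psi'$. I would then prove $\Psi^\sigma=\Psi'$ by invoking the uniqueness just mentioned: $\Psi^\sigma$ is a character of $G/L$ of the correct degree $\sqrt{|K/L|}$ lying over the $\sigma$-transform of the relevant character of $K/L$, and the data that single $\Psi$ out among the candidate square roots — the symplectic form on the section induced by $\phi$, together with the canonical square-root selection available because $|K/L|$ is odd — are carried by $\sigma$ to the corresponding data for $(G,K,L,\theta^\sigma,\phi^\sigma)$. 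Applying $\sigma$ to the defining relation then gives $\chi^\sigma{\downarrow_U}=(\Psi^\sigma{\downarrow_U})\,\xi^\sigma=(\Psi'{\downarrow_U})\,\xi^\sigma$, which says precisely that $\chi^\sigma$ corresponds to $\xi^\sigma$ under the bijection attached to the transformed five.

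Part (2) runs along the same lines. Since $a$ stabilises $K$ and $L$, we have $G=KU^a$ and $K\cap U^a=L$, so $U^a$ is an admissible complement for the five $(G,K,L,\theta^a,\phi^a)$ and $\xi^a\in\irr{U^a\,|\,\phi^a}$; thus the target bijection makes sense. Exactly as before, $\Psi^a$ meets the characterising conditions for $(G,K,L,\theta^a,\phi^a)$ — the correct degree $\sqrt{|K/L|}$, the prescribed behaviour on the section, and the canonical normalisation, all transported by $a$ — so $\Psi^a$ is its canonical character. Applying $a$ to $\chi{\downarrow_U}=(\Psi{\downarrow_U})\,\xi$ yields $\chi^a{\downarrow_{U^a}}=(\Psi^a{\downarrow_{U^a}})\,\xi^a$, i.e. $\chi^a$ corresponds to $\xi^a$, as required.

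The main obstacle is the equivariance $\Psi^\sigma=\Psi'$ (and its automorphism analogue), because $\Psi$ is constructed as a square root and a careless construction would leave a sign or root-of-unity ambiguity that need not be respected by the actions. The substantive work is therefore to re-read Isaacs' construction of $\Psi$ (pp.~619--632 of \cite{brown}) and verify that the recipe producing the canonical square root is natural with respect to both field automorphisms and subgroup-preserving group automorphisms; the hypothesis that $|K/L|$ be odd is essential here, since it is exactly what removes the ambiguity and forces $\Psi$, and hence the entire correspondence, to be uniquely and canonically determined by the five.
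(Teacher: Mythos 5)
Your reduction is the same as the paper's: everything comes down to showing that the canonical character $\Psi$ attached to the five transforms correctly, i.e.\ that $\Psi^\sigma$ (resp.\ $\Psi^a$) equals the character $\Psi'$ (resp.\ $\Psi_a$) that Theorem 9.1 of \cite{brown} attaches to $(G,K,L,\theta^\sigma,\phi^\sigma)$ (resp.\ $(G,K,L,\theta^a,\phi^a)$); once that is known, applying $\sigma$ or $a$ to the defining relation $\chi{\downarrow_U}=(\Psi{\downarrow_U})\xi$ finishes the argument, exactly as you say. Your observation that $U^a$ is an admissible complement for the transformed five also matches the paper.

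However, the one claim that carries all the content --- $\Psi^\sigma=\Psi'$ and $\Psi^a=\Psi_a$ --- is exactly the claim you do not prove. Your justification (``the data that single $\Psi$ out \dots are carried by $\sigma$ to the corresponding data'') is a restatement of the desired equivariance, not an argument for it, and you then explicitly defer ``the substantive work'' of checking Isaacs' construction. That is the gap. The paper closes it concretely: by the algorithm of \cite[pp.~619, 626]{brown}, the value $\Psi(s)$ for $s\in U$ is determined by the quantities $\sum_{y\in(K/L)_p}\langle\langle y,y^s\rangle\rangle_\phi^2$, where $(K/L)_p$ runs over Sylow subgroups of $K/L$ and $\langle\langle\,,\,\rangle\rangle_\phi$ is the form of \cite[p.~596]{brown}. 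One then checks directly from the definition of that form that
$$
\sum_{y\in(K/L)_p}\langle\langle y,y^{asa^{-1}}\rangle\rangle_\phi^2=\sum_{z\in(K/L)_p}\langle\langle z,z^s\rangle\rangle_{\phi^a}^2
\quad\text{and}\quad
\Bigl(\sum_{y\in(K/L)_p}\langle\langle y,y^s\rangle\rangle_\phi^2\Bigr)^{\!\sigma}=\sum_{y\in(K/L)_p}\langle\langle y,y^s\rangle\rangle_{\phi^\sigma}^2,
$$
which is precisely the statement that the recipe producing the canonical square root commutes with $a$ and $\sigma$. Without some computation of this kind your proof is incomplete: the whole point of the proposition is that the sign/root-of-unity ambiguity you worry about in your last paragraph genuinely does not arise, and that has to be verified against the actual construction rather than asserted.
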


\begin{proof}
Let $\Psi\in\ch G$ be defined as in Theorem $9.1$ of \cite{brown},
 with respect to the form $\langle\langle \, ,\, \rangle\rangle_{\phi}$.
We first prove (2). 
It is clear that \cite[Theorem 9.1]{brown}
appplies to the character five $(G, K, L, \theta^a, \phi^a)$, 
and we may choose $U^a\subseteq G$ as a representative of the $G$-conjugacy class
of subgroups of $G$ in the conclusion of that theorem. 
Suppose that $\Psi_a\in\ch G$ is the character computed with respect to the 
form $\langle\langle \, ,\, \rangle\rangle_{\phi^a}$ as in \cite[Theorem 9.1]{brown}. 
Since
$$
(\chi^a){\downarrow_{U^a}}=(\chi{\downarrow_U})^a=(\Psi{\downarrow_U}\xi)^a=(\Psi{\downarrow_U})^a\xi^a=(\Psi^a){\downarrow_{U^a}}\xi^a\, ,
$$
it suffices to prove that $\Psi_a=\Psi^a$. By the algorithm given in \cite[p.\,619 and p.\,626]{brown}
to compute $\Psi$, this follows immediately from the fact  that for any $s\in U$
and any Sylow $p$-subgroup $(K/L)_p$ of $K/L$, we have 
$$
\sum_{y\in{(K/L)}_p} {\langle\langle y , y^{a{s}a^{-1}} \rangle\rangle_{\phi}}^2=
\sum_{z\in{(K/L)_p}} {\langle\langle z^{a^{-1}} , z^{sa^{-1}} \rangle\rangle_{\phi}}^2=
\sum_{z\in{(K/L)_p}} {\langle\langle z , z^s \rangle\rangle_{\phi^{a}}}^2\, ,
$$
where the last identity is a consequence of the definition of the form $\langle\langle \, ,\, \rangle\rangle_{\phi}$
on \cite[p.\,596]{brown}. In order to prove (1), we argue similarly and notice that
for any $s\in U$, we have
$$
\big{(}\sum_{y\in{(K/L)_p}} {\langle\langle y , y^{s} \rangle\rangle_{\phi}}^2\big{)}^\sigma=
\sum_{y\in{(K/L)_p}} \big{(}{\langle\langle y , y^{s} \rangle\rangle_{\phi} }^\sigma\big{)}^2=
\sum_{y\in{(K/L)_p}} {\langle\langle y , y^{s} \rangle\rangle_{\phi^\sigma} }^2\, .
$$
\end{proof}

We remark that the proof of Lemma \ref{bottom} has the same structure that the proof of Theorem 10.6(ii)
of \cite{brown}. It is thus clear that the correspondence of characters of that theorem is invariant under
Galois action and action by group automorphisms as above. Of course, this was already evident
from the arguments in \cite{brown}.

Our next result implies the maximal-defect case of Theorem \ref{AMK}. 
The general case follows from the reduction to this case given by Fong-Reynolds theory, as discussed prior to Lemma \ref{bottom}.

\begin{teo}\label{main}
Let $G$ be a finite solvable group, $P\in\syl{2}{G}$ and $M\nor G$ with $|M|$ odd.
Assume that $\theta\in\irr M$ is $G$-invariant, and let $\theta^*\in\irr{\cent M P}$ be its Glauberman
correspondent for the action of $P$ on $M$. Then there exists a choice-free bijection
$$
\irrp{G\, |\, \theta}{2}\longrightarrow \irrp{\norm G P \, |\, \theta^*}{2}\, .
$$
\end{teo}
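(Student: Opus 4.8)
The plan is to prove Theorem~\ref{main} by reducing the general Glauberman-action situation to a repeated application of Lemma~\ref{bottom}, using a chief series of $G$ through $M$ to build the bijection inductively. First I would set $N=\norm G P$ and observe that $\cent M P=N\cap M$, as noted in the discussion of the Glauberman correspondence; the target $\irrp{N\,|\,\theta^*}{2}$ is therefore the natural analogue of $\irrp{G\,|\,\theta}{2}$ after passing to the centralizer. Since $M$ has odd order and is solvable, I would choose a $P$-invariant normal subgroup $L$ of $G$ with $L\leq M$ and $M/L$ a chief factor of $G$ (hence an elementary abelian $q$-group for some odd prime $q$). The intention is to peel off one chief factor at a time and invoke induction on $|M|$.

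The key step is to set up the data so that Lemma~\ref{bottom} applies with $K=M$ and the given $L$. Condition (1) holds since $|M|$ is odd; condition (2) holds because $M/L$ is abelian; for condition (3) I would use the Frattini-type argument $G=\norm G P\, M=NM$ (valid because $P\in\syl 2 G$ and $M\nor G$), and I must arrange $\cent M P\subseteq L$. The latter is the delicate point: it need not hold for an arbitrary chief factor, so I would instead choose $L$ so that $\cent{M/L}{P}=1$, i.e. so that $P$ acts fixed-point-freely on the chief factor $M/L$, which by the coprime-action identity $\cent{M/L}{P}=\cent M P L/L$ forces $\cent M P\subseteq L$. One arranges this by ordering the chief series so that the factors on which $P$ acts trivially (these contribute to $\cent M P$) are separated from those on which it acts nontrivially; more precisely I would take $L$ to be a $P$-invariant normal subgroup with $\cent M P\subseteq L$ and $M/L$ a minimal such quotient, which exists because $\cent M P$ itself is $P$-invariant (indeed $P$-fixed) and one can interpolate a chief factor of $G$ above it. With $\xi\in\irr L$ a $P$-invariant constituent of $\theta{\downarrow_L}$, Lemma~\ref{bottom} yields $f_\xi\colon \irrp{G\,|\,\xi}{2}\to\irrp{H\,|\,\xi}{2}$ where $H=\norm G P L$, together with its compatibility with the Glauberman map through the subgroup $V$.

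The inductive engine is the compatibility clause of Lemma~\ref{bottom}: taking $V=M$ and $\varphi=\theta$, the $P$-fixed character $\nu\in\irr{M\cap H\,|\,\xi}$ with $\theta^*=\nu^*$ is exactly the Glauberman correspondent data one level down, and $f_\xi$ restricts to a bijection $\irrp{G\,|\,\theta}{2}\to\irrp{H\,|\,\nu}{2}$. Now $H=\norm G P L$ is a solvable group, $P\in\syl 2 H$, $L\nor H$ has odd order, and $\nu\in\irr{M\cap H\,|\,\xi}$; I would apply the inductive hypothesis of Theorem~\ref{main} to $H$ with its odd normal subgroup $M\cap H$ (whose order is strictly smaller since we have removed the chief factor $M/L$) and the $P$-invariant character $\nu$, noting $\norm H P=\norm G P=N$ and that the Glauberman correspondent of $\nu$ for the action of $P$ on $M\cap H$ is again $\theta^*$ by the transitivity/compatibility of the Glauberman map expressed in Lemma~\ref{glauberman}. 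Composing $f_\xi$ with the bijection supplied by induction gives the desired choice-free map $\irrp{G\,|\,\theta}{2}\to\irrp{N\,|\,\theta^*}{2}$.

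The main obstacle, and the step I would treat most carefully, is verifying hypothesis (3) of Lemma~\ref{bottom} simultaneously with the strict decrease of $|M\cap H|$ at each stage, while keeping the Glauberman correspondent aligned. The condition $\cent K P\subseteq L$ is what makes the choice of $L$ subtle: one cannot take an arbitrary chief factor, and one must confirm that a $P$-invariant $L$ with $\cent M P\subseteq L$ and $M/L$ chief genuinely exists and that passing to $H$ preserves all the coprimality and normality hypotheses. The compatibility of Glauberman correspondents across the reduction — ensuring $\nu^*=\theta^*$ and that iterating the construction lands on $\theta^*$ rather than some conjugate — is where the uniqueness statements in Lemma~\ref{glauberman} must be invoked explicitly, and I would check that the ``choice-free'' nature is preserved under composition so that the final bijection is canonical. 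The base case $M=\cent M P$ (equivalently $P$ acts trivially on $M$) gives $M\subseteq\cent G P$, $\theta=\theta^*$, $G=N$, and the identity map, which is trivially choice-free.
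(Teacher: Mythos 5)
Your reduction collapses at its very first step: the identity $G=\norm G P\,M$ is \emph{not} a consequence of the Frattini argument here, and it is false in general. The Frattini argument requires $P$ to be a Sylow subgroup of the \emph{normal} subgroup in question; since $|M|$ is odd and $P$ is a $2$-group, $P\cap M=1$ and no such conclusion is available. (Take $M=1$: then $\norm G P\,M=\norm G P$, which is proper in $G$ whenever $P$ is not normal, yet the theorem in that case is precisely Isaacs' McKay bijection $\irrp{G}{2}\to\irrp{\norm G P}{2}$ — a deep result, not a triviality.) This same error breaks your base case: when $P$ acts trivially on $M$ you assert $G=N$, which does not follow, so your induction on $|M|$ never terminates in a situation you can handle. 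The paper devotes an entire first step to exactly this point: it reduces to the case $G=\norm G P\,M$ by passing down the normal series $G\supseteq S\supseteq J\supseteq M$ with $S/M=O^{2'2}(G/M)$ and $J/M=(S/M)'$, gluing together the natural correspondences of Isaacs' Theorem 10.6(ii) over the $P$-invariant odd-degree constituents $\varphi$ of $\chi{\downarrow_J}$ and checking well-definedness via invariance under $N$-conjugation. Without some such device your argument only ever shrinks $M$ and never accounts for the part of $G$ lying above $NM$.

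There is a second, independent problem with your choice of $(K,L)$. Lemma~\ref{bottom} requires $L\nor G$ with $\cent K P\subseteq L$, and you propose $K=M$ with $M/L$ a chief factor of $G$ containing no $P$-fixed points. Such an $L$ need not exist: if $M$ is a minimal normal (hence elementary abelian) subgroup of $G$ with $M=[M,P]\times\cent M P$ and both factors nontrivial, the only $G$-normal subgroups inside $M$ are $1$ and $M$, so no admissible $L$ is available. The paper avoids this by taking $K=[M,P]$ and $L=K'$: then $M=\cent M P\,K$ gives $G=NK$ (once $G=NM$ is established), and coprime action on the abelian quotient $K/K'=[K/K',P]\times\cent{K/K'}{P}$ forces $\cent K P\subseteq K'=L$ automatically. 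Two further points you flag but do not resolve also require genuine work in the paper: the character $\nu$ must be shown to be $H$-invariant before the inductive hypothesis applies to $H$ (the paper proves this via the equivariance of the Glauberman map under $N$-conjugation), and the restriction of $f_\xi$ to $\irrp{G\,|\,\theta}{2}$ must be shown independent of the chosen $P$-invariant constituent $\xi$ of $\nu{\downarrow_L}$, which is where Proposition~\ref{natural} is needed.
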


\begin{proof} We divide the proof of the theorem in several steps. 
Write $N=\norm G P$.

\medskip
\begin{Step}\label{relative}
We can assume that $G=NM$. 
\end{Step}

\begin{proof}[Proof of Step \ref{relative}]
We shall see that there exists a choice-free bijective correspondence
$$
\irrp{G\, |\, \theta}{2}\longrightarrow \irrp{NM\, |\, \theta}{2}\, .
$$
Assume that $NM<G$. Let $M\subseteq S\nor G$ be such that $S/M=O^{2'2}(G/M)$. 
By the Frattini argument, we have that $G=NS$. Thus, by assumption $M<S$.
Let $M\subseteq J\nor G$ be such that $J/M=(S/M)'$, and note that
$J<S$, because $G$ is solvable. Note also that $|S/J|$ is odd, 
and thus $\cent{S/J}{P}=1$. Let $H=NJ$, and observe that $G=HS$ and $H\cap S=J$.
We shall define a choice-free correspondence from 
$\irrp{G\, |\, \theta}{2}$ into $\irrp{H\, |\, \theta}{2}$, 
and then repeated applications of the same argument will yield the result. 

Let $\chi\in\irrp{G\, |\, \theta}{2}\bigcup\irrp{H\, |\, \theta}{2}$. 
Of course, all the irreducible constituents of $\chi{\downarrow_J}$ have odd degree. 
Since $\theta$ is $G$-invariant, it is also clear that
every irreducible constituent of $\chi{\downarrow_J}$ lies over $\theta$.
Observe that $P$ acts on the irreducible constituents of $\chi{\downarrow_J}$ by conjugation, 
and thus since $\chi(1)$ is odd, an easy counting argument implies that there exists
a $P$-invariant irreducible constituent of $\chi{\downarrow_J}$. Now, by Theorem 10.6(ii) of \cite{brown},
for each $P$-invariant $\varphi\in\irrp{J\,|\, \theta}{2}$, there exists a natural correspondence
$$
F_{\varphi} : \irrp{G\, |\, \varphi}{2}\longrightarrow \irrp{H\, |\, \varphi}{2}\, .
$$
If $\chi\in\irrp{G\, |\, \varphi_1}{2}\bigcap\irrp{G\, |\, \varphi_2}{2}$, 
where $\varphi_i\in\irrp{J\, |\, \theta}{2}$ is $P$-invariant for $i=1,2$, we claim that
$F_{\varphi_1}(\chi)=F_{\varphi_2}(\chi)$. Indeed, if 
$\varphi_1=(\varphi_2)^x$ for some $x$ in $G$ and we let $I$ be the inertia subgroup of 
$\varphi_1$ in $G$, then since $\varphi_1$ is $P$-fixed, we have that $P$ is contained in $I$, and since $\varphi_2$ is 
$P$-fixed also $P^{x}$ is contained in $I$. Now there exists $y$ in $I$ such that 
$n=yx^{-1}$ normalizes $P$, and $(\varphi_1)^n=\varphi_2$. Since the maps $F_{\varphi_i}$
are invariant under automorphisms of $G$ induced by $N$-conjugation, we have
$$
F_{\varphi_2}(\chi)=F_{\varphi_2}(\chi)^{n^{-1}}=F_{\varphi_1}(\chi^{n^{-1}})=F_{\varphi_1}(\chi)\, ,
$$
as claimed. Thus, the union $F$ of the maps $F_\varphi$, where $\varphi\in\irrp{J\, |\, \theta}{2}$ and 
$\varphi$ is $P$-invariant, is a well-defined map. By the above observations, it is clear that 
$$
F:\irrp{G\, |\, \theta}{2}\longrightarrow \irrp{H\, |\, \theta}{2}\, ,
$$
and it is easy to check that $F$ is bijective. Finally, note that the map $F$ is choice-free, 
because the maps $F_{\varphi}$ are choice-free. This completes the proof of Step \ref{relative}. 
\end{proof}

Let $K=[M,P]$, and note that $M=\cent M P K$ by coprime action. In particular $G=N K$. Let $L=K'<K$
and note that $\cent {K/L}{P}=1$, again by coprime action. Write $H=NL$, and notice that $H\cap K=L$. 
Denote by $\nu\in\irr{M\cap H}$ the unique $P$-invariant character such that $\nu^*=\theta^*$. Let
$y\in N$. Since the Glauberman map is invariant under automorphisms of $G$ stabilizing  both $P$ and $M$,
and in this case also stabilizing $M\cap H$, we have that
$$
(\nu^y)^*=(\nu^*)^y=(\theta^*)^y=(\theta^y)^*=\nu^*\, .
$$
It follows that $\nu^y=\nu$ by uniqueness in Glauberman correspondence, 
and thus $\nu$ is $H$-invariant. Then by induction on $|G|$ we obtain a choice-free bijection
$$
\irrp{H\, |\, \nu}{2}\longrightarrow \irrp{N \, |\, \theta^*}{2}\, .
$$
It is now clear that in order to finish the proof it suffices to prove the following: 

\medskip
\begin{FStep}\label{final}
There exists a choice-free correspondence
$$
f\, :\irrp{G\, |\, \theta}{2}\longrightarrow \irrp{ H \, |\, \nu}{2}\, .
$$ 
\end{FStep}

\begin{proof}[Proof of Final Step]
Recall that by Lemma \ref{glauberman} we have
$[\theta{\downarrow_{M\cap H}}, \nu]\neq 0$. Since $\nu$ is $P$-invariant
of odd degree, let $\xi\in\irr L$ be a $P$-invariant constituent of
$\nu{\downarrow_L}$. By Lemma \ref{bottom}, there exists a choice-free bijection 
$f_\xi$, depending only on $\xi$:
$$
f_\xi \, :\irrp{G\, |\, \xi}{2}\longrightarrow \irrp{H |\, \xi}{2}\, ,
$$
such that it restricts to a bijection:
$$
f_\xi \, :\irrp{G\, |\, \theta}{2}\longrightarrow \irrp{H |\, \nu}{2}\, .
$$
If $\xi^y$ is any other $P$-invariant contituent of $\nu{\downarrow_L}$, where $y\in M\cap H$,
then Lemma \ref{bottom} provides a natural correspondence
$$
f_{\xi^y}\, : \irrp{G\, |\, \xi^y}{2}\longrightarrow \irrp{H |\, \xi^y}{2}\, ,
$$
which again restricts to a bijection
$$
f_{\xi^y} \, :\irrp{G\, |\, \theta}{2}\longrightarrow \irrp{H |\, \nu}{2}\, .
$$
By Proposition \ref{natural} and the comments before it, 
for any $\chi\in\irrp{G\, |\, \theta}{2}$ we have that
$$
f_\xi(\chi)=f_\xi(\chi)^y=f_{{\xi}^y}(\chi^y)=f_{{\xi}^y}(\chi)\, .
$$
Thus, the restriction of $f_\xi$ to $\irrp{G\, |\, \theta}{2}$
is independent of $\xi$, and we obtain
a choice-free correspondence of characters $f$ as desired.
\end{proof}
\end{proof}




By Proposition \ref{natural} and the construction of the bijections leading to Theorem \ref{AMK},
it is clear that the correspondence in that theorem is preserved by both Galois action and action
by group automorphisms of $G$ stabilizing the block $B$. More precisely, assuming the notation in
Theorem \ref{AMK}, if $\sigma\in\Gal(\Q( \omega)/\Q)$, where $\omega\in\mathbb C$ is a primitive
$|G|$-th root of unity, is such that $B^\sigma=B$, then $b^\sigma=b$. Now, if $\, \hat{} \, $ is
the bijection in Theorem \ref{AMK}, then $\, \hat{} \, $ commutes with the actions of
$\langle \sigma\rangle$ on $\irr{B}$ and $\irr{b}$:
$$
{\hat\chi}^\sigma=\widehat{{\chi}^\sigma}\, ,
$$
for any $\chi\in\irr{B}$ of height zero. In particular, this implies that 
the height-zero irreducible characters in $B$ have the same fields of values as the height-zero irreducible 
characters in $b$.
Similarly, if $a\in \Aut(G)$ is such that $B^a=B$, then $a$
induces a height-preserving permutation $\kappa_a$ on $\irr B$, and a  height-preserving bijection
$\tau_a\, :\irr b\longrightarrow \irr{b^a}$, where $b^a\in{\rm Bl}(\norm {G}{D^a} )$. So we see
that the bijection $\, \hat{} \, $ in Theorem \ref{AMK} makes the following diagram commutative: 

\medskip
\begin{center}
\begin{tikzpicture}
  \matrix (m) [matrix of math nodes,row sep=3em,column sep=2em,minimum width=2em] {
     {\rm Irr}_0(B) & {\rm Irr}_0(B) \\
     {\rm Irr}_0(b)  & {\rm Irr}_0(b^a) \\};
  \path[-stealth]
    (m-1-1) edge node [left] {$\, \hat{} \, $} (m-2-1)
            edge node [above] {$\kappa_a$} (m-1-2)
    (m-2-1.east|-m-2-2) edge node [below] {$\tau_a$} (m-2-2)
    (m-1-2) edge node [right] {$\, \hat{} \, $} (m-2-2)
            (m-2-1);
\end{tikzpicture}
\end{center}

\section{Symmetric groups}\label{secsym}

Let $n$ be a natural number and let $\mathfrak{S}_n$ be the symmetric group on $n$ letters. The Alperin-McKay conjecture has been verified for symmetric groups with a beautiful argument by Olsson in \cite{Olsson}.
The main goal of the first part of the present section is to prove Theorem \ref{Thm:main}. 
In particular we determine a natural bijection $\chi\mapsto\chi^*$ between $\mathrm{Irr}_0(B)$ and $\mathrm{Irr}_0(b)$, where $b$ is the Brauer correspondent of a given $2$-block $B$ of $\fS_n$ with defect group $D$. This bijection is based on the leg-lengths of hooks of partitions and it is shown to be compatible with the restriction functor in the sense that $\chi^*$ is a constituent of the restriction $\chi{\downarrow_{\norm {\mathfrak{S}_n} D }}$.

In the second part of the section (see subsection \ref{subsLi}) we investigate in more detail the restriction to $\norm {\mathfrak{S}_n} D$ of any irreducible character in $\mathrm{Irr}(B)$. In Theorem \ref{theo:restriction} we show that given any irreducible character of $\mathfrak{S}_n$ lying in $B$, there exists $\psi\in\mathrm{Irr}_0(b)$ such that $\psi$ is a constituent of $\chi{\downarrow_{\norm {\fS_{n}} D}}$. Moreover, we characterize those irreducible characters lying in $B$ whose restriction to $\norm {\fS_{n}} D$ has a unique height-zero constituent. 

\subsection{Notation and background }\label{sec:notbackSn}
We start by recalling some basic facts in the representation theory of symmetric groups. We refer the reader to \cite{James}, \cite{JK} or \cite{OlssonBook} for a more detailed account. 
A partition $\lambda=(\lambda_1\geq\lambda_2\geq\dots\geq\lambda_\ell>0)$ is a finite non-increasing sequence of positive integers. We refer to $\lambda_i$ as a part of $\lambda$. We call $\ell=\ell(\lambda)$ the length of $\lambda$ and say that $\lambda$ is a partition of $|\lambda|=\sum\lambda_i$, written $\lambda\vdash|\lambda|$. It is useful to regard the empty sequence $(\,)$ as the unique partition of $0$. The Young diagram of $\lambda$ is the set $[\lambda]=\{(i,j)\in{\mathbb N}\times{\mathbb N}\mid 1\leq i\leq\ell(\lambda),1\leq j\leq\lambda_i\}$. We orient ${\mathbb N}\times{\mathbb N}$ with the $x$-axis pointing right and the $y$-axis pointing down, in the anglo-american tradition.

The conjugate of $\lambda$ is the partition $\lambda'$ such that $[\lambda']$ is the reflection of $[\lambda]$ in the line $y=x$. So $\lambda'$ has parts 
$\lambda_i'=|\{j\mid\lambda_j\geq i\}|$ and in particular $\lambda'_1=\ell(\lambda)$. We say that a partition $\mu$ is contained in $\lambda$, written $\mu\subseteq\lambda$, if $\mu_i\leq\lambda_i$, for all $i\geq1$. When this occurs, we call the non-negative sequence $\lambda\backslash\mu=(\lambda_i-\mu_i)_{i=1}^\infty$ a skew-partition, and we call the diagram
$[\lambda\backslash\mu]=\{(i,j)\in{\mathbb N}\times{\mathbb N}\mid 1\leq i\leq\ell(\lambda),\mu_i<j\leq\lambda_i\}$ 
a skew Young diagram.

The rim of $[\lambda]$ is the collection ${\mathcal R}(\lambda)=\{(i,j)\in[\lambda]\mid (i+1,j+1)\not\in[\lambda]\}$ of nodes on its south-eastern boundary. Given $(r,c)\in[\lambda]$, the associated rim-hook is $h(r,c)=\{(i,j)\in {\mathcal R}(\lambda)\mid r\leq i,c\leq j\}$.
Then $h=h(r,c)$ contains $e:=\lambda_r-r+\lambda'_c-c+1$ nodes, in $a(h)=\lambda_r-c+1$ columns
and $\lambda'_c-r+1$ rows. We call $\ell(h)=\lambda'_c-r$ the leg-length of $h$. We refer to $h$ as an $e$-hook of $\lambda$. The integer $e$ is sometimes denoted as $|h|$. Removing $h$ from $[\lambda]$ gives the Young diagram of a partition denoted $\lambda-h$. In particular $|\lambda-h|=|\lambda|-e$ and $h$ is a skew Young diagram. 

Let $h$ be an $e$ rim-hook which has leg-length $\ell$. 
The associated hook partition of $e$ is $\hat h=(e-\ell,1^\ell)$. So $(e-\ell,1^\ell)$ coincides with its $(1,1)$ rim-hook. Also there are $e$ hook partitions ${\mathcal H}(e)=\{(e),(e-1,1),\dots,(1^e)\}$ of $e$, distinguished by their leg-lengths $0,1,\dots,e-1$.

Now fix a positive integer $e$. 
We call a partition which has no $e$-hooks an $e$-core. For example the $2$-cores are the triangular partitions $\kappa_s=(s,s-1,\dots,2,1)$ for $s\geq0$. The $e$-core of $\lambda$ is the unique $e$-core $\kappa$ which can be obtained from $\lambda$ by successively removing $e$-hooks. We call the integer $(|\lambda|-|\kappa|)/e$ the $e$-weight of $\lambda$. The set $B(\kappa,w)$ of partitions of $n$ with $e$-core $\kappa$ and weight $w$ is called an $e$-block of partitions.

Notice that the hook-lengths in a single row or column of $\lambda$ are distinct. 
Let $(r,c)\in[\lambda]$. Then $\{|h(i,j)|\ :\ (i,j)\in[\lambda],i\ne r,j\ne c\}$ is a submultiset of the hook-lengths of $\lambda-h(r,c)$. So for $m\geq1$, a partition of $e$-weight less than $2m$ can have at most one $me$ rim-hook.

Recall that the cycle type of a permutation 
 $\sigma\in\mathfrak{S}_n$ is the partition whose parts are the sizes of the orbits of $\sigma$ on $\{1,2,\dots,n\}$. Now the complex irreducible characters of $\mathfrak{S}_n$ are naturally labelled by the partitions of $n$. Given any partition $\lambda$ of $n$ we denote by $\chi^\lambda$ the corresponding irreducible character of $\mathfrak{S}_n$. The following classical result can be iterated to find the values of these characters:

\begin{lemma}[Murnaghan-Nakayama Rule]
Let $\lambda\vdash n$ and let 
$(\sigma,\tau)\in\mathfrak{S}_e\times\mathfrak{S}_{n-e}$ such that $\sigma$ is an $e$-cycle. Then
$$
\chi^\lambda(\sigma\tau)=\sum_{h}(-1)^{\ell(h)}\chi^{\lambda-h}(\tau),
$$
where $h$ runs over all $e$-hooks in $\lambda$.
\end{lemma}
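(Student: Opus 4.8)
The plan is to translate the statement into the language of symmetric functions via the Frobenius characteristic map, where the rule becomes a clean identity describing the action of a power sum on a Schur function. Recall that the characteristic map identifies the space of class functions on $\fS_n$ with the degree-$n$ component of the ring $\Lambda$ of symmetric functions, sending $\chi^\lambda$ to the Schur function $s_\lambda$, and that it is an isometry for the Hall inner product $\langle\,,\,\rangle$, under which $\langle p_\mu,p_\nu\rangle=z_\mu\delta_{\mu\nu}$ and $\langle s_\lambda,s_\mu\rangle=\delta_{\lambda\mu}$, where $p_\mu$ is a power-sum symmetric function and $z_\mu$ is the order of a centralizer. Under this identification the value of $\chi^\lambda$ on a permutation of cycle type $\rho$ is $\langle s_\lambda,p_\rho\rangle$. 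Since $\sigma\tau$ has cycle type $(e)\sqcup\rho$, where $\rho$ is the cycle type of $\tau$, its power-sum symmetric function factors as $p_e\,p_\rho$, so that
$$
\chi^\lambda(\sigma\tau)=\langle s_\lambda,\,p_e\,p_\rho\rangle .
$$

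Next I would move the factor $p_e$ across the inner product. Multiplication by $p_e$ on $\Lambda$ is adjoint, with respect to $\langle\,,\,\rangle$, to the skewing operator $p_e^{\perp}$, so that
$$
\chi^\lambda(\sigma\tau)=\langle p_e^{\perp}s_\lambda,\;p_\rho\rangle .
$$
Everything then reduces to expanding $p_e^{\perp}s_\lambda$ in the Schur basis, and the target identity is exactly the \emph{symmetric-function Murnaghan--Nakayama rule}
$$
p_e^{\perp}s_\lambda=\sum_{h}(-1)^{\ell(h)}\,s_{\lambda-h},
$$
the sum running over the $e$-rim-hooks $h$ of $\lambda$. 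Granting this, substituting it and using $\langle s_{\lambda-h},p_\rho\rangle=\chi^{\lambda-h}(\tau)$ term by term finishes the proof at once.

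The hard part will be establishing this symmetric-function identity, equivalently (by adjointness) the expansion $p_e\,s_\mu=\sum_\nu(-1)^{\ell(\nu/\mu)}s_\nu$ over partitions $\nu\supseteq\mu$ with $\nu/\mu$ an $e$-rim-hook. I would prove it with the bialternant model of Schur functions in $N$ variables, writing $s_\mu=a_{\mu+\delta}/a_\delta$ with $a_\alpha=\det(x_j^{\alpha_i})$ and $\delta=(N-1,\dots,1,0)$. Since $p_e=\sum_k x_k^e$ is symmetric, multiplying the alternant gives $p_e\,a_{\mu+\delta}=\sum_i a_{\mu+\delta+e\varepsilon_i}$, where $\varepsilon_i$ records raising the $i$-th exponent by $e$. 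Each summand $a_{\mu+\delta+e\varepsilon_i}$ is either zero (if two exponents collide) or, after sorting the exponent vector back into strictly decreasing order, equals $\pm a_{\nu+\delta}$ for a partition $\nu$. The core combinatorial point, which I would verify through the beta-number description of partitions, is that the admissible moves correspond precisely to adding an $e$-rim-hook to $\mu$, and that the sign of the sorting permutation equals $(-1)^{\ell(\nu/\mu)}$, the leg-length counting the beta-numbers that are jumped over. Dividing by $a_\delta$ then yields the stated Schur expansion. A purely representation-theoretic alternative would expand $\chi^\lambda{\downarrow_{\fS_e\times\fS_{n-e}}}$ via Littlewood--Richardson coefficients, use that an $e$-cycle has character $(-1)^k$ on the hook character $\chi^{(e-k,1^k)}$ and $0$ on all others, and then invoke the Pieri-type rule for hook shapes together with the identity $p_e=\sum_k(-1)^k s_{(e-k,1^k)}$; this reorganizes the same combinatorics but ultimately rests on the same rim-hook sign bookkeeping, which is where all the real work lies.
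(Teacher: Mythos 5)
The paper does not prove this lemma at all: it is stated as a classical result and implicitly cited to the standard references (\cite{James}, \cite{JK}, \cite{OlssonBook}), so there is no in-paper argument to compare against. Your sketch is the standard symmetric-function proof and its outline is correct: the reduction $\chi^\lambda(\sigma\tau)=\langle s_\lambda,p_ep_\rho\rangle=\langle p_e^{\perp}s_\lambda,p_\rho\rangle$ via the Frobenius characteristic and the self-adjointness conventions for $\perp$ is sound, and the identity $p_e\,s_\mu=\sum_\nu(-1)^{\ell(\nu/\mu)}s_\nu$ (sum over $e$-rim-hook additions) is exactly what is needed. You are right that all the substance sits in that last identity, and your proposed bialternant argument is the standard way to get it: $p_e\,a_{\mu+\delta}=\sum_i a_{\mu+\delta+e\varepsilon_i}$, each summand vanishing or sorting to $\pm a_{\nu+\delta}$, with the sign of the sorting permutation equal to $(-1)$ to the number of beta-numbers jumped over, which is the leg-length of the added rim-hook. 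That beta-number verification is asserted rather than carried out, so as written the proposal is a correct and well-organized reduction to a standard combinatorial lemma rather than a complete proof; to close it you would need to spell out the bijection between indices $i$ with $\mu_i+\delta_i+e$ avoiding the other exponents and partitions $\nu\supseteq\mu$ with $\nu/\mu$ an $e$-rim-hook, together with the leg-length/sign computation. Your remark that the Littlewood--Richardson alternative rests on the same rim-hook bookkeeping is also accurate.
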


Let $p$ be a prime 
integer, and let $B$ be a $p$-block of $\mathfrak{S}_n$ with associated defect group $D$ (uniquely defined up to conjugacy in $\fS_n$). 
According to the famous result of Brauer and Robinson 
(as conjectured by Nakayama, see \cite{BrauerNakayama, RobinsonNakayama}), $B=B(\kappa,w)$ for some $p$-core $\kappa$ and weight $w\geq0$. Moreover, we can choose a defect group of $B$ to be a Sylow $p$-subgroup $P_{pw}$ of $S_{pw}$. Note that $\kappa$ is the unique partition in $B(\kappa,0)$. Hence an irreducible character $\chi^\lambda$ of $\fS_n$ is in $\mathrm{Irr}(B(\kappa, w))$ if and only if the $p$-core of $\lambda$ is $\kappa$. In this case we
set $h(\lambda)$ to be the ($p$-)height of $\chi^\lambda$.

\subsection{The Alperin-McKay bijection for $\fS_n$}\label{Sec:bijection}

From now on $p=2$, $w$ is a non-negative integer, $\kappa$ is a $2$-core and $n=|\kappa|+2w$. Write $2w=2^{w_1}+\cdots +2^{w_t}$, where $w_1>w_2>\cdots >w_t>0$. Then the Young 
subgroup $\mathfrak{S}_{2^{w_1}}\times\dots\times\mathfrak{S}_{2^{w_t}}$ contains a Sylow $2$-subgroup $P_{2w}=P_{2^{w_1}}\times P_{2^{w_2}}\times\dots\times P_{2^{w_t}}$ of ${\mathfrak S}_{2w}$.

\begin{lemma}\label{L:MacdonaldOllson1}
Let $\lambda\vdash n$ have $2$-core $\kappa$ and $2$-weight $w$. Then 
$\lambda$ has height-zero if and only if there is a sequence 
$\lambda=\lambda^{(1)}\supset\lambda^{(2)}\supset\dots\supset\lambda^{(t)}\supset\lambda^{(t+1)}=\kappa$ of partitions such that $\lambda^{(i)}\backslash\lambda^{(i+1)}$ is a $2^{w_i}$ rim-hook, for $i=1,\dots,t$. Such a sequence is unique, if it exists. 

Equivalently $\lambda$ has height-zero if and only if $\lambda$ has a unique removable $2^{w_1}$ rim-hook $h$ and $\lambda-h$ has height-zero.
\end{lemma}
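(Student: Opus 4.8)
The plan is to pass from heights to $2$-adic valuations of hook lengths and then to the $2$-quotient, where the statement becomes a condition about binary carries and odd character degrees; the rim-hook reformulation is then extracted by induction on $t$.

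First I would record a closed formula for the height. Since the defect group of $B(\kappa,w)$ is a Sylow $2$-subgroup of $\fS_{2w}$, the defect is $d=v_2((2w)!)=2w-t$, where $v_2$ denotes the $2$-adic valuation and we use $s_2(2w)=s_2(w)=t$. Combining the hook-length formula $\chi^\lambda(1)=n!/\prod_{(i,j)\in[\lambda]}|h(i,j)|$ with the definition of height gives
$$
h(\lambda)=(2w-t)-\sum_{(i,j)\in[\lambda]}v_2(|h(i,j)|).
$$
So $\lambda$ has height zero exactly when $\sum_{(i,j)}v_2(|h(i,j)|)$ attains its maximal value $2w-t$.

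Next I would evaluate this sum through the $2$-quotient $(\lambda^{(0)},\lambda^{(1)})$ of $\lambda$, setting $a=|\lambda^{(0)}|$, $b=|\lambda^{(1)}|$ with $a+b=w$. The hook lengths of $\lambda$ divisible by $2$ are precisely twice the hook lengths of $\lambda^{(0)}$ and $\lambda^{(1)}$ (see \cite{JK,OlssonBook}), and there are exactly $w$ of them; hence, applying the hook-length formula again to each component,
$$
\sum_{(i,j)\in[\lambda]}v_2(|h(i,j)|)=w+\bigl(v_2(a!)-v_2(\chi^{\lambda^{(0)}}(1))\bigr)+\bigl(v_2(b!)-v_2(\chi^{\lambda^{(1)}}(1))\bigr).
$$
Substituting $v_2(a!)=a-s_2(a)$, $v_2(b!)=b-s_2(b)$ and simplifying yields the clean identity
$$
h(\lambda)=\bigl(s_2(a)+s_2(b)-s_2(w)\bigr)+v_2(\chi^{\lambda^{(0)}}(1))+v_2(\chi^{\lambda^{(1)}}(1)).
$$
All three summands are non-negative (the first is the number of carries when adding $a$ and $b$ in base $2$), so $\lambda$ has height zero if and only if (i) $a$ and $b$ have disjoint binary supports partitioning that of $w$, and (ii) both $\chi^{\lambda^{(0)}}$ and $\chi^{\lambda^{(1)}}$ have odd degree.

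Finally I would convert this quotient description into the rim-hook statement, arguing the displayed equivalence by induction on $t$; the sequence description is then obtained by iterating this one-step (``equivalently'') description, and uniqueness of the sequence follows at each stage from the observation preceding the Murnaghan--Nakayama rule that, since $w<2^{w_1}$ (take $e=2$, $m=2^{w_1-1}$), the partition has at most one $2^{w_1}$ rim-hook. The top bit $2^{w_1-1}$ of $w$ lies, by (i), in exactly one of $a,b$; using the characterization of odd-degree irreducible characters of symmetric groups as those reachable by greedily removing rim-hooks of the $2$-power sizes dictated by the binary digits of the degree (\cite{G,GKNT}), the relevant component $\lambda^{(r)}$ has a unique removable $2^{w_1-1}$ rim-hook $h'$ with $\chi^{\lambda^{(r)}-h'}$ still of odd degree. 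Under the standard correspondence between a rim-hook of length $k$ in a $2$-quotient component and a rim-hook of length $2k$ in $\lambda$, this $h'$ matches a removable $2^{w_1}$ rim-hook $h$ of $\lambda$; since $|h|$ is even, $\lambda-h$ keeps the $2$-core $\kappa$, has weight $w-2^{w_1-1}$ and $2$-quotient $(\lambda^{(r)}-h',\lambda^{(1-r)})$, whose data still satisfy (i) and (ii), so $h(\lambda-h)=0$. The converse runs the same correspondence backwards. The main obstacle is precisely this last translation step: keeping exact track of how removing the single largest $2$-power rim-hook of $\lambda$ acts on the $2$-quotient, and importing the odd-degree characterization in the form needed (existence \emph{and} uniqueness of the top rim-hook on a component, with oddness preserved after removal). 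Everything else --- the height formula, the quotient evaluation of the valuation sum, and the carry/odd-degree dichotomy --- is bookkeeping once the $e$-quotient hook-length facts are in hand.
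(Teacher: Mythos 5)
Your proof is correct, but it follows a genuinely different route from the paper's. The paper never invokes the hook-length formula or the $2$-quotient: it fixes an element $c$ whose cycle type is the partition of diagonal hook-lengths of $\kappa$, and a commuting $2$-element $d$ of cycle type $(2^{w_1},\dots,2^{w_t})$; it observes that $\chi^\lambda(cd)$ and $\chi^\lambda(c)$ have the same parity, and uses the block-invariance of the central character at $c$ together with the fact that $P_{2w}$ is a Sylow $2$-subgroup of $C_{\fS_n}(c)$ to conclude that $\lambda$ has height zero precisely when $\chi^\lambda(c)$ is odd. The Murnaghan--Nakayama expansion of $\chi^\lambda(cd)$ then produces the hook-stripping sequence directly in both directions, with uniqueness coming from the same weight bound you quote. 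Your route instead establishes the sharper Macdonald--Olsson identity $h(\lambda)=\bigl(s_2(a)+s_2(b)-s_2(w)\bigr)+v_2(\chi^{\lambda^{(0)}}(1))+v_2(\chi^{\lambda^{(1)}}(1))$ and then translates through the abacus. This buys an exact height formula and makes the subsequent count of $\mathrm{Irr}_0(B)$ immediate, but at the cost of importing the $e$-quotient hook-length multiset fact from \cite{JK,OlssonBook}, Macdonald's odd-degree characterisation in the recursive form ``unique top $2$-power rim-hook whose removal preserves oddness'' (which is indeed the form proved in \cite{G,GKNT}, so there is no circularity), and the correspondence between a $2k$ rim-hook of $\lambda$ and a $k$ rim-hook of a quotient component. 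The two points in your sketch that need care both check out: removing a rim-hook of even length preserves the $2$-core (on the abacus a bead moves along a single runner, so the runner contents are unchanged), and your size estimates $a<2^{w_1}$, $b<2^{w_1-1}$ guarantee that the unique $2^{w_1-1}$ rim-hook lives on a single component, so the translation is faithful in both directions. In short: the paper's argument is the more economical and self-contained one; yours proves more and explains \emph{why} the hook-sequence parametrisation exists, at the price of heavier combinatorial input.
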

\begin{proof}
It is clear that the last statement follows from the first.

We know that $\kappa=(s,s-1,\dots,1)$, for some $s\geq0$. 
Then the diagonal hook-lengths of $\kappa$ form a partition $\mu=(2s-1,2s-5,2s-9,\dots)$ of $|\kappa|$. Let 
$c\in{\mathfrak S}_{|\kappa|}$ have cycle type $\mu$. Then the Murnaghan-Nakayama rule implies that $\chi^\kappa(c)=(-1)^{\lfloor s^2/4\rfloor}$. Let $d\in{\mathfrak S}_{2w}$ have cycle type $(2^{w_1},2^{w_2},\dots,2^{w_t})$. So $d$ is a $2$-element which commutes with $c$. Then $\chi^\lambda(cd)$ and $\chi^\lambda(c)$ have the same parity, by standard character theory.

Character theory implies that $[\mathfrak{S}_n:C_{\mathfrak{S}_n}(c)]\chi^\lambda(c)/\chi^\lambda(1)$ is an integer, and the parity of this {\em central character} is independent of $\lambda\in B(\kappa,w)$, according to block theory. Now the defect group $P_{2w}$ of $B(\kappa,w)$ is a Sylow $2$-subgroup of $C_{\mathfrak{S}_n}(c)={\mathfrak S}_{2w}\times\langle c\rangle$. It follows that $\chi^\lambda(c)/2^{h(\lambda)}$ is an integer whose parity is independent of $\lambda$.

Suppose first that the given sequence $\lambda^{(i)}$ of partitions exists. Set $h_i=\lambda^{(i)}\backslash\lambda^{(i+1)}$. Then $\chi^{\lambda}(cd)=\prod(-1)^{\ell(h_i)}\chi^\kappa(c)$, by the Murnaghan-Nakayama rule. As $\chi^\kappa(c)$ is odd, we deduce that $\chi^\lambda(cd)$ is odd. But then $\chi^\lambda(c)$ is odd. So $\lambda$ has height $0$, by the previous paragraph.

Conversely suppose that $\chi^\lambda$ has height zero. Then $\chi^\lambda(c)$ is odd, by the previous two paragraphs (consider $\nu=(2w+s, s-1,s-2,\ldots, 1)$). 
This forces $\chi^\lambda(cd)\ne0$. So we can successively strip hooks of lengths $2^{w_1},2^{w_2},\dots$ from $\lambda$, according to the Murnaghan-Nakayama rule. Equivalently, the given sequence $\lambda^{(i)}$ of partitions exists. Moreover this sequence is unique, as $\lambda^{(i)}$ has $2$-weight strictly less than $2^{w_i}$, for $i=1,\dots t$.
\end{proof}

Following the above lemma, let $\lambda$ be a height-zero partition which has $2$-weight $w$ and associated sequence $\lambda=\lambda^{(1)}\supset\lambda^{(2)}\supset\dots\supset\lambda^{(t)}\supset\lambda^{(t+1)}=\kappa$. We call the $t$-tuple ${\mathcal H}(\lambda)=(\hat h_1,\dots,\hat h_t)$ of  hook partitions associated to $(h_1,\dots,h_t)$ the hook-sequence of $\lambda$.

Conversely, let $\gamma_1,\gamma_2,\dots,\gamma_t$ be hook partitions of $2^{w_1},2^{w_2},\dots,2^{w_t}$, respectively. By \cite[Theorem 1.1]{Besse} there is a unique partition $\mu^{(t)}\supset\kappa$ such that $\mu^{(t)}\backslash\kappa$ is a $2^{w_t}$ rim-hook associated to $\gamma_t$. Given $i>1$, suppose that we have constructed a sequence $\mu^{(i+1)}\supset\dots\supset\mu^{(t)}\supset\mu^{(t+1)}=\kappa$ of partitions such that $\mu^{(j)}\backslash\mu^{(j+1)}$ is a $2^{w_j}$ rim-hook associated to $\gamma_j$, for $j=i+1,\dots,t$, and shown that this sequence is unique. Then $\mu^{(i+1)}$ has $2$-weight $\sum_{j=i+1}^t2^{w_j}<2^{w_i}$. So $\mu^{(i+1)}$ is a $2^{w_i}$-core. Again using \cite[Theorem 1.1]{Besse}, there is a unique partition $\mu^{(i)}\supset\mu^{(i+1)}$ such that $\mu^{(i)}\backslash\mu^{(i+1)}$ is a $2^{w_i}$ rim-hook associated to $\gamma_i$. This shows that there is a unique partition $\mu=\mu^{(1)}$ which has $2$-weight $w$, $2$-core $\kappa$ and hook sequence $(\gamma_1,\dots,\gamma_t)$. In particular $\mu$ has height zero. Counting the number of hook sequences for $w$ gives:

\begin{corollary}
Let $B$ be a $2$-block of $\mathfrak{S}_n$ which has $2$-weight $w$. If $2w=\sum_{i=1}^t 2^{w_i}$ with $w_1>\dots>w_t>0$ then $B$ has $2^{w_1+\cdots+ w_t}$ height-zero irreducible characters.
\end{corollary}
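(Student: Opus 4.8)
The plan is to exhibit an explicit bijection between the height-zero irreducible characters of $B=B(\kappa,w)$ and the set of hook-sequences, and then simply to count the latter. Recall that $\op{Irr}(B)=\op{Irr}(B(\kappa,w))$ is labelled by the partitions $\lambda\vdash n$ with $2$-core $\kappa$ and $2$-weight $w$, and that $\chi^\lambda$ has height zero precisely when the chain $\lambda=\lambda^{(1)}\supset\cdots\supset\lambda^{(t+1)}=\kappa$ of Lemma \ref{L:MacdonaldOllson1} exists. By the uniqueness clause of that lemma, assigning to each height-zero $\lambda$ its hook-sequence $\mathcal{H}(\lambda)=(\hat h_1,\dots,\hat h_t)$, where $h_i=\lambda^{(i)}\backslash\lambda^{(i+1)}$, gives a well-defined map from $\op{Irr}_0(B)$ into the product $\mathcal{H}(2^{w_1})\times\cdots\times\mathcal{H}(2^{w_t})$.

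First I would verify that this map is a bijection. Injectivity is immediate from the uniqueness of the chain. Surjectivity is exactly the content of the paragraph preceding the statement: given any tuple $(\gamma_1,\dots,\gamma_t)$ of hook partitions of $2^{w_1},\dots,2^{w_t}$, repeated application of \cite[Theorem 1.1]{Besse} builds, from the inside out, a unique chain $\mu^{(t+1)}=\kappa\subset\mu^{(t)}\subset\cdots\subset\mu^{(1)}=\mu$ in which each $\mu^{(i)}\backslash\mu^{(i+1)}$ is a $2^{w_i}$ rim-hook with hook partition $\gamma_i$; the resulting $\mu$ has $2$-core $\kappa$, $2$-weight $w$ and hook-sequence $(\gamma_1,\dots,\gamma_t)$, and hence has height zero by Lemma \ref{L:MacdonaldOllson1}. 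The key point making this work is that at each stage $\mu^{(i+1)}$ has $2$-weight $\sum_{j>i}2^{w_j}<2^{w_i}$, so it is a $2^{w_i}$-core and \cite[Theorem 1.1]{Besse} applies to produce a unique rim-hook of the prescribed leg-length.

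It then remains only to count the codomain. By the discussion of hook partitions in Section~\ref{sec:notbackSn}, for each $e$ the set $\mathcal{H}(e)=\{(e),(e-1,1),\dots,(1^e)\}$ consists of exactly $e$ hook partitions, distinguished by their leg-lengths $0,1,\dots,e-1$. Hence $|\mathcal{H}(2^{w_i})|=2^{w_i}$ for each $i$, and
$$
|\op{Irr}_0(B)|=\prod_{i=1}^t|\mathcal{H}(2^{w_i})|=\prod_{i=1}^t 2^{w_i}=2^{w_1+\cdots+w_t},
$$
as claimed.

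In truth there is no serious obstacle here: all of the representation-theoretic substance is already contained in Lemma \ref{L:MacdonaldOllson1} and in the bijective construction recalled just before the statement, so the proof is a direct translation of that bijection into a product count. The only points requiring minor care are the well-definedness of $\mathcal{H}(-)$ and the bookkeeping that the $2$-weights drop below $2^{w_i}$ at each step, which guarantees the applicability of \cite[Theorem 1.1]{Besse}.
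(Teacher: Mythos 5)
Your proof is correct and follows essentially the same route as the paper: the corollary is obtained there exactly by combining the uniqueness clause of Lemma \ref{L:MacdonaldOllson1} with the inductive construction via \cite[Theorem 1.1]{Besse} in the preceding paragraph, and then counting the $\prod_i 2^{w_i}$ hook-sequences. Your write-up merely makes the bijection and the count explicit, which is exactly what the paper's one-line deduction leaves implicit.
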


\begin{example}
The $8$ height zero partitions of $9$ in $B((2,1),3)$, their hook-sequences and the sequence of leg-lengths of these hooks:
$$
\begin{array}{l|cccc}
 \lambda & (8,1) & (4,3,2) & (4,2^2,1) & (4,1^5)\\
 \hline
 {\mathcal H}(\lambda) & ((4),(2)) & ((3,1),(2)) & ((2,1^2),(2)) & ((1^4),(2))\\
 \mbox{leg-lengths} & (0,0) & (1,0) & (2,0) & (3,0)\\
 \\
 \lambda  & (6,1^3) & (4,3,1^2) & (3^2,2,1) & (2,1^7)\\
 \hline
 {\mathcal H}(\lambda) & ((4),(1^2)) & ((3,1),(1^2)) & ((2,1^2),(1^2)) & ((1^4),(1^2))\\
 \mbox{leg-lengths} & (0,1) & (1,1) & (2,1) & (3,1)
\end{array}
$$
\end{example}

\medskip

Recall that $P_2$ is cyclic of order $2$ and $P_{2^k}\cong P_{2^{k-1}}\wr C_2$ for all $k>1$. It is not hard to show that 
$\norm {\mathfrak{S}_{2^k}} {P_{2^k}}=P_{2^k}$. 
Now the odd-degree characters of $\mathfrak{S}_{2^k}$ are precisely those labelled by the hook partitions $\mathcal{H}(2^k)$ of $2^k$. Moreover, by \cite[Theorem 1.1]{G} there is a bijection between these characters and the linear characters of $P_{2^k}$; if $h$ is a hook partition of $2^k$, the corresponding character of $P_{2^k}$ is the unique linear constituent $\phi^{h}$ of $\chi^{h}{\downarrow_{P_{2^k}}}$.

As mentioned above, $P_{2w}$ is a defect group of the $2$-block $B(\kappa,w)$ of $\mathfrak{S}_{n}$. 
Now $P_{2w}$ has normalizer $N:=P_{2w}\times\fS_{|\kappa|}$ in $\mathfrak{S}_{n}$. The Brauer correspondent of $B$ is the unique $2$-block $b=b_0\times B(\kappa, 0)$ of $N$ such that $b^{\mathfrak{S}_n}=B$ in the sense of Brauer. Here $b_0$ is the unique $2$-block of the $2$-group $P_{2w}$ and $B(\kappa, 0)$ is a defect zero $2$-block of $\mathfrak{S}_{|\kappa|}$. It is easy to check that an irreducible character $\psi\times\chi^\kappa$ in $b$ has height-zero if and only if $\psi(1)=1$. Equivalently we must have
$$
\psi=\phi^{h_1}\times\phi^{h_2}\times\cdots\times\phi^{h_t},
$$
where $h_j$ is a hook partition of $2^{w_j}$, for all $j\in\{1,\ldots, t\}$.

\medskip

Let $\chi=\chi^\lambda$ be a height-zero character in $B(\kappa, w)$, and suppose that $\lambda$ has hook sequence ${\mathcal H}(\lambda)=(h_1,\ldots, h_t)$. We denote by $\chi^*$ the height-zero character in $b$ defined by
$$
\chi^*=\phi^{h_1}\times\phi^{h_2}\times\cdots\times\phi^{h_t}
\times\chi^\kappa.
$$

We are now able to prove Theorem \ref{Thm:main}. This follows from the following result. 

\begin{teo}\label{t:comb}
The map $\chi\mapsto\chi^*$ is a bijection between the height-zero characters in $B(\kappa, w)$ 
and the height-zero characters in its Brauer correspondent $b$. Moreover $\chi^*$ is a constituent of the restriction of\/ $\chi$ to $N$.   
\end{teo}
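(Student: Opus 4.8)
The plan is to derive both assertions from the combinatorial parametrisations already in place, the Littlewood--Richardson rule, and \cite[Theorem 1.1]{G}. Bijectivity of $\chi\mapsto\chi^*$ is essentially formal. First I would observe that $\lambda\mapsto\mathcal{H}(\lambda)=(\hat h_1,\dots,\hat h_t)$ is a bijection from the height-zero partitions in $B(\kappa,w)$ onto $\prod_{i=1}^t\mathcal{H}(2^{w_i})$: this is precisely the existence-and-uniqueness statement of Lemma \ref{L:MacdonaldOllson1} together with the converse construction based on \cite[Theorem 1.1]{Besse}. On the other side, the discussion preceding the theorem shows that $(h_1,\dots,h_t)\mapsto\phi^{h_1}\times\cdots\times\phi^{h_t}\times\chi^\kappa$ is a bijection from $\prod_{i=1}^t\mathcal{H}(2^{w_i})$ onto $\op{Irr}_0(b)$, with the factor maps $h_i\mapsto\phi^{h_i}$ supplied by \cite[Theorem 1.1]{G}. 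Since $\chi^\lambda\mapsto\chi^*$ is by definition the composite of these two bijections, it is a bijection; the character count in the Corollary above confirms that both sides have $2^{w_1+\cdots+w_t}$ elements.

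The substantial content is the restriction statement, which I would prove by peeling off the rim-hooks of the hook-sequence one at a time inside a Young subgroup. The key local fact to isolate is: if $\mu=\lambda-h$, where $h$ is a rim-hook of $\lambda$ of size $e$ and leg-length $\ell$, then $\chi^{\hat h}\times\chi^\mu$ is a constituent of $\chi^\lambda{\downarrow_{\fS_e\times\fS_{n-e}}}$, where $\hat h=(e-\ell,1^\ell)$. By the branching rule this multiplicity equals the Littlewood--Richardson coefficient $c^\lambda_{\hat h,\mu}$, equivalently the multiplicity of $s_{\hat h}$ in the skew (ribbon) Schur function $s_{\lambda/\mu}$, equivalently the number of Littlewood--Richardson fillings of $h$ of content $(e-\ell,1^\ell)$. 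I would show this number is exactly $1$ by exhibiting the unique such filling: put a $1$ at the top of each of the $e-\ell$ columns of $h$ and assign $2,3,\dots,\ell+1$ to the remaining $\ell$ boxes (those having a box immediately above them) in order along the rim-hook; column-strictness and the lattice-word condition force this filling and forbid any other. For the present theorem only positivity of the multiplicity is needed.

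With the local fact available I would iterate along the sequence $\lambda=\lambda^{(1)}\supset\cdots\supset\lambda^{(t+1)}=\kappa$ of Lemma \ref{L:MacdonaldOllson1}, where $\lambda^{(i)}\backslash\lambda^{(i+1)}=h_i$ is a $2^{w_i}$ rim-hook. Restricting $\chi^\lambda$ to $\fS_{2^{w_1}}\times\fS_{n-2^{w_1}}$ yields $\chi^{\hat h_1}\times\chi^{\lambda^{(2)}}$ as a constituent; restricting the second factor $\chi^{\lambda^{(2)}}$ to $\fS_{2^{w_2}}\times\fS_{n-2^{w_1}-2^{w_2}}$ yields $\chi^{\hat h_2}\times\chi^{\lambda^{(3)}}$; and so on. By transitivity of restriction this shows that $\chi^{\hat h_1}\times\cdots\times\chi^{\hat h_t}\times\chi^\kappa$ is a constituent of $\chi^\lambda{\downarrow_Y}$, where $Y=\fS_{2^{w_1}}\times\cdots\times\fS_{2^{w_t}}\times\fS_{|\kappa|}$. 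Finally I would restrict from $Y$ to $N=P_{2^{w_1}}\times\cdots\times P_{2^{w_t}}\times\fS_{|\kappa|}$: by \cite[Theorem 1.1]{G} each $\phi^{\hat h_i}$ is a (linear) constituent of $\chi^{\hat h_i}{\downarrow_{P_{2^{w_i}}}}$, so $\phi^{\hat h_1}\times\cdots\times\phi^{\hat h_t}\times\chi^\kappa=\chi^*$ is a constituent of $\chi^\lambda{\downarrow_N}$, as required.

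I expect the main obstacle to be the local rim-hook fact, and more precisely the verification that the hook content $(e-\ell,1^\ell)$ really does admit a Littlewood--Richardson filling for an arbitrary, genuinely bent rim-hook, rather than only for the two extreme shapes of a single row or a single column. Once that single-hook statement is secured, the rest is bookkeeping: the bijection is formal and the iteration is a routine application of transitivity of restriction together with \cite[Theorem 1.1]{G}.
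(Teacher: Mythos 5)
Your proposal is correct and follows essentially the same route as the paper: bijectivity is obtained as the composite of the two parametrisations via hook-sequences, and the restriction statement is proved by stripping the hooks of the hook-sequence one at a time inside a Young subgroup and then descending to the Sylow factors $P_{2^{w_i}}$ via \cite[Theorem 1.1]{G}. The single-hook fact you isolate (and propose to prove by exhibiting the unique Littlewood--Richardson filling of a ribbon with hook content) is exactly \cite[Lemma 4.1]{GKNT}, which the paper simply cites at the corresponding step.
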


\begin{proof}
Set $\mathcal{H}(w)=\mathcal{H}(2^{w_1})\times\cdots\times\mathcal{H}(2^{w_t})$. The first assertion about the bijectivity of the map follows from the discussion above. There we explicitly described two bijections. The first one between the sets
$\mathrm{Irr}_0(B(\kappa, w))$ and $\mathcal{H}(w)$, the second one between the sets $\mathcal{H}(w)$ and $\mathrm{Irr}_0(b)$. The composition of this two bijections gives the map defined by $\chi\mapsto\chi^*$. 

Let $\chi=\chi^\lambda$ where $\lambda$ is a partition of $n$. To prove the second statement we proceed by induction on $t$, the length of the $2$-adic expansion of $2w$. Suppose then that $t=1$ and $2w=2^{w_1}$. Let $h$ be the hook partition of $2w$ corresponding to the unique $2w$ rim-hook of $\lambda$. A direct application of the Littlewood-Richardson rule shows that $\chi^h\times\chi^\kappa$ is a constituent of $\chi^\lambda{\downarrow_{\fS_{2w}\times \fS_{n-2w}}}$ (see \cite[Lemma 4.1]{GKNT} for an explicit proof). It follows that $\chi^*=\phi^h\times \chi^\kappa$ is a constituent of $\chi^\lambda{\downarrow_{P_{2w}\times \fS_{n-2w}}}$.

Let now $t\geq 2$ and suppose that $\lambda$ has hook sequence ${\mathcal H}(\lambda)=(h_1,h_2,\ldots, h_t)$. Let $\mu$ be the unique height zero partition of $n-2^{w_1}$ with $2$-core $\kappa$ and hook sequence ${\mathcal H}(\mu)=(h_2,\ldots, h_t)$. In particular $\mu$ belongs to $B(\kappa,w-2^{w_1-1})$. Again by \cite[Lemma 4.1]{GKNT}, $\chi^{h_1}\times\chi^\mu$ is a constituent of $\chi^\lambda{\downarrow_{\fS_{2^{w_1}}\times\fS_{n-2^{w_1}}}}$. Clearly $(\chi^{\mu})^*
=\phi^{h_2}\times\phi^{h_3}\times\cdots\times\phi^{h_t}\times\chi^\kappa$, and by induction we have that
$$
\phi^{h_2}\times\phi^{h_3}\times\cdots\times\phi^{h_t}\times\chi^\kappa\ \ \text{is a constituent of}\ \ \ \chi^\mu{\downarrow_{P_{2^{w_2}}\times\cdots\times P_{2^{w_t}}\times\fS_{n-2w}}}.
$$
We conclude that $\chi^*$ is a constituent of $\chi^\lambda{\downarrow_{P_{2w}\times \fS_{n-2w}}}$. 
\end{proof}



We end this section by observing that the map described in Theorem \ref{t:comb} can be equivalently defined in algebraic terms only, without using combinatorics. 
This is done via repeated applications of \cite[Theorem B]{INOT}. 
Keeping the notation introduced above let $n=2w+|\gamma|$ where $2w=2^{w_1}+\cdots +2^{w_t}$, and let $\chi\in\mathrm{Irr}_{0}(B(\kappa,w))$. 
For $i\in\{1,\ldots,t\}$ we let $n_i:=n_{i-1}-2^{w_i}$, where we set $n_0=n$. 
We set $\chi_0:=\chi$, and we define a sequence of irreducible characters $(\chi_1,\ldots, \chi_t)$ as follows. 
For $i\in\{1,\ldots,t\}$ let $\chi_{i}\in\mathrm{Irr}(\fS_{n_i})$ be the unique irreducible constituent of $\chi_{i-1}\downarrow_{\fS_{n_i}}$ appearing with odd multiplicity. 
Notice that $\chi_1$ is well defined since $\chi_0$ has height zero and therefore the partition $\lambda$ of $n$ labelling $\chi_0$ has a unique removable rim $2^{w_1}$-hook $\gamma_1$, by Lemma \ref{L:MacdonaldOllson1}. Hence by \cite[Theorem B]{INOT} $\chi_0\downarrow_{\fS_{n_1}}$ has a unique irreducible constituent $\chi_1$ appearing with odd multiplicity. 
Again \cite[Theorem B]{INOT} guarantees that the partition of $n_1$ labelling $\chi_1$ is obtained by removing $\gamma_1$ from $\lambda$, hence $\chi_1$ is an irreducible character of height zero in $B(\kappa,w-2^{w_1-1})$, by Lemma \ref{L:MacdonaldOllson1}. 
Iterating this same argument $i$ times, we deduce that $\chi_i$ is well defined and uniquely determined. 

A second application of \cite[Theorem B]{INOT} implies that there exists a unique $\theta_i\in\mathrm{Irr}(\fS_{n_i}\times\fS_{2^{w_i}})$ such that $\theta_i$ lies above $\chi_i$ and such that $\frac{\theta_i(1)}{\chi_i(1)}$ is odd. Therefore $\theta_i=\chi_i\times \rho_i$ for some uniquely defined $\rho_i\in\mathrm{Irr}_{2'}(\fS_{2^{w_i}})$. 

We now denote by $\phi_i$ the unique irreducible odd-degree constituent of $\rho_i\downarrow_{P_{2^{w_i}}}$ (well defined by \cite[Theorem 1.1]{G}) and we set 
$$\chi^{**}=\phi_1\times\phi_2\times\cdots\times\phi_t\times\chi^\kappa.$$ 

\begin{teo}\label{t:nocomb}
The map $\chi\mapsto\chi^{**}$ coincides with the map constructed in Theorem \ref{t:comb}.
\end{teo}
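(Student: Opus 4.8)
The plan is to show that the two maps $\chi \mapsto \chi^*$ and $\chi \mapsto \chi^{**}$ agree by proving that the algebraically-defined characters $\phi_i$ coincide with the combinatorially-defined linear characters $\phi^{h_i}$ of $P_{2^{w_i}}$ for each $i$. Since both maps share the trailing factor $\chi^\kappa$ and have the same product structure across the $t$ tensor factors, it suffices to verify that the hook partition governing the $i$-th factor in the definition of $\chi^{**}$ is exactly the hook partition $h_i$ appearing in the hook-sequence $\mathcal{H}(\lambda) = (h_1, \ldots, h_t)$ of Theorem~\ref{t:comb}. This reduces the whole statement to a single identification: that $\rho_i \in \op{Irr}_{2'}(\fS_{2^{w_i}})$ is the odd-degree character $\chi^{h_i}$.

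First I would set up the induction on $t$ that mirrors the proof of Theorem~\ref{t:comb}. The base case $t=1$ is the crux of the matching. Here $\lambda$ has a unique removable $2^{w_1}$-rim-hook $h_1$ by Lemma~\ref{L:MacdonaldOllson1}, so $\chi_1 = \chi^{\lambda - h_1} = \chi^\kappa$. By \cite[Theorem B]{INOT} the unique character $\theta_1 = \chi_1 \times \rho_1$ lying above $\chi_1$ with $\theta_1(1)/\chi_1(1)$ odd is determined, and I must show $\rho_1 = \chi^{h_1}$. The key point is that $[\chi^\lambda{\downarrow_{\fS_{2w}\times\fS_{n-2w}}}, \chi^{h_1}\times\chi^\kappa] \neq 0$ by the Littlewood-Richardson computation already invoked in Theorem~\ref{t:comb} (via \cite[Lemma 4.1]{GKNT}), together with the fact that $\chi^{h_1}$ has odd degree. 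Because $\theta_1 = \chi_1 \times \rho_1$ is the \emph{unique} constituent above $\chi^\kappa$ with odd index, and $\chi^{h_1} \times \chi^\kappa$ is such a constituent, uniqueness forces $\rho_1 = \chi^{h_1}$. Then $\phi_1$, the unique odd-degree constituent of $\rho_1{\downarrow_{P_{2^{w_1}}}} = \chi^{h_1}{\downarrow_{P_{2^{w_1}}}}$, equals $\phi^{h_1}$ by \cite[Theorem 1.1]{G}, exactly as in the definition of $\chi^*$.

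For the inductive step with $t \geq 2$, I would apply the same reasoning to extract $h_1$ from the first restriction, obtaining $\rho_1 = \chi^{h_1}$ and hence $\phi_1 = \phi^{h_1}$, while $\chi_1 = \chi^{\lambda - h_1} = \chi^\mu$ where $\mu$ is the height-zero partition of $n - 2^{w_1}$ in $B(\kappa, w - 2^{w_1-1})$ with hook-sequence $(h_2, \ldots, h_t)$, again by Lemma~\ref{L:MacdonaldOllson1} and \cite[Theorem B]{INOT}. The characters $\chi_2, \ldots, \chi_t$ and $\rho_2, \ldots, \rho_t$ produced by the algebraic recipe applied to $\chi = \chi^\lambda$ are literally the same as those produced by applying the recipe to $\chi^\mu$ (since the construction only depends on successive restrictions, and $\chi_1 = \chi^\mu$ is where the recursion continues). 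By the inductive hypothesis applied to $\chi^\mu$, the remaining factors satisfy $\phi_j = \phi^{h_j}$ for $j = 2, \ldots, t$, and so $\chi^{**} = \phi^{h_1} \times \cdots \times \phi^{h_t} \times \chi^\kappa = \chi^*$.

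The main obstacle I anticipate is rigorously aligning the two uniqueness statements in the base case: I must confirm that the character $\chi^{h_1} \times \chi^\kappa$ singled out by the Littlewood-Richardson/\cite[Lemma 4.1]{GKNT} argument is genuinely the \emph{same} object as the $\theta_1 = \chi_1 \times \rho_1$ isolated by \cite[Theorem B]{INOT}, rather than merely a constituent of the right restriction. This requires checking that \cite[Theorem B]{INOT} applies in the form $\fS_{n_i} \times \fS_{2^{w_i}} \leq \fS_n$ and that its odd-multiplicity conclusion pins down a unique tensor factor $\rho_i$; the combinatorial description of $\chi_i$ as $\chi^{\lambda - h_1 - \cdots - h_i}$ via Lemma~\ref{L:MacdonaldOllson1} is what makes the two pinned-down characters coincide. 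Once this matching is secured, the rest is a routine unwinding of the recursive definitions.
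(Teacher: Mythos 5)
Your proposal is correct and follows essentially the same route as the paper: repeated applications of \cite[Theorem B]{INOT} together with Lemma \ref{L:MacdonaldOllson1} to identify each $\chi_i$ with $\chi^{\lambda_i}$ and each $\rho_i$ with $\chi^{h_i}$, whence $\phi_i=\phi^{h_i}$ and $\chi^{**}=\chi^*$. Your extra step of pinning down $\rho_i=\chi^{h_i}$ by matching the constituent exhibited via \cite[Lemma 4.1]{GKNT} against the uniqueness in \cite[Theorem B]{INOT} is a reasonable (and slightly more explicit) way of justifying what the paper simply reads off from that theorem.
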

\begin{proof}
Repeated applications of \cite[Theorem B]{INOT} and of Lemma \ref{L:MacdonaldOllson1} show that for all $i\in\{1,\ldots, t\}$ we have that $\chi_i=\chi^{\lambda_i}$ where $\lambda_i$ is the partition of $n_i$ obtained by removing from $\lambda_{i-1}$ the unique $2^{w_i}$-rim hook $\gamma_i$. Moreover $\rho_i=\chi^{h_i}$ where $h_i$ is the hook partition of $2^{w_i}$ associated to $\gamma_i$.
We conclude that $\phi_i=\phi^{h_i}$ and therefore that $\chi^{**}=\chi^*$.
\end{proof}

\subsection{Restriction to the normaliser}\label{subsLi}

We continue with the notation that $\kappa$ is a $2$-core of $n-2w$. However we now assume that $B=B(\kappa,w)$ is a non-principal $2$-block of ${\mathfrak S}_n$. So $\kappa=(s,s-1,\dots,1)$ with $s>1$. Also $P_{2w}$ is a defect group of $B$, $N=P_{2w}\times{\mathfrak S}_{n-2w}$ is the normalizer of $P_{2w}$ in ${\mathfrak S}_n$ and $b$ is the $2$-block of $N$ that is the Brauer correspondent of $B(\kappa,w)$. In Section \ref{Sec:bijection} we have seen that there exists a bijection $\chi\mapsto\chi^*$ between the sets $\mathrm{Irr}_0(B)$ and $\mathrm{Irr}_0(b)$. We have also shown that $\chi^*$ is a constituent of the restriction of $\chi$ to $N$. 

\begin{center}
\noindent\textbf{Question.} Is $\chi^*$ the only constituent of $\chi{\downarrow_N}$ lying in $\mathrm{Irr}_0(b)$?
\end{center}

In \cite[Theorem 1.2]{G} the first author addressed the above question in the case where $B$ is the principal block of $\fS_n$. In particular the following statement holds:

\begin{proposition}\label{theo1.2:G}
Let $\chi\in\mathrm{Irr}(\fS_n)$, and let $P_n$ be a Sylow $2$-subgroup of $\fS_n$. 
\begin{enumerate}
\item[(i)] The restriction of\/ $\chi$ to $P_n$ has a linear
constituent. 
\item[(ii)] The restriction of\/ $\chi$ to $P_n$ has a unique linear constituent if and only
if\/ $\chi(1)=1$ or $\chi(1)$ is odd and $n$ is a power of\/ $2$.
\end{enumerate}
In particular, if\/ $\chi(1)$ is even then its restriction to $P_n$ has at least two linear constituents. 
\end{proposition}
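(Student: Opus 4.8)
The plan is to establish (i) and (ii) by induction on $n$; the concluding ``in particular'' assertion then follows formally, since for even $\chi(1)$ neither alternative in (ii) holds, so the restriction is not a single linear character, while (i) guarantees it is nonempty, forcing at least two linear constituents. Throughout I would use the two structural descriptions of $P_n$ recalled before the statement: the direct factorisation $P_n=P_{2^{a_1}}\times\cdots\times P_{2^{a_r}}$ attached to the $2$-adic expansion $n=2^{a_1}+\cdots+2^{a_r}$, and the wreath recursion $P_{2^k}\cong P_{2^{k-1}}\wr C_2$.

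First I would dispose of (i) whenever $n$ is not a power of $2$, i.e. $r\geq 2$. Here every factor $P_{2^{a_i}}$ lies in the proper Young subgroup $Y=\fS_{2^{a_1}}\times\cdots\times\fS_{2^{a_r}}$. Picking any irreducible constituent $\chi^{\mu^{(1)}}\times\cdots\times\chi^{\mu^{(r)}}$ of the nonzero character $\chi{\downarrow_Y}$ and restricting further to $P_n\subseteq Y$ gives the tensor product $\chi^{\mu^{(1)}}{\downarrow_{P_{2^{a_1}}}}\times\cdots\times\chi^{\mu^{(r)}}{\downarrow_{P_{2^{a_r}}}}$. Since each $2^{a_i}<n$, the inductive hypothesis supplies a linear constituent $\Lambda_i$ of $\chi^{\mu^{(i)}}{\downarrow_{P_{2^{a_i}}}}$, and $\Lambda_1\times\cdots\times\Lambda_r$ is then a linear constituent of $\chi{\downarrow_{P_n}}$. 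For $n=2^k$ and $\chi=\chi^\lambda$ of odd degree, i.e. $\lambda$ a hook, \cite[Theorem 1.1]{G} gives a \emph{unique} linear constituent; this also covers $\chi(1)=1$. Hence the only outstanding case for (i) is an even-degree character of $\fS_{2^k}$.

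To treat even-degree characters on $\fS_{2^k}$ I would pass to the base group $Q\times Q\unlhd P_{2^k}$, where $Q=P_{2^{k-1}}$ and $P_{2^k}=(Q\times Q)\rtimes C_2$. A linear character of $P_{2^k}$ restricts to a $C_2$-stable, hence diagonal, character $\ell\times\ell$ of $Q\times Q$, and conversely each such $\ell\times\ell$ extends to exactly the two linear characters of $P_{2^k}$ lying over it; thus the linear constituents of $\chi{\downarrow_{P_{2^k}}}$ are controlled by the diagonal multiplicities. Expanding $\chi^\lambda{\downarrow_{\fS_{2^{k-1}}\times\fS_{2^{k-1}}}}=\sum_{\mu,\nu}c^\lambda_{\mu\nu}\,\chi^\mu\times\chi^\nu$ by the Littlewood--Richardson rule and restricting each factor to $Q$ via the inductive hypothesis yields $[\chi^\lambda{\downarrow_{Q\times Q}},\ell\times\ell]=\sum_{\mu,\nu}c^\lambda_{\mu\nu}[\chi^\mu{\downarrow_Q},\ell][\chi^\nu{\downarrow_Q},\ell]$. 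I would argue that at least one of these diagonal multiplicities is nonzero, which gives (i), and that for even-degree $\chi$ the resulting totals force at least two distinct linear characters of $P_{2^k}$ to occur, which gives the even-degree half of (ii).

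Finally, for (ii) I would track the same multiplicities through both recursions. The implication $\chi(1)=1\Rightarrow$ uniqueness is immediate, and \cite[Theorem 1.1]{G} gives uniqueness for odd-degree $\chi$ on $\fS_{2^k}$. The reverse implications reduce to the bound ``at least two linear constituents'' in the remaining cases: even-degree $\chi$ on any $\fS_n$, and $\chi$ of odd degree greater than $1$ when $n$ is not a power of $2$. For the latter I expect the mechanism to be that restriction along $Y$ yields either two distinct odd-degree tensor constituents with distinct associated linear characters, or a tensor constituent with an even-degree factor, either of which produces two linear constituents through the direct-product structure. I anticipate that the genuine obstacle lies exactly in this lower bound of two for $\fS_{2^k}$: pinning down which diagonal characters $\ell\times\ell$ survive in $\chi{\downarrow_{Q\times Q}}$ demands a careful analysis of the coefficients $c^\lambda_{\mu\nu}$ together with the inductive description of the numbers $[\chi^\mu{\downarrow_Q},\ell]$, and one must in particular rule out, whenever $\chi(1)>1$, the degenerate scenario in which $\chi{\downarrow_Y}$ is irreducible with all tensor factors of odd degree, since that would spuriously yield a single linear constituent.
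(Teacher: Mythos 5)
The paper does not prove this Proposition at all: it is quoted verbatim from \cite[Theorem 1.2]{G} (the first author's earlier paper on odd-degree characters of symmetric groups) and used as a black box, so there is no in-paper argument to compare against. Judged on its own terms, your proposal is a reasonable outline that matches the general strategy one would expect (induction on $n$, the factorisation $P_n=P_{2^{a_1}}\times\cdots\times P_{2^{a_r}}$, the wreath recursion $P_{2^k}\cong P_{2^{k-1}}\wr C_2$, and Littlewood--Richardson for the restriction to $\fS_{2^{k-1}}\times\fS_{2^{k-1}}$), and the reductions you carry out correctly --- the Clifford-theoretic observation that linear constituents of $\chi{\downarrow_{P_{2^k}}}$ correspond exactly to diagonal constituents $\ell\times\ell$ of $\chi{\downarrow_{Q\times Q}}$, each such $\ell\times\ell$ having precisely two linear extensions, is sound. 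But the argument stops exactly where the result becomes nontrivial, and you say so yourself.

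Concretely, three steps are asserted rather than proved. First, for $\lambda\vdash 2^k$ you need some pair $(\mu,\nu)$ with $c^\lambda_{\mu\nu}>0$ such that $\chi^\mu{\downarrow_Q}$ and $\chi^\nu{\downarrow_Q}$ share a linear constituent (e.g.\ some $\mu$ with $c^\lambda_{\mu\mu}>0$); this is not automatic and is the substance of part (i) in the hardest case. Second, for even-degree $\chi$ on $\fS_{2^k}$, a large diagonal multiplicity $[\chi{\downarrow_{Q\times Q}},\ell\times\ell]=a+b$ does not by itself force both extensions of $\ell\times\ell$ to occur ($a$ could be the whole sum), so ``the resulting totals force at least two distinct linear characters'' needs an actual mechanism --- either two distinct diagonal characters $\ell\times\ell\ne\ell'\times\ell'$ occurring, or a parity/character-value argument of the kind this paper itself deploys in Lemma \ref{Lem(-2,2)} (evaluating at a $2^k$-cycle) to separate the two extensions. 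Third, for odd degree $>1$ with $n$ not a power of $2$, the claimed dichotomy (``two distinct odd-degree tensor constituents with distinct associated linear characters, or a tensor constituent with an even-degree factor'') is plausible but unjustified; in particular you must rule out $\chi{\downarrow_Y}$ being a multiple of a single product of hook characters, and also handle an even-degree $\chi$ whose restriction to $Y$ consists entirely of products of hooks but could conceivably all point to the same linear character. As it stands the proposal is a programme with the decisive combinatorial input missing, which is precisely the content of \cite[Theorem 1.2]{G}.
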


In this section we will prove the following statement. 

\begin{teo}\label{theo:restriction}
For each irreducible character $\chi$ in $B$.
\begin{itemize}
\item[(i)] The restriction of\/ $\chi$ to $N$ has an irreducible constituent in $\mathrm{Irr}_0(b)$. 
\item[(ii)] The restriction of\/ $\chi$ to $N$ has a unique irreducible constituent in $\mathrm{Irr}_0(b)$ if and only if\/ $\chi=\chi^\lambda$ where $\lambda$ is either the maximal or the minimal partition in $B(\kappa,w)$
(with respect to the dominance order). 
\end{itemize}
In particular, if $\chi$ has positive height then its restriction to $N$ has at least two height-zero constituents lying in $b$. 
\end{teo}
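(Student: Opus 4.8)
The plan is to reduce the whole statement to counting the linear constituents of a single skew character. Since $N=P_{2w}\times\mathfrak{S}_{n-2w}$ and $\mathrm{Irr}_0(b)$ consists precisely of the characters $\psi\times\chi^\kappa$ with $\psi$ linear on $P_{2w}$, and since $\chi^\kappa$ is the only irreducible character of $\mathfrak{S}_{n-2w}=\mathfrak{S}_{|\kappa|}$ lying in $B(\kappa,0)$, only the $\chi^\kappa$-isotypic part (in the second factor) of $\chi^\lambda{\downarrow_{\mathfrak{S}_{2w}\times\mathfrak{S}_{|\kappa|}}}$ is relevant. By the Littlewood--Richardson rule this part is $\chi^{\lambda\backslash\kappa}\times\chi^\kappa$, where $\chi^{\lambda\backslash\kappa}$ is the skew character of $\mathfrak{S}_{2w}$ determined by $[\chi^{\lambda\backslash\kappa},\chi^\alpha]=[\chi^\lambda{\downarrow_{\mathfrak{S}_{2w}\times\mathfrak{S}_{|\kappa|}}},\chi^\alpha\times\chi^\kappa]$ for $\alpha\vdash 2w$. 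Restricting the first factor to $P_{2w}$ gives $[\chi^\lambda{\downarrow_N},\psi\times\chi^\kappa]=[\chi^{\lambda\backslash\kappa}{\downarrow_{P_{2w}}},\psi]$ for every linear $\psi$, so the constituents of $\chi^\lambda{\downarrow_N}$ in $\mathrm{Irr}_0(b)$ correspond bijectively to the distinct linear constituents of $\chi^{\lambda\backslash\kappa}{\downarrow_{P_{2w}}}$, and the theorem becomes a question about this restriction. For (i), building $\lambda$ from its $2$-core by successively adding rim $2$-hooks shows $\kappa\subseteq\lambda$, so $\chi^{\lambda\backslash\kappa}\ne0$; any irreducible constituent $\chi^\alpha$ of it has a linear constituent on restriction to $P_{2w}$ by Proposition \ref{theo1.2:G}(i) (note that $P_{2w}$ is a Sylow $2$-subgroup of $\mathfrak{S}_{2w}$, since the Young subgroup $\prod_i\mathfrak{S}_{2^{w_i}}$ has odd index), and this furnishes a constituent of $\chi^\lambda{\downarrow_N}$ in $\mathrm{Irr}_0(b)$.

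For (ii) I would first locate the dominance-extremal partitions. The dominance maximum of $B(\kappa,w)$ is $\lambda^{+}=(\kappa_1+2w,\kappa_2,\dots,\kappa_s)$, for which $\lambda^{+}\backslash\kappa$ is a single connected row; since $\kappa'=\kappa$, the minimum is $\lambda^{-}=(\lambda^{+})'$, for which $\lambda^{-}\backslash\kappa$ is a single connected column. A direct check (only the first row, respectively column, can carry all of $\lambda\backslash\kappa$ while preserving the partition shape) shows these are the only members of $B(\kappa,w)$ whose skew shape is a single row or column. In either case $\chi^{\lambda\backslash\kappa}$ equals $\chi^{(2w)}$ or $\chi^{(1^{2w})}$, whose restriction to $P_{2w}$ is the trivial, respectively the sign, character, and so has a unique linear constituent. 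This settles the ``extremal implies unique'' direction of (ii).

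For the converse I would prove that if $\lambda\backslash\kappa$ is neither a single row nor a single column, then $\chi^{\lambda\backslash\kappa}$ admits an irreducible constituent $\chi^\alpha$ of even degree. Granting this, Proposition \ref{theo1.2:G}(ii), applied inside $\mathfrak{S}_{2w}$, shows that $\chi^\alpha{\downarrow_{P_{2w}}}$ already has at least two distinct linear constituents; since restriction is additive these are linear constituents of $\chi^{\lambda\backslash\kappa}{\downarrow_{P_{2w}}}$, so $\chi^\lambda{\downarrow_N}$ has at least two constituents in $\mathrm{Irr}_0(b)$. Combined with the previous paragraph this proves (ii). The final assertion then follows immediately: both $\lambda^{+}$ and $\lambda^{-}$ have height zero (strip the unique rim $2^{w_1}$-hook from the long row, respectively column, and iterate, using Lemma \ref{L:MacdonaldOllson1}), so a character of positive height is never dominance-extremal and hence, by (ii), restricts to $N$ with at least two height-zero constituents.

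The main obstacle is the even-degree claim underlying the converse of (ii): that a domino-tileable skew shape $\lambda\backslash\kappa$ with staircase $2$-core which is not a single row or column contributes an even-degree irreducible to $\chi^{\lambda\backslash\kappa}$. I expect to prove this by induction on $w$, restricting $\chi^{\lambda\backslash\kappa}$ to $\mathfrak{S}_{2^{w_1}}\times\mathfrak{S}_{2w-2^{w_1}}$ and exploiting that $\lambda$ has at most one rim $2^{w_1}$-hook (its $2$-weight $w$ is smaller than $2^{w_1}$) to control the top factor; if in some configuration the even-degree constituent fails to materialise, the fallback is to exhibit two distinct odd-degree constituents whose associated linear characters differ. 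The delicate point in either route is guaranteeing that the two linear constituents so produced are genuinely distinct rather than accidentally equal.
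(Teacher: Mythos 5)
Your reduction to counting the linear constituents of $\chi^{\lambda\backslash\kappa}{\downarrow_{P_{2w}}}$ is exactly the reduction the paper uses (implicitly, via constituents of the form $\chi^\alpha\times\chi^\kappa$), and your treatment of part (i) and of the ``extremal implies unique'' direction of (ii) is sound. The gap is in the converse of (ii): the claim you propose to prove --- that whenever $\lambda\backslash\kappa$ is neither a single row nor a single column, $\chi^{\lambda\backslash\kappa}$ has an even-degree irreducible constituent --- is false. Take $2w=2^{w_1}$ a power of $2$ and $[\lambda\backslash\kappa]$ the disjoint union of a row of length $m$ and a column of length $2w-m$ (e.g.\ $\kappa=(2,1)$, $\lambda=(4,1,1,1)$). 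By Pieri, $\chi^{\lambda\backslash\kappa}=\chi^{(m+1,1^{2w-m-1})}+\chi^{(m,1^{2w-m})}$ is a sum of two hooks of $2^{w_1}$, and every hook of a $2$-power has odd degree, so no even-degree constituent exists. Your induction on $w$ cannot manufacture one. This is precisely the case the paper isolates as Proposition \ref{P:main}(i), where it gets \emph{exactly} two height-zero constituents by a different mechanism: the two hooks are distinct, and distinct hooks of $2^{w_1}$ yield distinct linear characters of $P_{2^{w_1}}$ because $h\mapsto\phi^h$ is a bijection (\cite[Theorem 1.1]{G}). Your ``fallback'' is the right idea for this case, but you leave its delicate point (distinctness of the two linear characters) unresolved, whereas it is exactly the content needed and is supplied by the injectivity of that bijection.

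Two further remarks. First, your framing in terms of even degree is also unnecessarily strong away from the exceptional case: when $2w$ is not a power of $2$, Proposition \ref{theo1.2:G}(ii) already gives at least two linear constituents for \emph{any} $\chi^\alpha$ with $\alpha\ne(2w),(1^{2w})$, odd degree or not; so there what you actually need is just one constituent other than $(2w)$ and $(1^{2w})$, which the paper extracts from \cite[Lemma 4.4]{BesKle} (a skew character is irreducible only if its shape is a partition or a rotated partition, impossible here since $\kappa$ is triangular with $s>1$) together with McNamara's two explicit fillings of types ${\rm rows}(\lambda\backslash\kappa)$ and ${\rm cols}(\lambda\backslash\kappa)^t$. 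Second, note that the standing hypothesis that $B$ is non-principal ($s>1$) is essential and must enter your argument somewhere: for the principal block with $n$ a power of $2$, a non-linear hook $\lambda$ has $\chi^{\lambda\backslash\kappa}=\chi^\lambda$ irreducible of odd degree with a unique linear constituent, so the ``only if'' direction fails there; your proposed induction does not visibly use $s>1$, which is another sign that the even-degree claim cannot be proved as stated.
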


The proof will be given in a series of results, culminating in Proposition \ref{P:main}. First recall the following important definition and rule. 

\begin{definition}
Let ${\mathcal A}=a_1,\dots,a_k$ be a sequence of positive integers. The type of $\mathcal{A}$ is the sequence of non-negative integers $m_1,m_2,\dots$ where $m_i$ is the number of occurrences of $i$ in $a_1,\dots,a_k$. We say that $\mathcal{A}$ is a \textit{reverse lattice sequence} if the type of its prefix $a_1,\dots,a_j$ is a partition, for all $j\geq1$. Equivalently, for each $j=1,\dots,k$ and $i\geq2$
$$
|\{u\mid 1\leq u\leq j, a_u=i-1\}|\geq |\{v\mid 1\leq v\leq j, a_v=i\}|.
$$
\end{definition}

Let $\alpha\vdash n$ and $\beta\vdash m$ be partitions. The outer tensor product $\chi^\alpha\times\chi^\beta$ is an irreducible character of ${\mathfrak S}_n\times{\mathfrak S}_m$. Inducing this character to ${\mathfrak S}_{n+m}$ we may write
$$
(\chi^\alpha\times\chi^\beta){\uparrow^{{\mathfrak S}_{n+m}}}=\sum_{\gamma\vdash(n+m)}c_{\alpha,\beta}^\gamma\chi^\gamma.
$$
The \textit{Littlewood-Richardson rule} asserts that $c_{\alpha,\beta}^\gamma$ is zero if $\alpha\not\subseteq\gamma$ and otherwise equals the number of ways to replace the nodes of the diagram $[\gamma\backslash\alpha]$ by natural numbers such that 
\begin{enumerate}
\item The numbers are weakly increasing along rows.
\item The numbers are strictly increasing down the columns.
\item The sequence obtained by reading the numbers from right to left and top to bottom is a reverse lattice sequence of type $\beta$.
\end{enumerate}
We call any such configuration a Littlewood-Richardson filling of $[\gamma\backslash\alpha]$.

Recall that a partition $\alpha$ dominates a partition $\beta$, written $\beta\preceq\alpha$, if $\sum_{j=1}^i\beta_j\leq\sum_{j=1}^i\alpha_j$, for all $i\geq1$. We will use $\lambda^0$ and $\lambda^1$ to denote the most dominant and least dominant partitions in $B(\kappa,w)$, respectively. So $\lambda^0$ and $\lambda^1$ are obtained by wrapping a horizontal, respectively a vertical $2w$ rim-hook onto $\kappa$. In particular $\lambda^0$ and $\lambda^1$ have height zero. 

\begin{lemma}
The following hold:
\begin{itemize}
\item[(i)] $\chi^{\lambda^0}$ is the unique irreducible $B(\kappa,w)$-constituent of $(\chi^{(2w)}\times\chi^\kappa){\uparrow^{{\mathfrak S}_n}}$ and $\chi^{\lambda^1}$ is the unique irreducible $B(\kappa,w)$-constituent of $(\chi^{(1^{2w})}\times\chi^\kappa){\uparrow^{{\mathfrak S}_n}}$.
\item[(ii)] $(\chi^{\lambda}){\downarrow_N}$ has a unique height zero irreducible constituent in $b$, for $\lambda=\lambda^0,\lambda^1$.
\end{itemize}
\begin{proof}
To prove (i), let $\lambda\in B(\kappa,w)$ such that $c_{(2w),\kappa}^\lambda\ne0$. Then $\lambda$ has $2$-core $\kappa$, and no two nodes in $[\lambda\backslash\kappa]$ belong to the same column, using the Littlewood-Richardson rule. However $[\lambda\backslash\kappa]$ is a union of 2-hooks. As $\kappa$ is triangular, this forces $[\lambda\backslash\kappa]$ to be a single row. So $\lambda=\lambda^0$ and $c_{(2w),\kappa}^\lambda=1$. The proof for $(\chi^\kappa\times\chi^{(1^{2w})}){\uparrow^{{\mathfrak S}_n}}$ is similar, and we omit it.

To prove (ii) let's first assume that $\lambda=\lambda^0$. Then $[\lambda\backslash\kappa]$ is a row. Using the Littlewood-Richardson rule, $\chi^\alpha\times\chi^\kappa$ is an irreducible constituent of the restriction of $\chi^\lambda$ to ${\mathfrak S}_{2w}\times{\mathfrak S}_{n-2w}$ if and only if $\alpha=(2w)$. Moreover, $\chi^{(2w)}\times\chi^\kappa$ occurs with multiplicity $1$ in the restricted character. It follows that the only constituent of $\chi^\lambda{\downarrow_N}$ lying in $b$ is
$$
\phi^{(2^{w_1})}\times \phi^{(2^{w_2})}\times \cdots\times\phi^{(2^{w_t})}\times \chi^\kappa.
$$
Moreover, this character appears with multiplicity $1$. A completely similar argument (replacing $(2w)$ with $(1^{2w})$) covers the case $\lambda=\lambda^1$. 
\end{proof}
\end{lemma}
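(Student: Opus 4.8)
The plan is to prove both parts by a direct appeal to the Littlewood--Richardson rule, exploiting that $[\lambda^0\backslash\kappa]$ is a single row and $[\lambda^1\backslash\kappa]$ a single column sitting on the self-conjugate staircase $\kappa=(s,s-1,\dots,1)$. For part (i), I would first use Frobenius reciprocity to identify the irreducible constituents of $(\chi^{(2w)}\times\chi^\kappa){\uparrow^{\mathfrak{S}_n}}$ as those $\chi^\lambda$ with $c^\lambda_{(2w),\kappa}\ne0$. By the symmetry $c^\lambda_{(2w),\kappa}=c^\lambda_{\kappa,(2w)}$, this coefficient counts Littlewood--Richardson fillings of $[\lambda\backslash\kappa]$ of type $(2w)$; since $(2w)$ forces every entry to equal $1$, the strict-increase-down-columns condition says that $[\lambda\backslash\kappa]$ has at most one node in each column, i.e.\ it is a horizontal strip. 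Now restricting to $\lambda\in B(\kappa,w)$, the skew diagram $[\lambda\backslash\kappa]$ is a disjoint union of $w$ dominoes, namely the $2$-hooks stripped off in reducing $\lambda$ to its $2$-core. A horizontal strip admits no vertical domino, so every domino lies within a single row; combined with the staircase shape of $\kappa$ this will force all added nodes into the first row, so $\lambda=\lambda^0$ and the coefficient equals $1$. The statement for $\lambda^1$ then follows by conjugating partitions (which fixes $\kappa$ and interchanges $\lambda^0$ with $\lambda^1$), replacing $(2w)$ by $(1^{2w})$, horizontal strips by vertical strips, and rows by columns.

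For part (ii), recall that the height-zero characters of $b=b_0\times B(\kappa,0)$ are exactly the $\psi\times\chi^\kappa$ with $\psi$ a linear character of $P_{2w}$, since $B(\kappa,0)=\{\chi^\kappa\}$. Thus an irreducible constituent of $\chi^{\lambda^0}{\downarrow_N}$ can lie in $b$ only if its $\mathfrak{S}_{|\kappa|}$-component is $\chi^\kappa$; otherwise that component lies in a block of $\mathfrak{S}_{|\kappa|}$ distinct from $B(\kappa,0)$, and the corresponding character of $N$ lies outside $b$. I would therefore compute $\chi^{\lambda^0}{\downarrow_{\mathfrak{S}_{2w}\times\mathfrak{S}_{|\kappa|}}}$ and isolate the constituents of the form $\chi^\alpha\times\chi^\kappa$: their multiplicity is $c^{\lambda^0}_{\alpha,\kappa}$, counting Littlewood--Richardson fillings of the single row $[\lambda^0\backslash\kappa]$ of type $\alpha$. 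Reading such a row from right to left gives a weakly decreasing word, and the reverse-lattice condition forces its largest entry to be $1$; hence $\alpha=(2w)$ and the multiplicity is $1$. Finally $\chi^{(2w)}$ is the trivial character of $\mathfrak{S}_{2w}$, whose restriction to $P_{2w}$ is the trivial character $\phi^{(2^{w_1})}\times\cdots\times\phi^{(2^{w_t})}$, a single linear character. Restricting down to $N$ then shows that $\chi^{\lambda^0}{\downarrow_N}$ has the unique height-zero $b$-constituent $\phi^{(2^{w_1})}\times\cdots\times\phi^{(2^{w_t})}\times\chi^\kappa$, occurring once; the case $\lambda=\lambda^1$ is identical with the sign character of $\mathfrak{S}_{2w}$ in place of the trivial one.

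The main obstacle I anticipate is the combinatorial step in part (i): arguing rigorously that a domino-tileable horizontal strip on a staircase core can only be a single row. I would make this precise by writing $\lambda_i=\kappa_i+2a_i$, with evenness coming from the horizontal dominoes confined to each row, and then reading off from the partition inequalities $\lambda_i\ge\lambda_{i+1}$ together with the horizontal-strip inequalities $\lambda_{i+1}\le\kappa_i$ that $a_i=0$ for all $i\ge2$, so that all $2w$ added nodes lie in the first row. The remaining verifications—the block separation used in part (ii) and the identification of the restricted trivial and sign characters with the products of the $\phi^{(2^{w_j})}$ and $\phi^{(1^{2^{w_j}})}$—are routine given Lemma \ref{L:MacdonaldOllson1} and the explicit description of $\mathrm{Irr}_0(b)$ recorded above.
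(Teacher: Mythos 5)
Your proof is correct and follows essentially the same route as the paper: part (i) via the Littlewood--Richardson rule forcing $[\lambda\backslash\kappa]$ to be a horizontal strip that is also a union of dominoes on the staircase core, and part (ii) by showing $\alpha=(2w)$ (resp. $(1^{2w})$) is the only possible $\fS_{2w}$-component over $\chi^\kappa$ and then restricting the trivial (resp. sign) character to $P_{2w}$. The extra details you supply (the interlacing inequalities $\lambda_{i+1}\le\kappa_i$ forcing $a_i=0$ for $i\ge2$, and the reverse-lattice reading-word argument) are exactly the steps the paper leaves implicit, and they check out.
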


Now let rows$(\gamma\backslash\alpha)$ be the partition obtained by sorting the row lengths of $\gamma\backslash\alpha$ into weakly decreasing order, and cols$(\gamma\backslash\alpha)$ the partition obtained from the column lengths.

\begin{lemma}[\cite{McNa}]
Replacing the nodes in each column of $[\gamma\backslash\alpha]$ with $1,2,\dots$, from top to bottom, produces a Littlewood-Richardson filling of $[\gamma\backslash\alpha]$ of type cols$(\gamma\backslash\alpha)^t$.

Likewise, replacing the nodes in the rightmost boxes of each non-empty row of $[\gamma\backslash\alpha]$ with $1,2,\dots$, from top to bottom, and repeating to exhaustion, produces a Littlewood-Richardson filling of $[\gamma\backslash\alpha]$ of type rows$(\gamma\backslash\alpha)$.
\end{lemma}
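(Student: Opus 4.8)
The final statement to prove is the Lemma attributed to \cite{McNa}, giving two explicit Littlewood-Richardson fillings of $[\gamma\backslash\alpha]$.

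\medskip

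The plan is to verify directly that each of the two prescribed fillings satisfies the three defining conditions of a Littlewood-Richardson filling, and then to read off its type. I will treat the two assertions separately, since they are dual to one another under conjugation (reflection of the diagram in the line $y=x$), and I expect the first to be the cleaner of the two.

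\medskip

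For the first assertion, I fill $[\gamma\backslash\alpha]$ by placing $1,2,3,\dots$ down each column from top to bottom, restarting the count at $1$ at the top of every column. First I would check the three conditions. Strict increase down columns is immediate, since within a single column the entries are literally $1,2,3,\dots$. For weak increase along rows, I would argue that if $(i,j)$ and $(i,j+1)$ both lie in $[\gamma\backslash\alpha]$, then the portion of column $j+1$ lying above row $i$ is contained in the portion of column $j$ lying above row $i$ (because $\gamma\backslash\alpha$ is a skew diagram, so its columns are themselves intervals and the rows of $\alpha$ are nested inside those of $\gamma$); hence the entry at $(i,j+1)$, which counts its position from the top of its column, is at most the entry at $(i,j)$. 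For the reverse-lattice condition, reading right-to-left and top-to-bottom, I would observe that the first occurrence of the symbol $i$ in a column can appear only after the symbol $i-1$ has already appeared higher in that same column, and that processing proceeds top-to-bottom across rows; a short bookkeeping argument then shows that at every prefix the count of $(i-1)$'s is at least the count of $i$'s. Finally, the number of entries equal to $i$ is exactly the number of columns of height at least $i$, which is the $i$-th part of the conjugate of $\mathrm{cols}(\gamma\backslash\alpha)$; this gives type $\mathrm{cols}(\gamma\backslash\alpha)^t$.

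\medskip

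For the second assertion, the filling is built by repeatedly stripping the rightmost node from each non-empty row and labelling the stripped nodes $1,2,\dots$ from top to bottom, then iterating on what remains. I would check the three conditions analogously, and in fact I expect to deduce this case from the first by the conjugation symmetry of the Littlewood-Richardson coefficients, namely $c_{\alpha,\beta}^{\gamma}=c_{\alpha',\beta'}^{\gamma'}$, under which rows and columns of $\gamma\backslash\alpha$ are interchanged; the type then comes out as $\mathrm{rows}(\gamma\backslash\alpha)$. The main obstacle I anticipate is the reverse-lattice condition in the second construction: because the labelling there is not column-by-column but peels off rightmost nodes across all rows at once, the right-to-left top-to-bottom reading word is less transparent, and one must argue carefully that each ``pass'' contributes its symbols in a way that keeps every prefix-type a partition. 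Making the conjugation argument precise—checking that the peeling procedure for $\gamma\backslash\alpha$ corresponds exactly to the column-filling procedure for $\gamma'\backslash\alpha'$—is where the real care is needed; once that correspondence is established, the second assertion follows from the first with no further computation.
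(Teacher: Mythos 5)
The paper offers no proof of this lemma at all: it is quoted from McNamara \cite{McNa} and used as a black box, so any complete argument you supply is necessarily a different route. Your direct verification of the first filling is essentially the standard one, but the monotonicity step is stated backwards. Since $\alpha'_{j+1}\leq\alpha'_j$, the portion of column $j+1$ lying above row $i$ \emph{contains} the portion of column $j$ lying above row $i$, not the other way around; hence the entry $i-\alpha'_{j+1}$ at $(i,j+1)$ is at least, not at most, the entry $i-\alpha'_j$ at $(i,j)$. That reversed inequality is exactly what weak increase along rows requires, whereas the inequality you assert would make the rows weakly decrease and would contradict condition (1). The lattice-word check (each entry $i$ at $(r,c)$ pairs injectively with the entry $i-1$ at $(r-1,c)$, which is read strictly earlier) is fine, and the count of entries equal to $i$ being the number of columns of length at least $i$ correctly gives type ${\rm cols}(\gamma\backslash\alpha)^t$.

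The genuine gap is in the second assertion. The symmetry $c_{\alpha,\beta}^{\gamma}=c_{\alpha',\beta'}^{\gamma'}$ does not act on Littlewood--Richardson fillings by transposing the diagram: already for $\gamma\backslash\alpha=(2,2)$ the column filling has rows $1\,1$ and $2\,2$, and its transpose has constant columns, so it is not even semistandard. Moreover the peeling filling of $[\gamma\backslash\alpha]$ is simply not the transpose of the column filling of $[\gamma'\backslash\alpha']$: the latter, transported back, assigns to the node $(i,j)$ its position $j-\alpha_i$ from the left end of its row, whereas the peeling procedure assigns $|\{i'\leq i:\gamma_{i'}-\alpha_{i'}\geq\gamma_i-j+1\}|$; in the example of the accompanying figure these give $4$ and $3$ respectively at the node $(3,5)$. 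So the correspondence on which you defer the whole of the second statement does not exist, and no further computation will make it follow from the first statement by duality. You must either verify the three Littlewood--Richardson conditions for the peeling filling directly (which is what \cite{McNa} does, and which requires a separate argument for the reading word, since a given pass distributes its labels across many rows), or else simply cite \cite{McNa}, as the paper itself does.
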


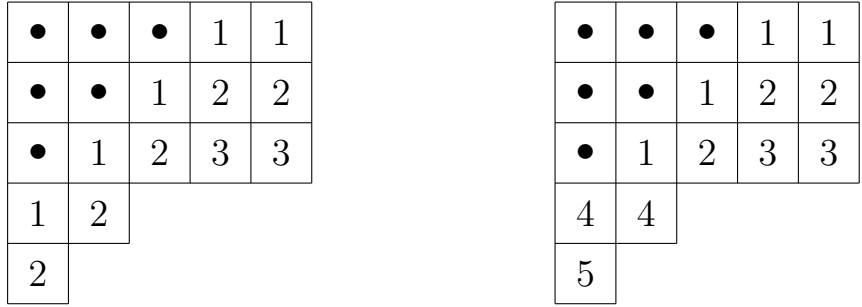
\begin{figure}[!htp]
\begin{center}

\begin{tikzpicture}[scale=0.4, every node/.style={transform shape}]
\tikzstyle{every node}=[font=\large]

\draw (2,10)--(12,10);
\draw (2,8)--(12,8);
\draw (2,6)--(12,6);
\draw (2,4)--(12,4);
\draw (2,2)--(6,2);
\draw (2,0)--(4,0);

\draw (2,10)--(2,0);
\draw (4,10)--(4,0);
\draw (6,10)--(6,2);
\draw (8,10)--(8,4);
\draw (10,10)--(10,4);
\draw (12,10)--(12,4);

\node at (3,9) {$\bullet$};
\node at (5,9) {$\bullet$};
\node at (7,9) {$\bullet$};
\node at (3,7) {$\bullet$};
\node at (5,7) {$\bullet$};
\node at (3,5) {$\bullet$};
\node at (9,9) {$1$};
\node at (11,9) {$1$};

\node at (7,7) {$1$};
\node at (9,7) {$2$};
\node at (11,7) {$2$};

\node at (5,5) {$1$};
\node at (7,5) {$2$};
\node at (9,5) {$3$};
\node at (11,5) {$3$};

\node at (3,3) {$1$};
\node at (5,3) {$2$};
\node at (3,1) {$2$};



\draw (20,10)--(30,10);
\draw (20,8)--(30,8);
\draw (20,6)--(30,6);
\draw (20,4)--(30,4);
\draw (20,2)--(24,2);
\draw (20,0)--(22,0);

\draw (20,10)--(20,0);
\draw (22,10)--(22,0);
\draw (24,10)--(24,2);
\draw (26,10)--(26,4);
\draw (28,10)--(28,4);
\draw (30,10)--(30,4);

\node at (21,9) {$\bullet$};
\node at (23,9) {$\bullet$};
\node at (25,9) {$\bullet$};
\node at (21,7) {$\bullet$};
\node at (23,7) {$\bullet$};
\node at (21,5) {$\bullet$};
\node at (27,9) {$1$};
\node at (29,9) {$1$};

\node at (25,7) {$1$};
\node at (27,7) {$2$};
\node at (29,7) {$2$};

\node at (23,5) {$1$};
\node at (25,5) {$2$};
\node at (27,5) {$3$};
\node at (29,5) {$3$};

\node at (21,3) {$4$};
\node at (23,3) {$4$};
\node at (21,1) {$5$};


\end{tikzpicture}
\end{center}
\caption{The left hand diagram is a Littlewood-Richardson filling of $[(5^3,2,1)\backslash(3,2,1)]$ of type ${\rm cols}((5^3,2,1)\backslash(3,2,1))^t=(5^2,2)$ and the right hand is a filling of type ${\rm rows}((5^3,2,1)\backslash(3,2,1))=(4,3,2^2,1)$.}
\label{fig:a}
\end{figure}
\vspace{.2cm}

\begin{lemma}\label{L:at_least_two}
Let $\lambda\in B(\kappa,w)$ with $\lambda\ne\lambda^0,\lambda^1$. Then the restriction of $\chi^\lambda$ to ${\mathfrak S}_{2w}\times{\mathfrak S}_{n-2w}$ has at least two irreducible constituents of the form $\chi^\alpha\times\chi^\kappa$. Moreover $\alpha\ne(2w),(1^{2w})$.
\begin{proof}
Given the hypothesis on $\lambda$, we have already shown that $\chi^{(2w)}\times\chi^\kappa$ and $\chi^{(1^{2w})}\times\chi^\kappa$ are not constituents of the restriction of $\chi^\lambda$ to ${\mathfrak S}_{2w}\times{\mathfrak S}_{n-2w}$.

Suppose for the sake of contradiction that the restriction of $\chi^\lambda$ to ${\mathfrak S}_{2w}\times{\mathfrak S}_{n-2w}$ has only one irreducible constituent of the form $\chi^\alpha\times\chi^\kappa$, where $\alpha$ is a partition of $2w$. \cite[Lemma 4.4]{BesKle} implies that $[\lambda\backslash\kappa]$ has shape $[\alpha]$, or has shape $[\alpha]$ rotated by $\pi$ radians.

Note that $[\lambda\backslash\kappa]$ is not left justified, as $\kappa$ is a triangular partition with at least two rows, and $\lambda$ is neither a row nor a column. So $[\lambda\backslash\kappa]$ does not have a partition shape.

Suppose then that $[\lambda\backslash\kappa]$ rotated by $\pi$-radians has a partition shape. Equivalently, there is a partition $\mu\subset\kappa$ such that $[\mu]$ is disjoint from $[\lambda\backslash\kappa]$ and $[\lambda\backslash\mu]$ is a rectangle. Then $[\kappa\backslash\mu|\geq3$, as $[\kappa]$ is triangular, and $[\lambda\backslash\kappa]$ has at least two rows and two columns. Every rectangular partition has 2-core $[\,]$ or $[1]$. So if we remove all $2$-hooks from $[\lambda\backslash\mu]$, we are left with a skew diagram with at most $1$ node. In particular we will have removed at least two nodes from $[\kappa]$. This contradicts our hypothesis that $\lambda$ has $2$-core $\kappa$. So this case is also impossible.
\end{proof}
\end{lemma}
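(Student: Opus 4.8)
The plan is to pass to the skew diagram $\theta:=[\lambda\backslash\kappa]$ and to count Littlewood--Richardson fillings. I would first record that $\kappa\subseteq\lambda$, with $[\lambda]=[\kappa]\sqcup\theta$ and $\theta$ a disjoint union of $w$ dominoes: indeed $\kappa$ is reached from $\lambda$ by successively stripping rim $2$-hooks, and each removal produces a subpartition. By Frobenius reciprocity and the Littlewood--Richardson rule, a constituent $\chi^\alpha\times\chi^\kappa$ with $\alpha\vdash 2w$ occurs in $\chi^\lambda{\downarrow_{\fS_{2w}\times\fS_{n-2w}}}$ with multiplicity $c^\lambda_{\alpha,\kappa}$, the number of Littlewood--Richardson fillings of $\theta$ of type $\alpha$. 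The clause $\alpha\ne(2w),(1^{2w})$ is then immediate from the preceding lemma on $\lambda^0$ and $\lambda^1$: there $\lambda^0$ (respectively $\lambda^1$) is the unique $B(\kappa,w)$-constituent of $(\chi^{(2w)}\times\chi^\kappa){\uparrow^{\fS_n}}$ (respectively of $(\chi^{(1^{2w})}\times\chi^\kappa){\uparrow^{\fS_n}}$), whence $c^\lambda_{(2w),\kappa}=c^\lambda_{(1^{2w}),\kappa}=0$ because $\lambda\ne\lambda^0,\lambda^1$.

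For the existence of at least two constituents I would argue by contradiction. Since $\kappa\subseteq\lambda$ there is always at least one constituent $\chi^\alpha\times\chi^\kappa$ --- for instance McNamara's lemma \cite{McNa}, applied to the pair $(\lambda,\kappa)$, exhibits explicit Littlewood--Richardson fillings of $\theta$ of types $\mathrm{rows}(\theta)$ and $\mathrm{cols}(\theta)^t$ --- so it suffices to rule out that there is exactly one such constituent and that it occurs with multiplicity one. Under that assumption, \cite[Lemma 4.4]{BesKle} forces $\theta=[\lambda\backslash\kappa]$ to be a straight partition shape $[\alpha]$, or the diagram $[\alpha]$ rotated by $\pi$. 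Excluding both shapes, which I expect to be the crux, then completes the proof.

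The exclusion should be forced by the triangular shape $\kappa=(s,s-1,\dots,1)$ with $s\ge2$. First, $\theta$ cannot be a straight shape: since $s\ge2$, every $\lambda\in B(\kappa,w)$ has at least two rows and at least two columns, and the strictly decreasing rows of $\kappa$ indent the lower rows of $\theta$ to the right, so $\theta$ is not left-justified. Second, if the $\pi$-rotation of $\theta$ were a partition shape, there would be a partition $\mu\subset\kappa$ with $[\mu]$ disjoint from $\theta$ and $[\lambda\backslash\mu]$ a rectangle; triangularity of $\kappa$ together with the fact that (as $\lambda\ne\lambda^0,\lambda^1$) $\theta$ spans at least two rows and two columns forces $|\kappa\backslash\mu|\ge3$. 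A rectangle has $2$-core of size at most one, so $[\lambda\backslash\mu]$ can be stripped of dominoes down to at most one node; carried out as rim-hook removals of $\lambda$, this drives the $2$-core of $\lambda$ to size at most $|\mu|+1<|\kappa|$, contradicting that $\lambda$ has $2$-core $\kappa$. With both degenerate shapes excluded, $\chi^\lambda{\downarrow_{\fS_{2w}\times\fS_{n-2w}}}$ has at least two constituents of the required form.
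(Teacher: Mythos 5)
Your proof follows essentially the same route as the paper's: reduce to the Littlewood--Richardson coefficients $c^\lambda_{\alpha,\kappa}$, invoke Bessenrodt--Kleshchev's Lemma 4.4 to force $[\lambda\backslash\kappa]$ to be a partition shape or a rotated partition shape, and exclude both possibilities exactly as the paper does (non-left-justification coming from the triangularity of $\kappa$ with $s\ge2$, and the $2$-core count via $|\kappa\backslash\mu|\ge3$ in the rotated case). The only cosmetic differences are your explicit appeal to McNamara's fillings for the existence of at least one constituent and the multiplicity-one phrasing of the contradiction hypothesis, both of which the paper leaves implicit.
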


\begin{proposition}\label{P:main}
For $\lambda\in B(\kappa,w)$, with $\lambda\ne\lambda^0,\lambda^1$, the number of height-zero constituents in the restriction of $\chi^\lambda$ to $N$ which belong to $b$ is:
\begin{itemize}
\item[(i)] two, if\/ $w$ is a power of $2$ and $[\lambda\backslash\kappa]$ is the disjoint union of a row and a column.
\item[(ii)] at least three, if\/ $w$ is a power of\/ $2$ and $[\lambda\backslash\kappa]$ is a $2w$ rim-hook of leg-length $1$ or $2w-2$.
\item[(iii)] at least four, in all other cases.
\end{itemize}
\end{proposition}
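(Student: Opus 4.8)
The plan is to peel the problem down to a statement about the Sylow restriction of a single character of $\fS_{2w}$. Since $N=P_{2w}\times\fS_{n-2w}$ and $b=b_0\times B(\kappa,0)$ with $B(\kappa,0)=\{\chi^\kappa\}$ of defect zero, any constituent of $\chi^\lambda{\downarrow_N}$ lying in $b$ has its $\fS_{n-2w}$-part equal to $\chi^\kappa$, and it has height zero precisely when its $P_{2w}$-part is linear. Writing $\Theta$ for the character of $\fS_{2w}$ determined by $\chi^\lambda{\downarrow_{\fS_{2w}\times\fS_{n-2w}}}=(\Theta\times\chi^\kappa)+(\text{terms with }\fS_{n-2w}\text{-part}\neq\chi^\kappa)$, the height-zero constituents of $\chi^\lambda{\downarrow_N}$ in $b$ are exactly the $\phi\times\chi^\kappa$ with $\phi$ a linear constituent of $\Theta{\downarrow_{P_{2w}}}$. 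By Frobenius reciprocity and the symmetry of the Littlewood--Richardson coefficients, $\Theta=\sum_{\alpha\vdash 2w}m_\alpha\chi^\alpha$, where $m_\alpha$ is the number of Littlewood--Richardson fillings of $[\lambda\backslash\kappa]$ of type $\alpha$. Thus the quantity to be computed is the number of \emph{distinct} linear characters of $P_{2w}$ appearing in $\bigoplus_\alpha m_\alpha\,\chi^\alpha{\downarrow_{P_{2w}}}$.

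The main tools are three. First, Lemma \ref{L:at_least_two} supplies at least two types $\alpha$ with $m_\alpha\neq0$, both different from $(2w)$ and $(1^{2w})$. Second, the lemma of \cite{McNa} identifies two explicit such types, namely $\mathrm{rows}([\lambda\backslash\kappa])$ and $\mathrm{cols}([\lambda\backslash\kappa])^{t}$. Third, Proposition \ref{theo1.2:G} counts linear constituents one type at a time: since $2^{w_i}$ is a power of $2$, $\chi^{\beta}{\downarrow_{P_{2^{w_i}}}}$ has a unique linear constituent, equal to $\phi^{\beta}$, when $\beta$ is a hook, and at least two when $\chi^{\beta}$ has even degree; moreover \cite[Theorem 1.1]{G} gives a bijection between the hooks of $2^{w_i}$ and the $2^{w_i}$ linear characters of $P_{2^{w_i}}$. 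When $w$ is a power of $2$ we have $t=1$ and $P_{2w}=P_{2^{w_1}}$; otherwise $P_{2w}=\prod_iP_{2^{w_i}}$, and I would restrict $\chi^\alpha$ first to the Young subgroup $\prod_i\fS_{2^{w_i}}$ and then apply the above factorwise.

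With this set-up the three cases are separated by the shape of $[\lambda\backslash\kappa]$. In case (i), a disjoint row of length $a$ and column of length $c$ (necessarily $a,c\geq2$, since the smaller configurations would force a type $(2w)$ or $(1^{2w})$, excluded by Lemma \ref{L:at_least_two}) has exactly two Littlewood--Richardson fillings, of the hook types $(a{+}1,1^{c-1})$ and $(a,1^c)$; this is a direct reading-word computation. Each hook contributes its single linear character, and the two hooks are distinct, giving exactly two. In case (ii), a rim-hook of leg-length $1$ has a bottom row of some length $q\geq2$, and a direct computation shows its Littlewood--Richardson fillings are exactly the types $(2w{-}j,j)$ for $1\leq j\leq\min(q,w)$; these include the hook $(2w{-}1,1)$ and the non-hook $(2w{-}2,2)$, contributing one and at least two linear characters respectively, whence at least three (the leg-length $2w{-}2$ sub-case is its conjugate, obtained by twisting with the sign character, under which the self-conjugate core $\kappa=\kappa'$ is fixed). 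In case (iii), either $w$ is not a power of $2$, where I would use induction on $t$ and the product structure of $P_{2w}$ to combine independent choices in at least two tensor factors and reach at least four; or $w$ is a power of $2$ and the shape is neither of the above, where a shape analysis (using $\lambda\neq\lambda^0,\lambda^1$ and that $\kappa$ is a nontrivial staircase, so $[\lambda\backslash\kappa]$ is neither left-justified nor a single row or column) shows that the two types furnished by Lemma \ref{L:at_least_two} may be taken to be non-hooks — the only boundary leg-lengths at which $\mathrm{rows}$ or $\mathrm{cols}^{t}$ degenerates to a hook being precisely cases (i) and (ii) — so each contributes at least two linear characters, for a total of at least four.

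The crux, and the step I expect to be hardest, is \emph{distinctness} in cases (ii) and (iii): a priori two different types $\alpha,\alpha'$ occurring in $\Theta$ can contribute the same linear character $\phi^h$, since the sets $S(\alpha):=\{h:[\chi^\alpha{\downarrow_{P_{2w}}},\phi^h]\neq0\}$ may overlap, so one cannot simply add the per-type counts from Proposition \ref{theo1.2:G}. (In case (i) this does not arise, as distinct hooks correspond to distinct linear characters under \cite[Theorem 1.1]{G}.) Resolving it requires pinning down $S(\alpha)$ for the hook and two-row types that occur. I would do this through the recursive structure $P_{2^k}=P_{2^{k-1}}\wr C_2$, describing each $\phi^h$ by its restrictions to the successive base subgroups of the wreath product, equivalently by its values on the abelianization $P_{2^k}^{\mathrm{ab}}\cong(\mathbb{F}_2)^k$, which I expect to be governed by the $2$-adic digits of the leg-length of $h$; one then verifies that the hooks produced by the hook type and by the non-hook type(s) in each case are pairwise distinct. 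This same analysis secures the exact value two in case (i), where in addition one confirms that $\Theta$ contains no type beyond the two hooks identified above.
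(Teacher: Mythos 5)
Your overall strategy coincides with the paper's: reduce to counting the distinct linear constituents of the $\fS_{2w}$-part of $\chi^\lambda{\downarrow_{\fS_{2w}\times\fS_{n-2w}}}$ lying over $\chi^\kappa$, then combine Lemma \ref{L:at_least_two}, the lemma of \cite{McNa}, and Proposition \ref{theo1.2:G} in a case analysis on the shape of $[\lambda\backslash\kappa]$. Your case (i) is complete and agrees with the paper: the skew shape admits exactly two Littlewood--Richardson fillings, of hook types $(a+1,1^{c-1})$ and $(a,1^c)$, and distinct hooks give distinct linear characters under the bijection of \cite[Theorem 1.1]{G}. (A small remark: since $\kappa$ is a staircase, the relevant short row of a leg-length-one rim-hook has length exactly $2$, so in case (ii) the only types are $(2w-1,1)$ and $(2w-2,2)$; your $\min(q,w)$ formula is harmless but not needed.)

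The genuine gap is the one you yourself flag and then defer: in cases (ii) and (iii) you add per-type lower bounds ($1+2$, resp.\ $2+2$) without showing that the linear characters contributed by different types are distinct, and the step that would close this is left as a plan ("I would do this through the recursive structure\dots", "one then verifies\dots"). As written, nothing you have established excludes, say, $S\bigl((2w-2,2)\bigr)=\{\phi^{(2w)},\phi^{(2w-1,1)}\}$, in which case the union with the hook type's contribution $\{\phi^{(2w-1,1)}\}$ has only two elements, not three. To be fair, the paper's proof of Proposition \ref{P:main} makes the same additive leap at this point; the information needed to control the sets $S(\alpha)$ for the two-row types is supplied only afterwards, in Lemma \ref{Lem(-2,2)} (which forces $\phi^{(2^k)}$ to occur in $\chi^{(2^k-2,2)}{\downarrow_{P_{2^k}}}$ by evaluating both sides on a $2^k$-cycle via the Murnaghan--Nakayama rule and the explicit values $\phi^{(2^k-b,1^b)}(g)=(-1)^b$) and Lemma \ref{4linear} (which produces at least four linear constituents for $k\geq3$ by restricting to $P_{2^{k-1}}\times P_{2^{k-1}}$ and invoking \cite[Lemma 2.2]{G}). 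This is exactly the wreath-product and character-value analysis you propose but do not execute; until it is carried out (for $(2w-2,2)$, its conjugate $(2^2,1^{2w-4})$, and the two non-hook types arising in case (iii)), the claimed counts of "at least three" and "at least four" distinct constituents are not established. So your diagnosis of where the difficulty lies is exactly right, but the proof is incomplete precisely there.
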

\begin{proof}
By Lemma \ref{L:at_least_two}, the restriction of $\chi^\lambda$ to ${\mathfrak S}_{2w}\times{\mathfrak S}_{n-2w}$ has at least two irreducible constituents of the form $\chi^\alpha\times\chi^\kappa$, with $\alpha\ne(2w),(1^{2w})$.

Assume the hypothesis of (i). Then the Littlewood-Richardson rule gives
$$
(\chi^\lambda){\downarrow_{{\mathfrak S}_{2w}\times{\mathfrak S}_{n-2w}}}=\big(\chi^{(m+1,1^{2w-m-1})}\times\chi^\kappa\big)+\big(\chi^{(m,1^{2w-m})}\times\chi^\kappa\big)+\psi
$$
where $m$ is the length of the row of $[\lambda\backslash\kappa]$ and no irreducible constituent of the character $\psi$ has the form $\chi^\alpha\times\chi^\kappa$. Proposition \ref{theo1.2:G}(ii) implies that the restriction of $\chi^\lambda$ to $N$ has exactly two height-zero irreducible constituents in $b$.

Suppose next that $w$ is a power of $2$ and ${\rm cols}(\lambda\backslash\kappa)^t$ is a hook partition. As $\lambda\ne\lambda^0,\lambda^1$, this means that $[\lambda\backslash\kappa]$ has a unique column of length $\geq2$. We may assume that $[\lambda\backslash\kappa]$ is not the disjoint union of a row and a column. As $\kappa$ is a non-trivial triangular partition, the only possibility remaining is that $[\lambda\backslash\kappa]$ is a $2w$ rim-hook of leg-length $1$. Then ${\rm rows}(\lambda\backslash\kappa)=(2w-2,2)$ is not a hook partition. Proposition \ref{theo1.2:G} implies that the restriction of $\chi^{(2w-2,2)}$ to $P_{2w}$ has at least two linear constituents. It follows that the restriction of $\chi^\lambda$ to $N$ has at least three height-zero irreducible constituents in $b$.

A similar argument works when $w$ is a power of $2$ and ${\rm rows}(\lambda\backslash\kappa)$ is a hook partition. In that case, we may assume that $[\lambda\backslash\kappa]$ is a $2w$ rim-hook of leg-length $2w-2$. Then ${\rm cols}(\lambda\backslash\kappa)^t=(2^2,1^{2w-4})$ is not a hook partition and once again Proposition \ref{theo1.2:G} implies that the restriction of $\chi^{(2^2,1^{2w-4})}$ to $P_{2w}$ has at least two height-zero irreducible constituents in $b$. This completes the analysis of the hypothesis of (ii).

To prove (iii), we may suppose that $w$ is not a power of $2$, or that neither ${\rm cols}(\lambda\backslash\kappa)^t$ nor ${\rm rows}(\lambda\backslash\kappa)$ are hook partitions. Proposition \ref{theo1.2:G} implies that the restriction of each $\chi^\alpha$ to $P_{2w}$ has at least two linear constituents. It follows that the restriction of $\chi^\alpha\times\chi^\kappa$ to $N$ has at least two height-zero irreducible constituents in $b$. Taking into account that there are at least two such $\alpha$, we see that the restriction of $\chi^\lambda$ to $N$ has at least four height-zero irreducible constituents in $b$.
\end{proof}

We are actually able to characterise when the number of height-zero constituents is $3$. As shown in Corollary \ref{Corlast} below, this situation occurs extremely rarely. 

We first need the following lemma. 

\begin{lemma}\label{Lem(-2,2)}
Let $k\geq 2$ be a positive integer. Then the linear character $\phi^{(2^k)}$ is an irreducible constituent of the restriction of $\chi^{(2^k-2,2)}$ to the Sylow $2$-subgroup $P_{2^k}$ of $\fS_{2^k}$. 
\end{lemma}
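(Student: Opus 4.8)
The plan is to pin down $\phi^{(2^k)}$ explicitly and then translate the whole statement into an orbit count for the action of $P_{2^k}$ on the $2^k$ points it permutes. First I would note that $\phi^{(2^k)}$ is simply the trivial character $1_{P_{2^k}}$: the one-row partition $(2^k)$ labels the trivial character $\chi^{(2^k)}$ of $\fS_{2^k}$, whose restriction to $P_{2^k}$ is trivial, so its unique linear constituent is $1_{P_{2^k}}$. Hence the lemma is equivalent to the assertion $[\chi^{(2^k-2,2)}{\downarrow_{P_{2^k}}},1_{P_{2^k}}]\neq 0$.

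Next I would extract this multiplicity from the Young permutation module. By Young's rule for two-row shapes,
$$
1{\uparrow_{\fS_{2^k-2}\times\fS_2}^{\fS_{2^k}}}=\chi^{(2^k)}+\chi^{(2^k-1,1)}+\chi^{(2^k-2,2)},
$$
each constituent with multiplicity one. Restricting to $P_{2^k}$ and pairing with $1_{P_{2^k}}$, Frobenius reciprocity together with the double-coset (Mackey) formula identifies the multiplicity of $1_{P_{2^k}}$ in the left-hand side with the number of $P_{2^k}$-orbits on $\fS_{2^k}/(\fS_{2^k-2}\times\fS_2)$, i.e.\ on the $2$-element subsets of $\{1,\dots,2^k\}$. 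The two unwanted terms are easy: $[\chi^{(2^k)}{\downarrow_{P_{2^k}}},1_{P_{2^k}}]=1$, while the same double-coset count applied to $1{\uparrow_{\fS_{2^k-1}}^{\fS_{2^k}}}=\chi^{(2^k)}+\chi^{(2^k-1,1)}$ gives the number of $P_{2^k}$-orbits on points, which is $1$ by transitivity, forcing $[\chi^{(2^k-1,1)}{\downarrow_{P_{2^k}}},1_{P_{2^k}}]=0$. Subtracting, I would conclude
$$
[\chi^{(2^k-2,2)}{\downarrow_{P_{2^k}}},1_{P_{2^k}}]=(\text{number of }P_{2^k}\text{-orbits on }2\text{-subsets})-1.
$$

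It remains to count the orbits, and this is the one genuinely combinatorial point. Here I would use the identification $P_{2^k}\cong C_2\wr\cdots\wr C_2$ with the automorphism group $\Aut(T_k)$ of the complete rooted binary tree $T_k$ of depth $k$, acting on its $2^k$ leaves. A pair of distinct leaves $\{a,b\}$ is determined up to this action by the level $d\in\{0,1,\dots,k-1\}$ of the least common ancestor of $a$ and $b$: using transitivity of $\Aut(T_k)$ on the nodes at any fixed level and on the leaves below any fixed node, any two pairs sharing the same $d$ lie in one orbit, and distinct values of $d$ give distinct orbits. Thus there are exactly $k$ orbits on $2$-subsets, so the multiplicity above equals $k-1$, which is positive precisely when $k\geq2$. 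This proves the lemma.

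The main obstacle is the orbit count; all of the representation theory reduces to Young's rule and the Frobenius/Mackey bookkeeping. The transitivity claims for $\Aut(T_k)$ on pairs with fixed least-common-ancestor level should be verified carefully, but they follow routinely from the recursion $P_{2^k}=P_{2^{k-1}}\wr C_2$ and can, if preferred, be proved by a short induction on $k$ (with base case $k=2$, where $P_4\cong D_8$ has two orbits on the six $2$-subsets, matching the multiplicity $k-1=1$).
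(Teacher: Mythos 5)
Your proof is correct, and it takes a genuinely different route from the paper's. You first observe that $\phi^{(2^k)}$ is just the trivial character of $P_{2^k}$ (since $\chi^{(2^k)}$ is the trivial character of $\fS_{2^k}$), which converts the lemma into the statement $[\chi^{(2^k-2,2)}{\downarrow_{P_{2^k}}},1_{P_{2^k}}]\neq 0$; you then compute this multiplicity exactly, via Young's rule for the permutation character on $2$-subsets and the standard fact that $[\pi{\downarrow_H},1_H]$ counts $H$-orbits, reducing everything to the count of $P_{2^k}$-orbits on $2$-subsets of the leaves of the binary tree. The orbit count (one orbit per level of the least common ancestor, hence $k$ orbits) is correct and is easily verified by induction on $k$ using $P_{2^k}\cong P_{2^{k-1}}\wr C_2$, exactly as you indicate; your base case $k=2$ check against $D_8$ is also right. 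The paper instead argues by induction on $k$: it restricts $\chi^{(2^k-2,2)}$ through $\fS_{2^{k-1}}\times\fS_{2^{k-1}}$ by Littlewood--Richardson, observes that $\phi^{(2^{k-1})}\times\phi^{(2^{k-1})}$ occurs with odd multiplicity $2c_{k-1}+1$, notes that only $\phi^{(2^k)}$ and $\phi^{(2^k-1,1)}$ lie over this character, and rules out the bad alternative by evaluating at a $2^k$-cycle via Murnaghan--Nakayama. Your argument is shorter, avoids the evaluation at a $2^k$-cycle and the appeal to the structural results of \cite{G} on linear characters of $P_{2^k}$, and yields the sharper conclusion that the multiplicity equals $k-1$; its one limitation is that it exploits the coincidence $\phi^{(2^k)}=1_{P_{2^k}}$ and so does not generalize to the other hook characters $\phi^{(2^k-b,1^b)}$, whereas the paper's method is of a piece with the surrounding arguments (e.g.\ Lemma 3.14) that track arbitrary linear constituents.
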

\begin{proof}
We proceed by induction on $k$. The statement is true for $k=2$ by direct computation. Suppose that $k>2$. For clarity we set $q=2^{k-1}$. Then $P_q\times P_q$ is a Sylow $2$-subgroup of the Young subgroup $\fS_q\times\fS_q$ of $\fS_{2^k}$, and we may assume that $P_q\times P_q\leq P_{2^k}$. The Littewood-Richardson shows that
\begin{equation}\label{E:rest}
\begin{aligned}
\chi^{(2^k-2,2)}{\downarrow_{\fS_q\times\fS_q}} =\hspace{1em}& \big(\chi^{(q)}\times\chi^{(q)}\big)+\big(\chi^{(q)}\times\chi^{(q-2,2)}\big)+\big(\chi^{(q-2,2)}\times\chi^{(q)}\big)\\
&+\big(\chi^{(q-1,1)}\times\chi^{(q-1,1)}\big)+\big(\chi^{(q)}\times\chi^{(q-1,1)}\big)+\big(\chi^{(q-1,1)}\times\chi^{(q)}\big)
\end{aligned}
\end{equation}
Taking into consideration Proposition \ref{theo1.2:G}, we get 
$$
\chi^{(2^k-2,2)}{\downarrow_{P_q\times P_q}}= (2c_{k-1}+1)\big(\phi^{(q)}\times\phi^{(q)}\big)+
\big(\phi^{(q-1,1)}\times \phi^{(q-1,1)}\big)+\Delta,
$$
where $c_{k-1}$ is the multiplicity of $\phi^{(q)}$ as an irreducible constituent of $\chi^{(q-2,2)}$ and $\Delta$ is a sum of irreducible characters of $P_q\times P_q$ all of the form $\eta\times\rho$ for some $\eta,\rho\in\mathrm{Irr}(P_q)$ with $\eta\neq\rho$. The inductive hypothesis guarantees that $c_{k-1}\neq 0$. 

Now \cite[Theorem 3.2]{G} shows that $\phi^{(2^{k})}$ and $\phi^{(2^{k}-1,1)}$ are the only linear characters of $P_{2^k}$ whose restriction to $P_q\times P_q$ equals $\phi^{(q)}\times\phi^{(q)}$. Likewise $\phi^{(2^{k}-2,1^2)}$ and $\phi^{(2^{k}-3,1^3)}$ are the only linear characters of $P_{2^k}$ whose restriction to $P_q\times P_q$ equals $\phi^{(q-1,1)}\times\phi^{(q-1,1)}$.
Suppose for the sake of contradiction that $\phi^{(2^{k})}$ is not a summand of $\chi^{(2^k-2,2)}\downarrow_{P_{2^k}}$. Then 
\begin{equation}\label{E:char}
\chi^{(2^k-2,2)}\downarrow_{P_{2^k}}= (2c_{k-1}+1)\big(\phi^{(2^{k}-1,1)})+
\phi^{(2^{k}-a,1^a)}+\Delta,
\end{equation}
where $a=2$ or $3$ and $\Delta$ is a sum of non-linear irreducible characters of $P_{2^k}$.

Now let $g\in P_{2^k}$ be a $2^k$-cycle. Then $\chi^{(2^k-2,2)}(g)=0$ by the Murnaghan-Nakayama rule, as $(2^k-2,2)$ has no rim-hooks of length $2^k$. It is shown in \cite[Theorem 3.2]{G} that $\phi^{(2^{k}-b,1^b)}(g)=(-1)^b$, for each $b\geq1$. Moreover it is easy to show that $\Delta(g)=0$, as $\Delta$ has no linear constituents. So \eqref{E:char} becomes
$$
0=-(2c_{k-1}+1)\pm 1.
$$
But $c_{k-1}$ is positive. So $-(2c_{k-1}+1)\pm 1<0$, leading to a contradiction.
\end{proof}

\begin{lemma}\label{4linear}
Let $k\geq 3$ be a positive integer. Then the restriction of $\chi^{(2^k-2,2)}$ to $P_{2^k}$ has at least four linear constituents.
\end{lemma}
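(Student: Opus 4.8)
The plan is to count the linear constituents of $\chi^{(2^k-2,2)}{\downarrow_{P_{2^k}}}$ by passing to the base group $P_q\times P_q\leq P_{2^k}$, where $q=2^{k-1}$, exactly as in the proof of Lemma \ref{Lem(-2,2)}; here and below the number of linear constituents is understood with multiplicity. From that proof we already have the decomposition
$$
\chi^{(2^k-2,2)}{\downarrow_{P_q\times P_q}}=(2c_{k-1}+1)\big(\phi^{(q)}\times\phi^{(q)}\big)+\big(\phi^{(q-1,1)}\times\phi^{(q-1,1)}\big)+\Delta,
$$
where $c_{k-1}$ is the multiplicity of $\phi^{(q)}$ in $\chi^{(q-2,2)}{\downarrow_{P_q}}$ and every summand of $\Delta$ has the form $\eta\times\rho$ with $\eta\neq\rho$. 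In particular the only ``diagonal'' summands $\phi\times\phi$ with $\phi$ a linear character of $P_q$ are $\phi^{(q)}\times\phi^{(q)}$ and $\phi^{(q-1,1)}\times\phi^{(q-1,1)}$, of total multiplicity $(2c_{k-1}+1)+1=2c_{k-1}+2$.

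Next I would relate this diagonal-linear multiplicity to the linear multiplicity upstairs. Writing $P_{2^k}=(P_q\times P_q)\rtimes C_2$ with $C_2$ interchanging the two factors, Clifford theory shows that every linear character of $P_{2^k}$ restricts to $\phi\times\phi$ for a unique linear $\phi$ of $P_q$, this correspondence being two-to-one, while every non-linear irreducible character of $P_{2^k}$ restricts either to $\eta\times\rho+\rho\times\eta$ with $\eta\neq\rho$, or to a diagonal $\eta\times\eta$ with $\eta$ non-linear. Consequently no non-linear irreducible of $P_{2^k}$ contributes a diagonal-linear summand $\phi\times\phi$ after restriction, so for any character $\theta$ of $P_{2^k}$ the total multiplicity of linear constituents of $\theta$ equals the total multiplicity of the diagonal-linear summands of $\theta{\downarrow_{P_q\times P_q}}$. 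Applying this to $\theta=\chi^{(2^k-2,2)}{\downarrow_{P_{2^k}}}$ gives that the number of linear constituents of $\chi^{(2^k-2,2)}{\downarrow_{P_{2^k}}}$ is exactly $2c_{k-1}+2$; equivalently these constituents all lie among $\phi^{(2^k)},\phi^{(2^k-1,1)}$ (restricting to $\phi^{(q)}\times\phi^{(q)}$) and $\phi^{(2^k-2,1^2)},\phi^{(2^k-3,1^3)}$ (restricting to $\phi^{(q-1,1)}\times\phi^{(q-1,1)}$) by \cite[Theorem 3.2]{G}, with these two groups accounting for multiplicities $2c_{k-1}+1$ and $1$ respectively.

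Finally, since $k\geq3$ we may apply Lemma \ref{Lem(-2,2)} with $k$ replaced by $k-1\geq2$ to conclude that $\phi^{(q)}=\phi^{(2^{k-1})}$ is a constituent of $\chi^{(q-2,2)}{\downarrow_{P_q}}$, so $c_{k-1}\geq1$. Therefore the number of linear constituents is $2c_{k-1}+2\geq4$, as claimed. The main obstacle is the second paragraph: one must verify the multiplicity-matching between $P_{2^k}$ and $P_q\times P_q$, namely that the diagonal-linear summands of a restricted character detect precisely the linear constituents upstairs. This rests on the wreath-product Clifford theory of $P_{2^k}$ (equivalently on the description of linear characters in \cite[Theorem 3.2]{G} together with the observation that non-linear irreducibles never restrict to a diagonal-linear character), after which the remaining arithmetic is immediate. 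I note that the bound is with multiplicity and is sharp already for $k=3$, where $2c_{2}+2=4$ even though only three distinct linear characters actually occur.
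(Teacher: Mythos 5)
Your proof is correct and follows essentially the same route as the paper: restrict to the base group $P_q\times P_q$, use the decomposition from the proof of Lemma \ref{Lem(-2,2)} together with $c_{k-1}\geq 1$ to exhibit (at least) four diagonal-linear summands, and lift the count back up to $P_{2^k}$. The only difference is that the paper delegates the lifting step to \cite[Lemma 2.2]{G}, whereas you prove it directly via the Clifford theory of the wreath product $P_q\wr C_2$; your version of that step is sound.
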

\begin{proof}
We adopt the notation used in Lemma \ref{Lem(-2,2)}. Then \eqref{E:rest} implies that
$$
\chi^{(2^k-2,2)}\downarrow_{P_q\times P_q}= 3\cdot\big(\phi^{(q)}\times\phi^{(q)}\big)+
\big(\phi^{(q-1,1)}\times\phi^{(q-1,1)}\big)+\Omega,
$$
where $\Omega$ is a character of $P_q\times P_q$. The conclusion now follows from \cite[Lemma 2.2]{G}.
\end{proof}

\begin{corollary}\label{Corlast}
For each $\lambda\in B(\kappa,w)$ with $\lambda\ne\lambda^0,\lambda^1$ we have that the restriction of\/ $\chi^\lambda$ to $N$ has exactly three irreducible constituents in $\mathrm{Irr}_0(b)$ if and only if\/ $w=2$ and ${\rm rows}(\lambda\backslash\kappa)=(2,2)$ or ${\rm cols}(\lambda\backslash\kappa)^t=(2,2)$.
\end{corollary}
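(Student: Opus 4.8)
The plan is to build on Proposition \ref{P:main}, which already leaves case (ii) as the only possibility for the count to equal three: in case (i) it is exactly two, and in case (iii) it is at least four. Thus I may assume that $w$ is a power of $2$ and that $[\lambda\backslash\kappa]$ is a single $2w$ rim-hook of leg-length $1$ or $2w-2$, and the whole problem reduces to deciding, within this case, when the number of height-zero constituents is three rather than at least four. Exactly as in the proof of Proposition \ref{P:main}, this number equals the number of \emph{distinct} linear characters $\phi^h$ of $P_{2w}$ occurring in $\Xi\!\downarrow_{P_{2w}}$, where $\Xi:=\sum_{\alpha\vdash 2w}c^\lambda_{\alpha,\kappa}\,\chi^\alpha$ is the $\chi^\kappa$-isotypic part of $\chi^\lambda\!\downarrow_{\fS_{2w}\times\fS_{|\kappa|}}$, a character of $\fS_{2w}$.

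First I would determine $\Xi$ explicitly. Because $\kappa$ is triangular, a leg-length-$1$ rim-hook is forced into the first two rows with $\mathrm{rows}(\lambda\backslash\kappa)=(2w-2,2)$; a short Littlewood--Richardson computation (the top row of the strip must be filled entirely with $1$'s, after which only the two bottom cells are free) then shows that the only constituents of the form $\chi^\alpha\times\chi^\kappa$ are those with $\alpha=(2w-1,1)$ and $\alpha=(2w-2,2)$, each of multiplicity one. Conjugating, using $\kappa'=\kappa$ and $\chi^{\lambda'}=\mathrm{sgn}\cdot\chi^\lambda$, the leg-length-$(2w-2)$ case yields $\alpha=(2,1^{2w-2})$ and $\alpha=(2^2,1^{2w-4})$. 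In either situation one of the two partitions is a hook and the other is $(2w-2,2)$ or its conjugate.

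Next I would count. The hook summand $\chi^{(2w-1,1)}$ (resp. $\chi^{(2,1^{2w-2})}$) contributes exactly one linear constituent, namely $\phi^{(2w-1,1)}$ (resp. $\phi^{(2,1^{2w-2})}$), by Proposition \ref{theo1.2:G}(ii) together with \cite[Theorem 1.1]{G}, since $2w$ is a power of $2$ and the degree is odd. For the other summand, Lemma \ref{4linear} gives at least four linear constituents of $\chi^{(2w-2,2)}\!\downarrow_{P_{2w}}$ (and hence, after tensoring with $\phi^{(1^{2w})}$, of $\chi^{(2^2,1^{2w-4})}\!\downarrow_{P_{2w}}$) as soon as $2w\geq 8$; this already forces the total to be at least four, so $w\geq 4$ is excluded. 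When $w=2$ the factor $\chi^{(2,2)}$ is self-conjugate, so $\chi^{(2,2)}\!\downarrow_{P_4}$ is invariant under multiplication by $\phi^{(1^4)}$; combined with Proposition \ref{theo1.2:G} (its degree is even, giving exactly two distinct linear constituents, one of which is $\phi^{(4)}$ by Lemma \ref{Lem(-2,2)}) this pins the two constituents down to $\phi^{(4)}$ and $\phi^{(1^4)}$. Since the hook supplies a third linear character distinct from both, the count is exactly three. As $w=2$ is the only power of $2$ with $2\le 2w<8$ for which case (ii) contains a partition other than $\lambda^0,\lambda^1$, and as for triangular $\kappa$ the condition $\mathrm{rows}(\lambda\backslash\kappa)=(2,2)$ (resp. $\mathrm{cols}(\lambda\backslash\kappa)^t=(2,2)$) is exactly the leg-length-$1$ (resp. $2w-2$) rim-hook with $w=2$, this establishes the stated equivalence.

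The step I expect to be most delicate is the distinctness bookkeeping when $w=2$: a priori the single linear constituent coming from the hook could coincide with one of the two coming from $\chi^{(2,2)}$, which would drop the count to two. The self-conjugacy of $(2,2)$ is precisely what rules this out, forcing the $\chi^{(2,2)}$-constituents to be $\{\phi^{(4)},\phi^{(1^4)}\}$ rather than some intermediate hook $\phi^h$; making this identification cleanly, without resorting to an explicit computation in $P_4\cong D_8$, is the one point that needs genuine care.
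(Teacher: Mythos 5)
Your proof is correct and follows essentially the same route as the paper, whose own proof simply cites Lemma \ref{4linear} together with the proof of Proposition \ref{P:main}. You have merely made explicit the details the paper leaves implicit, namely the Littlewood--Richardson computation showing that the $\chi^\kappa$-isotypic part of the restriction is exactly $\chi^{(2w-1,1)}+\chi^{(2w-2,2)}$ (or its conjugate) and the identification of the two linear constituents of $\chi^{(2,2)}{\downarrow_{P_4}}$ as $\phi^{(4)}$ and $\phi^{(1^4)}$ via Lemma \ref{Lem(-2,2)} and self-conjugacy.
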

\begin{proof}
This follows from Lemma \ref{4linear} and the proof of Proposition \ref{P:main}.
\end{proof}

\bigskip

\noindent\textbf{Acknowledgments.}
We thank Pham Huu Tiep and Gabriel Navarro for several interesting conversations on the topic. 
Part of this work was done while the second and third author were visiting the University of Kaiserslautern. We are grateful to Gunter Malle for supporting these visits.


\begin{thebibliography}{10}

\bibitem{Alp}
{\sc  J.L. Alperin,}
\newblock Remarks on a conjecture by McKay and a theorem by Glaubermann. 
\newblock {\it  Seminar Lecture, Chicago} (1974).  

\bibitem{Besse}
{\sc  C. Bessenrodt,}
\newblock On hooks of Young diagrams. 
\newblock {\it  Ann. Comb.} {\bf 2} (1998), 103–110.  

\bibitem{BesKle}
{\sc  C. Bessenrodt and A. Kleshchev,}
\newblock On Kronecker products of complex representations of the symmetric and alternating groups.
\newblock {\it  Pacific J. Math.} {\bf 190} (1999), 201--223.  

\bibitem{BrauerNakayama}
{\sc R. Brauer, }
\newblock On a conjecture by {N}akayama, 
\newblock Trans. Roy. Soc. Canada.
  Sect. III. (3) \textbf{41} (1947), 11--19. 

\bibitem{G}
{\sc  E. Giannelli,}
\newblock Characters of odd degree of symmetric groups. 
\newblock arXiv:1601.03839v2 [math.RT]

\bibitem{GKNT}
{\sc  E. Giannelli, A. Kleshchev, G. Navarro and P. H. Tiep, }
\newblock Restriction of odd degree characters and natural correspondences. \newblock To appear in Int. Math. Res. Not. 

\bibitem{brown} 
{\sc M.\,Isaacs,} 
\newblock Characters of solvable and symplectic groups. 
\newblock Amer. J. Math. {\bf 95} (1973), 594--635.

\bibitem{isaacs} 
{\sc M.\,Isaacs,}
\newblock {\em Character theory of finite groups}. 
\newblock {\em AMS Chelsea, 2006.}

\bibitem{INOT} 
{\sc M.\,Isaacs, G.\,Navarro, J. Olsson and P.H.\,Tiep,} 
\newblock Characters restriction and multiplicities in symmetric groups. 
\newblock To appear in J. Algebra. 

\bibitem{James}
{\sc  G.~D. James, }
\newblock \emph{The representation theory of the symmetric groups}, 
\newblock Lecture Notes in Mathematics, vol. 682, Springer, Berlin, 1978. 

\bibitem{JK}
{\sc  G.~James and A.~Kerber, }
\newblock \emph{The representation theory of the symmetric
  group}, 
  \newblock Encyclopedia of Mathematics and its Applications, vol.~16,
  Addison-Wesley Publishing Co., Reading, Mass., 1981. 
  
%

\bibitem{lewis}
{\sc M.\, L.\ Lewis, }
\newblock Characters, coprime actions, and operator groups,
\newblock Arch. Math. (Basel) {\bf 69} (1997), no. 6, 455--460.

\bibitem{McNa}
{\sc P. R. W. McNamara,}
\newblock Necessary conditions for Schur-positivity,
\newblock J. Algebraic Combin. {\bf 28} (2008), no. 4, 495–507.

\bibitem{MS}
{\sc  G. Malle, B. Sp\"ath,} 
\newblock Characters of odd degree, 
\newblock Ann. of Math. (2) 184 (2016), 869-908. 

\bibitem{nav}
{\sc G\,.Navarro,} 
\newblock {\em Characters and blocks of finite groups}. 
\newblock LMS Lecture Note Series, {\bf 250}. 
{CUP, Cambridge, 1998.}
  
\bibitem{OW}
{\sc T.\, Okuyama,
M.\,Wajima, }
\newblock Character correspondence and $p$-blocks of $p$-solvable groups,
\newblock Osaka J. Math. {\bf 17} $(1980)$, no. $3$, $801$--$806$.

\bibitem{Olsson}
{\sc  J. Olsson, }
\newblock McKay numbers and heights of characters, 
\newblock \emph{Math. Scand.} \textbf{38} (1976), no. 1, 25--42. 

\bibitem{OlssonBook}
{\sc  J. Olsson, }
\newblock \emph{Combinatorics and Representations of Finite Groups}, 
  \newblock Vorlesungen aus dem Facherbeich Mathematik der Universitat Essen, Heft 20, 1994.   

\bibitem{RobinsonNakayama}
 {\sc G.~de~B. Robinson, } 
 \newblock On a conjecture by {N}akayama, 
 \newblock Trans. Roy. Soc. Canada. Sect. III.  \textbf{41} (1947), 20--25. 

\bibitem{slattery} 
{\sc M.\,C.\,Slattery,} 
\newblock $\pi$-blocks of $\pi$-separable groups. II.
\newblock J. Algebra {\bf 124} (1989), no. 1, 236--269.

%
\bibitem{wolf} 
{\sc T.\,R.\,Wolf,} 
\newblock Character correspondences in solvable groups. 
\newblock Illinois J. Math. {\bf 22} (1978), no. 2, 327--340. 

\end{thebibliography}
\end{document}